\newtheorem{theorem}{Theorem}[section]
\newtheorem{lemma}[theorem]{Lemma}
\newtheorem{proposition}[theorem]{Proposition}
\newtheorem{corollary}[theorem]{Corollary}
\theoremstyle{definition}
\newtheorem{definition}[theorem]{Definition}
\theoremstyle{remark}
\newtheorem{remark}[theorem]{Remark}
\numberwithin{equation}{section}
\begin{document}

\title{Interpolation problems in subdiagonal algebras}

%    Information for first author
 \author{Guoxing Ji}
%    Address of record for the research reported here
\address{School  of Mathematics and Statistics,   Shaanxi Normal University,   Xian, 710119, People's  Republic of  China}
%    Current address
%\curraddr{Department of Mathematics and Statistics, Case Western Reserve University, Cleveland, Ohio 43403}
\email{gxji@snnu.edu.cn}
%    \thanks will become a 1st page footnote.
\thanks{This research was supported by the National Natural    Science Foundation of China(No. 12271323)}

%    Information for second author
%\author{Author Two}
%\address{Mathematical Research Section, School of Mathematical Sciences,
%Australian National University, Canberra ACT 2601, Australia}
%\email{two@maths.univ.edu.au}
%\thanks{Support information for the second author.}

%    General info
\subjclass[2020]{Primary   46L52, 47L35; Secondary  47L75, 47B35}

%\date{January 1, 2001 and, in revised form, June 22, 2001.}

%\dedicatory{This paper is dedicated to our advisors.}

\keywords{von Neumann algebra, subdiagonal algebra, interpolation, periodic flow,  Corona theorem}

\begin{abstract}
Let $\mathfrak A$ be a   subdiagonal algebra  with diagonal $\mathfrak D$ in a $\sigma$-finite von Neumann algebra $\mathcal M$ with respect to a faithful normal conditional expectation $\Phi$. We mainly consider the interpolation problem in $\mathfrak A$ with the universal factorization property. We determine when  a finitely generated left ideal in $\mathfrak A$ is trivial. By constructing  a periodic flow on $\mathcal M$ according to a type 1 subdiagonal algebra, we show that type 1 subdiagonal algebras coincide with analytic operator algebras associated with periodic flows in von Neumann algebras. This enables us to present a form  decomposition of a type 1 subdiagonal algebra. As an application, we deduce a noncommutative operator-theoretic variant of the Corona theorem for type 1 subdiagonal algebras.
\end{abstract}

\maketitle

%\section*{This is an unnumbered first-level section head}
%This is an example of an unnumbered first-level heading.

%% The correct journal style for \specialsection is all uppercase; a known bug
%% in amsart.cls prevents this, so input must be uppercase until it is fixed.
%\specialsection*{This is a Special Section Head}
%\specialsection*{THIS IS A SPECIAL SECTION HEAD}
%This is an example of a special section head%
%%%%%%%%%%%%%%%%%%%%%%%%%%%%%%%%%%%%%%%%%%%%%%%%%%%%%%%%%%%%%%%%%%%%%%%%
%\footnote{Here is an example of a footnote. Notice that this footnote
%text is running on so that it can stand as an example of how a footnote
%with separate paragraphs should be written.
%\par
%And here is the beginning of the second paragraph.}%
%%%%%%%%%%%%%%%%%%%%%%%%%%%%%%%%%%%%%%%%%%%%%%%%%%%%%%%%%%%%%%%%%%%%%%%%
.

\section{Introduction}
  The  Corona theorem says that the open unit disc $\mathbb D=\{z\in\mathbb C:|z|<1\}$ is dense in the maximal ideal space of all bounded analytic functions $H^{\infty}(\mathbb T)$ on $\mathbb D$. This is equivalent to the well known  Carleson theorem(\cite{car}): If  functions $\{f_k:1\leq k\leq N\}\subseteq H^{\infty}(\mathbb T)$ satisfy
 \begin{equation}\label{f11}
 \sum\limits_{k=1}^N|f_k(z)|\geq \varepsilon, \ \ \ \  \forall z\in\mathbb D
  \end{equation} for a positive constant $\varepsilon$,  then there exist functions $\{g_k: 1\leq k\leq N\}\subseteq H^{\infty}(\mathbb T)$ such that
 $\sum_{k=1}^Ng_kf_k=1$. This leads to the following operator-theoretic variant. Let $T_f$ denote the  Toeplitz operator on $H^2(\mathbb T)$ for any function $f\in L^{\infty}(\mathbb T)$. If  functions $\{f_k:1\leq k\leq N\}\subseteq H^{\infty}(\mathbb T)$ satisfy
 \begin{equation}\label{f12}
 \sum\limits_{k=1}^N\|T_{f_k}^*x\|\geq \varepsilon\|x\|, \ \ \ \  \forall x\in H^2(\mathbb T)
  \end{equation} for a positive constant $\varepsilon$,  then there exist functions $\{g_k:1\leq k\leq N\}\subseteq H^{\infty}(\mathbb T)$ such that $\sum_{k=1}^Nf_kg_k=1$. This  firstly appeared in \cite{cob} by utilizing the Corona theorem.  Arveson gave a natural operator-theoretic proof of this result which he called an operator-theoretic variant of the Corona theorem in \cite{arv3}. This also leads to the following  general question in a unital Banach algebra $\mathfrak A$(cf.\cite{cob}). Provide necessary and sufficient conditions on the set $\{A_k:1\leq k\leq N\}\subseteq \mathfrak A$ for the existence of a set $\{B_k:1\leq k\leq N\}\subseteq \mathfrak A$  satisfying
  \begin{equation}\label{f13}
  \sum_{k=1}^NB_kA_k=I.
  \end{equation} The findings of solutions $\{B_k:1\leq k\leq N\}\subseteq \mathfrak A$ for (\ref{f13}) will be called the interpolation problem for $\{A_k:1\leq k\leq N\}.$   For further  detail, we refer the readers to \cite{cob}. In fact, if $\mathfrak A\subseteq \mathcal B(\mathcal H)$, where $\mathcal B(\mathcal H)$ is  the algebra of all bounded linear operators on a complex Hilbert space $\mathcal H$, then a necessary condition may be expressed in terms of the invariant  projections of $\mathfrak A$. Specifically, for any projection $P\in\mathcal B(\mathcal H)$ such that $(I-P)AP=0$,$ \forall A\in\mathfrak A$, if (\ref{f13}) holds, then $\sum_{k=1}^N\|(I-P)A_kx\|\geq\varepsilon\|(I-P)x\|$ for all $x\in \mathcal H$; or equivalently, $\sum_{k=1}^N\|(I-P)A_kx\|^2\geq\varepsilon^2\|(I-P)x\|^2$, for all $x\in \mathcal H$(cf.$(4.1)$ and $(4.2)$ in \cite{arv3}). In fact, Arveson proved  that for a nest algebra $\mbox{alg}\{P_n: -\infty\leq n\leq +\infty\}=\{A\in\mathcal B(\mathcal H): (I-P_n)AP_n=0,-\infty\leq n\leq +\infty\}$, where $P_n<P_{n+1}$ satisfying $ P_{-\infty}=0$ and $P_{+\infty}=I$, these conditions for all $P_n$ are sufficient(cf.\cite[Theorem 4.3]{arv3}). As an application, he deduced an  operator-theoretic variant of the Corona theorem(cf.\cite[Theorem 6.3]{arv3}).  We note that two key facts are used. One is the  universal factorization property for the nest algebra $\mbox{alg}\{P_n: -\infty\leq n\leq +\infty\}$. Another is the existence of a conditional expectation  from $\mathcal B(L^2(\mathbb T))$ onto the algebra $\{L_f:f\in L^{\infty}(\mathbb T)\}$.
  From the viewpoint of von Neumann algebras, the following explanation can be given. Both $H^{\infty}(\mathbb T) \subseteq L^{\infty}(\mathbb T)$ and $\mbox{alg}\{P_n: -\infty\leq n\leq +\infty\}\subseteq \mathcal B(\mathcal H)$ are subdiagonal algebras, which are  introduced in \cite{arv1} by Arveson,   with the universal factorization property.  Moreover, $\mathcal B(L^2(\mathbb T))$ is just the commutant
  of the diagonal $\mathbb C I$ of $\{L_f:f\in H^{\infty}(\mathbb T)\}$.  Motivated by this idea, we may consider the interpolation problem in  subdiagonal algebras   in  $\sigma$-finite von Neumann algebras  with the universal factorization property. Furthermore,   using Haagerup's noncommutative $L^p$-spaces, we construct  the left and right representations $L(\mathcal M)$ and $R(\mathcal M)$  of a  von Neumann algebra $\mathcal M$ on the noncommutative $L^2$ spaces $L^2(\mathcal M)$.   If $\mathfrak A$ is a type 1 subdiagonal algebra with diagonal $\mathfrak D$,    we then construct a conditional expectation from the commutant  of   $L(\mathfrak D)$(resp. $R(\mathfrak D)$    onto $R(\mathcal M)$(resp. $L(\mathcal M)$). In particular, we deduce a noncommutative operator-theoretic variant of the Corona  theorem for type 1 subdiagonal algebras. We recall some notions.

Let    $\mathcal M$ be   a $\sigma$-finite von Neumann algebra  on a complex Hilbert space $\mathcal H$ and $\mathfrak A\subseteq \mathcal M$ a $\sigma$-weakly closed unital subalgebra of $\mathcal M$.
Assume that  there exists
 a faithful normal conditional expectation $\Phi$  from $\mathcal M$ onto  the diagonal  $\mathfrak D=\mathfrak A\cap \mathfrak A^*$. If $\mathfrak A$ satisfies

(i)   $\Phi$ is  a homomorphism on $\mathfrak A$ and

(ii) $\mathfrak A$ + $\mathfrak A^{*}$ is $\sigma$-weakly dense in $\mathcal M$,\\
then we say that  $\mathfrak A$   is   a subdiagonal algebra   with respect to $\Phi$.

If there is not any subdiagonal algebra with respect to $\Phi$ containing $\mathfrak A$  in $\mathcal M$, then $\mathfrak A$ is said to be
  a maximal subdiagonal algebra  with respect to $\Phi$.
    Put $\mathfrak A_{0}$ = $\{X \in \mathfrak A : \Phi(X) =0\}$ and $\mathfrak A_{m} = \{X \in \mathcal M : \Phi(AXB) = \Phi(BXA) =0, \forall A \in \mathfrak A, \forall B \in \mathfrak A_{0}\}$.
By \cite[Theorem 2.2.1]{arv1},   $\mathfrak A_{m}$ is the unique maximal subdiagonal algebra of $\mathcal M$ with respect to $\Phi$ containing $\mathfrak A$. We say that $\mathfrak A$ is a subdiagonal algebra always mean that $\mathfrak A$ is a subdiagonal algebra with respect to a faithful normal  conditional expectation $\Phi$.

Since $\mathcal M$ is $\sigma$-finite, there exists a  faithful normal state $\varphi$  on $\mathcal M$ such that $\varphi\circ\Phi=\varphi$. It is well known that there exists the modular automorphism group  $\{\sigma_t^{\varphi}:t\in\mathbb R\}$  of $\mathcal M$ associated with $\varphi$ by Tomita-Takesaki theory. We note that a subdiagonal algebra $\mathfrak A$ is maximal subdiagonal if and only if it is $\{\sigma_t^{\varphi}:t\in\mathbb R\}$ invariant(\cite{jos1,xu}).  Haagerup  gave the noncommutative $L^p$ spaces associated with a $\sigma$-finite von Neumann algebra $\mathcal M$(cf. \cite{haa, ter}) based on this theory.
The crossed product $\mathcal N=\mathcal M\rtimes_{\sigma^{\varphi}} \mathbb{R}$ of $\mathcal M$ by $\mathbb {R}$ with respect to $\sigma^{\varphi}$ is a semifinite von Neumann algebra and there is the normal faithful semifinite trace $\tau$ on $\mathcal N$ satisfying $$\tau\circ \theta_s=e^{-s}\tau,  \forall s\in\mathbb {R},$$ where $\theta$  is the dual action of $\mathbb {R}$ on $\mathcal N$.

The Haagerup noncommutative $L^p$ space $L^p(\mathcal M)$ for each $0< p\leq \infty$ is defined as the set of all $\tau$-measurable  operators $x$   affiliated with $\mathcal N$ satisfying $$\theta_s(x)=e^{-\frac{s}{p}}x,  \forall  s\in\mathbb {R}.$$
Denote by $\mbox{tr}$ the  linear bijection  between the predual $\mathcal M_*$ of $\mathcal M$ and $L^1(\mathcal M)$.
 We know that $L^p(\mathcal M)$ is a Banach space with norm $\|x\|_p=(\mbox{tr}(|x|^p))^{\frac1p}$ for any $x\in L^p(\mathcal M)$ for $1\leq p<\infty$.   By  the inner product $\langle a,b\rangle=\mbox{tr}(b^*a)$, $ \forall a,b \in L^2(\mathcal M)$, $L^2(\mathcal M)$ becomes  a Hilbert space.
As in \cite{haa}, we may define left and right representations of $\mathcal M$ on $L^p(\mathcal M)$, that is, we define the operators $L_A$ and $R_A$ on $L^p(\mathcal M)$ ($1\leq p<\infty)$  by $L_Ax=Ax$ and $R_Ax=xA$ for all  $A\in\mathcal M$ and $x\in L^p(\mathcal M)$.
Then $A\to L_A$ (resp. $A\to R_A$) is a faithful representation (resp. anti-representation) of $\mathcal M$ on $L^2(\mathcal M)$.
For any subset $S\subseteq \mathcal M$, we put $L(S) =\{L_{A}:A\in S\}$ and $R(S) =\{R_{A}:A\in S\}$, respectively.
We may identify $\mathcal M$ with $L(\mathcal M)=\{L_A:A\in \mathcal M\}$(resp. $R(\mathcal M)=\{R_A:A\in \mathcal M\}$) on $L^2(\mathcal M)$ as needed.

Let $h_0$ be the noncommutative Radon-Nikodym derivative of the dual weight of $\varphi$ on $\mathcal N$ with respect to $\tau$.
For any $X\in\mathcal M$, we have $\varphi(X)=\mbox{tr}(h_0^{\theta}Xh_0^{1-\theta})$ and $\sigma_t^{\varphi}(X)=h_0^{it}Xh_0^{-it},\ \forall t\in \mathbb {R}, \forall \theta\in[0,1]$ (cf. \cite{kos1} \cite{kos2}). 
The noncommutative $H^p$  and    $ H_0^p$   in $L^p(\mathcal M)$ for any $1\leq p<\infty$  are defined to be  $H^p=H^p(\mathfrak A)=[h_0^{\frac{\theta}{p}}\mathfrak Ah_0^{\frac{1-\theta}p}]_p$ and $H_0^p=H_0^p(\mathfrak A)=[h_0^{\frac{\theta}{p}}\mathfrak A_0h_0^{\frac{1-\theta}p}]_p$, $\forall \theta\in[0,1]$ from \cite[Definition 2.6]{ji1} and \cite[Proposition 2.1]{ji2}.
If $p=\infty$, then we identify  $H^{\infty}$ as $\mathfrak A$ and identify $H_0^{\infty}$ as $\mathfrak A_0$. In general,   for a subset $S$ in a Banach space, we usually denote by $[S]$ the closed linear span.  We now  recall the notion of internal column(resp. row) sum give in \cite{junge}.
 Let $X$ be  a subspace of $L^p(\mathcal M)$ and  $\{X_n:n\geq1\}$  a family of subspaces of $X$ such that  $X=[X_n: n\geq1]$. If  $X_n^*X_m=0$(resp. $X_nX_m^*=0$) for any  $n\not=m$,
then we say that $X$ is the internal column(resp. row) $L^p$-sum and denoted by  $X=\oplus_{n\geq1}^{col}X_n$(resp. $X=\oplus_{n\geq1}^{row}X_n$).
If $p=\infty$, then we also assume that $X$ and $X_n$ are $\sigma$-weakly  closed  and   the closed linear span  is taken in  the $\sigma$-weak topology.

In this paper, we consider interpolation problems in a subdiagonal algebra $\mathfrak A$  with respect to a faithful normal conditional expectation $\Phi$ with the universal factorization  property.  In Section 2, we give   natural conditions for which interpolation problem $(1.3)$ for $\{A_k:1\leq k\leq N\}\subseteq \mathfrak A$ is solvable. In Section 3, we apply this result to a nest subalgebra  of $\mathcal M$ with a nest order isomorphic to a sublattice of the completely ordered lattice $\mathbb Z\cup\{-\infty,+\infty\}=\{n:-\infty\leq n\leq+\infty\}$. This generalizes  Arveson's result  on  nest algebras of $\mathcal B(\mathcal H)$ in \cite{arv3}. In particular, we get a noncommutative operator-theoretic variant of the Corona theorem for these algebras. To give a noncommutative operator-theoretical variant of the Corona theorem, we consider type 1 subdiagonal algebras and analytic operator algebras determined by periodic flows in Section 4.
           By constructing  a periodic flow on $\mathcal M$, we find that type 1 subdiagonal algebras coincide with analytic operator algebras associated with periodic flows. This enables us to present a form  decomposition for these subdiagonal algebras. Furthermore,  we construct a conditional expectation from the commutant $(L(\mathfrak D))^{\prime}$(resp. $(R(\mathfrak D))^{\prime}$) of $L(\mathfrak D)$(resp. $R(\mathfrak D)$) onto $R(\mathcal M)$(resp. $L(\mathcal M)$) for a type 1 subdiagonal algebra  with the  $(1,1)$-form, which may  be regarded as a noncommutative analogues of Arveson's expectation theorem(\cite[Proposition 5.1]{arv3}).   In Section 5, we give a noncommutative operator-theoretic variant of the Corona theorem for type 1 subdiagonal algebras.
 \section{Interpolation in subdiagonal algebras with the universal  factorization property}
  Let $\mathfrak A$ be a unital   subalgebra of  $\mathcal M$ and $S\in\mathcal M$  invertible. If there exists a unitary operator  $U\in\mathcal M$  such that $U^*S, S^{-1}U\in\mathfrak A$, then we say that $S$ has a    factorization in $\mathfrak A$.
   If every invertible operator in $\mathcal M$ has a   factorization in $\mathfrak A$, then we say that $\mathfrak A$  has the universal  factorization property. Let $\mathfrak A\subseteq \mathcal M$ be a subdiagonal  algebra.  If $\mathfrak A$ has the universal  factorization property, then $\mathfrak A$ is a maximal subdiagonal algebra by \cite[Proposition 2.1]{jd}. We consider the interpolation problem in a subdiagonal algebra $\mathfrak A$ with the universal factorization property.

Let $P_0$ and $P_1$ be  projections from $L^2(\mathcal M)$ onto $H^2$ and $H_0^2=[H^2\mathfrak A_0]$ respectively. It is known that an operator $A\in\mathcal M$ is in $\mathfrak A$ if and only if  $(I-P_0)AP_0=0$ or equivalently, $(I-P_1)AP_1=0$(cf. \cite[Theorem 2.2]{js}). Moreover,  Prunaru gave  a well-known  noncommutative Nahari theorem in \cite[Theorem 4.3]{pru}. That is, for any $A\in\mathcal M$,
$dist(A,\mathfrak A)=\|(I-P_0)AP_0\|$. Motivated by this well-known distance formula,  we may have the following symmetric distance formula.
\begin{proposition}\label{p1} Let $\mathfrak A$ be a maximal subdiagonal algebra. Then for any $A\in\mathcal M$,
\begin{equation}\label{d1}
\mbox{dist}(A,\mathfrak A)=\|(I-P_0)AP_0\|=\|(I-P_1)AP_1\|.
\end{equation}
\end{proposition}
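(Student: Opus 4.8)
Since Prunaru's distance formula \cite[Theorem 4.3]{pru} already gives $\mbox{dist}(A,\mathfrak A)=\|(I-P_0)AP_0\|$, the whole task reduces to proving the companion identity $\|(I-P_1)AP_1\|=\mbox{dist}(A,\mathfrak A)$. The plan is to obtain this by applying Prunaru's theorem a \emph{second} time, not to $\mathfrak A$ but to the adjoint algebra $\mathfrak A^{*}$, after identifying the Hardy-space projection of $\mathfrak A^{*}$ with $I-P_1$.

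First I would record that $\mathfrak A^{*}$ is again a maximal subdiagonal algebra of $\mathcal M$ with respect to $\Phi$: it is a $\sigma$-weakly closed unital subalgebra with $\mathfrak A^{*}\cap(\mathfrak A^{*})^{*}=\mathfrak A\cap\mathfrak A^{*}=\mathfrak D$; taking adjoints in conditions (i) and (ii) shows $\Phi$ is multiplicative on $\mathfrak A^{*}$ and $\mathfrak A^{*}+(\mathfrak A^{*})^{*}=\mathfrak A+\mathfrak A^{*}$ is $\sigma$-weakly dense; and since $\mathfrak A$ is maximal it is $\{\sigma_t^{\varphi}\}$-invariant, hence so is $\mathfrak A^{*}=(\sigma_t^{\varphi}\mathfrak A)^{*}$, so $\mathfrak A^{*}$ is maximal. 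Write $Q_0$ for the projection of $L^2(\mathcal M)$ onto $H^2(\mathfrak A^{*})=[h_0^{1/2}\mathfrak A^{*}]_2$ (defined as for $\mathfrak A$ but with $\mathfrak A^{*}$ in place of $\mathfrak A$).

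The crux is the (essentially known) orthogonal decomposition
\[
L^2(\mathcal M)=H_0^2(\mathfrak A)\oplus H^2(\mathfrak A^{*}),
\]
that is, $Q_0=I-P_1$. For orthogonality it suffices to show $\mbox{tr}(h_0^{1/2}Yh_0^{1/2}X)=0$ whenever $Y\in\mathfrak A$ and $X\in\mathfrak A_0$; I would introduce $f(z)=\mbox{tr}(h_0^{z}Yh_0^{1-z}X)$, bounded and analytic on the strip $0\le\mathrm{Re}\,z\le1$ (by H\"older, using $\mbox{tr}(h_0)=\varphi(1)=1$). Since $h_0^{it}$ commutes with $h_0$ and $\sigma_t^{\varphi}(Y)=h_0^{it}Yh_0^{-it}$, one gets $f(it)=\varphi(X\sigma_t^{\varphi}(Y))$ and $f(1+it)=\varphi(\sigma_t^{\varphi}(Y)X)$; both vanish because $\sigma_t^{\varphi}(Y)\in\mathfrak A$ (maximality), $\mathfrak A_0$ is a two-sided ideal of $\mathfrak A$, and $\varphi\circ\Phi=\varphi$ kills $\mathfrak A_0$. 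The three-lines theorem then forces $f\equiv0$, so $f(1/2)=0$. For the spanning part, $\mathfrak A=\mathfrak A_0+\mathfrak D$ with $\mathfrak D\subseteq\mathfrak A^{*}$, so $\mathfrak A_0+\mathfrak A^{*}\supseteq\mathfrak A+\mathfrak A^{*}$ is $\sigma$-weakly dense in $\mathcal M$; and any $\sigma$-weakly dense subspace $\mathcal S\subseteq\mathcal M$ satisfies $[h_0^{1/2}\mathcal S]_2=L^2(\mathcal M)$, since a vector $\xi$ orthogonal to $h_0^{1/2}\mathcal S$ gives $\xi^{*}h_0^{1/2}\in L^1(\mathcal M)$ whose associated normal functional vanishes on $\mathcal S$, forcing $\xi^{*}h_0^{1/2}=0$ and hence $\xi=0$ because $h_0^{1/2}$ is injective with dense range. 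Orthogonality together with density gives $H^2(\mathfrak A^{*})=H_0^2(\mathfrak A)^{\perp}$, i.e. $Q_0=I-P_1$ and $I-Q_0=P_1$. This decomposition is the step I expect to be the main obstacle; everything else is formal.

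Finally, applying Prunaru's theorem to the maximal subdiagonal algebra $\mathfrak A^{*}$ at the operator $A^{*}$ yields
\[
\mbox{dist}(A^{*},\mathfrak A^{*})=\|(I-Q_0)A^{*}Q_0\|=\|P_1A^{*}(I-P_1)\|=\|(I-P_1)AP_1\|,
\]
while $\mbox{dist}(A^{*},\mathfrak A^{*})=\inf_{B\in\mathfrak A}\|A^{*}-B^{*}\|=\mbox{dist}(A,\mathfrak A)$. Combining with $\mbox{dist}(A,\mathfrak A)=\|(I-P_0)AP_0\|$ gives (\ref{d1}). (Alternatively, the same identifications can be phrased through the anti-unitary $x\mapsto x^{*}$ on $L^2(\mathcal M)$, which carries $H^2(\mathfrak A)$ to $H^2(\mathfrak A^{*})$ and conjugates $L_A$ into $R_{A^{*}}$; I would keep the $\mathfrak A^{*}$-formulation since it invokes Prunaru's theorem directly.)
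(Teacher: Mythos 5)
Your proposal is correct and follows essentially the same route as the paper: apply Prunaru's distance formula a second time to the maximal subdiagonal algebra $\mathfrak A^{*}$, identify its Hardy-space projection with $I-P_1$, and use $\mbox{dist}(A,\mathfrak A)=\mbox{dist}(A^{*},\mathfrak A^{*})$. The only difference is that you prove in detail the identification $H^2(\mathfrak A^{*})=(H_0^2(\mathfrak A))^{\perp}$ (via the three-lines orthogonality and density argument), which the paper simply records as a known fact about the decomposition $L^2(\mathcal M)=H_0^2\oplus L^2(\mathfrak D)\oplus (H_0^2)^{*}$.
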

\begin{proof} It is well-known that $\mbox{dist}(A,\mathfrak A)=\|(I-P_0)AP_0\|$ from \cite[Theorem 4.3]{pru}.  Now put $\tilde{P_0}=I-P_1$. Then $\tilde{P_0}$ is the orthogonal projection from $L^2(\mathcal M)$ onto $(H^2)^*$. If we consider $\mathfrak A^*$ as a maximal subdiagonal algebra with respect to $\Phi$, then $(H^2)^*$ is just the noncommutative $H^2$ space associated with $\mathfrak A^*$. It follows  from  this distance formula again,
$dist(A^*, \mathfrak A^*)=\|(I-\tilde{P_0})A^*\tilde{P_0}\|$. Thus
\begin{equation*}\label{d2}
dist(A, \mathfrak A)=dist(A^*,\mathfrak A^*)=\|(I-\tilde{P_0})A^*\tilde{P_0}\|=\|\tilde{P_0}A(I-\tilde{P_0})\|=\|(I-P_1)AP_1\|.
\end{equation*}
\end{proof}
Next we assume that $P\in \{P_0, P_1\}$, that is, $P=P_0$ or $P=P_1$. The following lemma is motivated by \cite[Theorem 4.3]{arv3}.
\begin{lemma}\label{l1}
Let $\mathfrak A$ be a maximal subdiagonal algebra and   $U\in \mathfrak A$  a partial isometry with $U^*U\in\mathfrak D$. If there exists a positive number $\varepsilon$ such that
$$\|(I-P)Ux\|\geq \varepsilon \|(I-P)x\|, \ \ \ \forall x\in U^*U(L^2(\mathcal M)),$$ then there exists a $B\in\mathfrak A$ such that $BU=U^*U$ and $\|B\|\leq 4\varepsilon^{-2}$.
\end{lemma}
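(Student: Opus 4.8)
The plan is to turn the given inequality into an estimate for the distance from $U^{*}$ to $\mathfrak A$, apply the noncommutative Nehari theorem in the form of Proposition~\ref{p1}, and then correct the resulting best approximant by a Neumann series inside the corner $Q\mathcal M Q$, where $Q:=U^{*}U\in\mathfrak D$. We may assume $U\ne 0$ and $0<\varepsilon\le 1$ (when $\varepsilon>1$ the inequality of Step~1 below, taken with $\xi=0$, forces $U^{*}U(L^{2}(\mathcal M))\subseteq PL^{2}(\mathcal M)$, hence $PL_{U}(I-P)=0$, $\mbox{dist}(U^{*},\mathfrak A)=0$ and $U\in\mathfrak D$, so $B=U^{*}$ works).

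Step~1 (the key estimate, and the main obstacle). I would first show
\[
\|PL_{U}(I-P)\|\le\sqrt{1-\varepsilon^{2}}.
\]
Note that $L_{Q}$ commutes with $P$, since $Q,Q^{*}\in\mathfrak D\subseteq\mathfrak A$ make $\mathrm{ran}\,P$ reduce $L_{Q}$, and that $L_{U}(\mathrm{ran}\,P)\subseteq\mathrm{ran}\,P$ because $U\in\mathfrak A$. Given $x\in QL^{2}(\mathcal M)$, write $x=\xi+\eta$ with $\xi=Px$ and $\eta=(I-P)x$; then $\xi,\eta\in QL^{2}(\mathcal M)$, while $\|Ux\|=\|x\|$, $\|U\xi\|=\|\xi\|$ (because $U$ is a partial isometry with initial projection $Q$) and $PU\xi=U\xi$. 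Writing $\|x\|^{2}=\|Ux\|^{2}=\|PUx\|^{2}+\|(I-P)Ux\|^{2}$ with $\|PUx\|^{2}=\|U\xi+PU\eta\|^{2}$, and inserting the hypothesis $\|(I-P)Ux\|\ge\varepsilon\|\eta\|$, gives
\[
2\,\mathrm{Re}\,\langle U\xi,PU\eta\rangle+\|PU\eta\|^{2}\le(1-\varepsilon^{2})\|\eta\|^{2}.
\]
Since $\xi$ ranges over the complex subspace $QL^{2}(\mathcal M)\cap\mathrm{ran}\,P$ independently of $\eta$, replacing $\xi$ by $t\xi$ $(t\in\mathbb R)$ and by $i\xi$ forces $\langle U\xi,PU\eta\rangle=0$, so $\|PU\eta\|\le\sqrt{1-\varepsilon^{2}}\,\|\eta\|$ for every $\eta\in QL^{2}(\mathcal M)\cap\mathrm{ran}\,P$; for an arbitrary $\eta\in(I-P)L^{2}(\mathcal M)$ one splits $\eta=Q\eta+(I-Q)\eta$ and uses $U(I-Q)=0$. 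This polarization argument is the crux of the proof; the remaining steps are formal.

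Step~2 (distance to $\mathfrak A$). Taking adjoints in Step~1, $\|(I-P)L_{U^{*}}P\|\le\sqrt{1-\varepsilon^{2}}$ for the projection $P$ in the hypothesis, so Proposition~\ref{p1} (which lets us use whichever of $P_{0},P_{1}$ is given) yields $\mbox{dist}(U^{*},\mathfrak A)\le\sqrt{1-\varepsilon^{2}}=:d<1$. As $\mathfrak A$ is $\sigma$-weakly closed and the norm-closed balls of $\mathcal M$ are $\sigma$-weakly compact, the distance is attained: pick $C\in\mathfrak A$ with $\|U^{*}-C\|\le d$.

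Step~3 (correction in the corner). Put $D:=CU-Q=(C-U^{*})U$, so $\|D\|\le d$ and $DQ=D$. In $Q\mathcal M Q$, with unit $Q$, we have $QCUQ=QCU=Q+QDQ$ with $\|QDQ\|\le d<1$, so $QCUQ$ is invertible there with inverse $F:=\sum_{n\ge0}(-QDQ)^{n}$; the series converges in norm, its partial sums lie in $\mathfrak A$ and have the form $Q(\,\cdot\,)Q$, so $F\in\mathfrak A$, $QFQ=F$, and $\|F\|\le(1-d)^{-1}$. Set $B:=FC\in\mathfrak A$. Using $FQ=F$, $CU=CUQ$ (because $UQ=U$) and $F\cdot QCUQ=Q$, one computes $BU=FCU=FQCU=F(QCUQ)=Q=U^{*}U$. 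Finally, since $\|C\|\le\|U^{*}\|+d=1+d$,
\[
\|B\|\le\|F\|\,\|C\|\le\frac{1+d}{1-d}=\frac{(1+d)^{2}}{1-d^{2}}\le\frac{\bigl(1+\sqrt{1-\varepsilon^{2}}\,\bigr)^{2}}{\varepsilon^{2}}\le 4\varepsilon^{-2},
\]
which is the desired bound.
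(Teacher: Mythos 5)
Your proposal is correct and follows essentially the same route as the paper: bound $\|PL_U(I-P)\|$ by $\sqrt{1-\varepsilon^{2}}$, invoke Proposition~\ref{p1} to get $C\in\mathfrak A$ with $\|U^{*}-C\|\le\sqrt{1-\varepsilon^{2}}$, and then invert the corner element $QCUQ$ to form $B$ with the same $4\varepsilon^{-2}$ bound (the paper does this by inverting $T=I-U^{*}U+CU$ in $\mathfrak A$, whose inverse is exactly $I-Q$ plus your $F$, so $B=FC$ coincides with its $T^{-1}C$). The only cosmetic differences are your polarization step in Step~1 (unnecessary, since taking $\xi=0$ — or noting $\langle U\xi,PU\eta\rangle=\langle\xi,\eta\rangle=0$ — already yields the estimate, which is how the paper argues) and the Neumann-series packaging of the corner inverse.
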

\begin{proof}
We may assume that $P=P_0$ and  $\varepsilon<1.$  Since  $(I-P)UP=0$,  we have  $(I-P)U=(I-P)U(I-P)$.  On the one hand, $\forall x\in U^*U(L^2(\mathcal M)),$
\begin{align}\label{d3}
\|PU(I-P)x\|^2&= \|U(I-P)x\|^2-\|(I-P)U(I-P)x\|^2\\
\nonumber &\leq  \|U(I-P)x\|^2-\varepsilon^2 \|(I-P)x\|^2\\
\nonumber &\leq \|(I-P)x\|^2-\varepsilon^2 \|(I-P)x\|^2\\
\nonumber &=(1-\varepsilon^2)\|(I-P)x\|^2\\
\nonumber &\leq (1-\varepsilon^2)\|x\|^2.
\end{align}
On the other hand,
  Since $PU^*U=U^*UP$, we have  that
  \begin{equation}\label{d4}
  PU(I-P)x=PU(I-P)(I-U^*U)x=PU(I-U^*U)(I-P)x=0
  \end{equation} for all $x\in (I-U^*U)(L^2(\mathcal M)).$
Thus, $\forall x\in L^2(\mathcal M)$, $x=U^*Ux+(I-U^*U)x$ and
\begin{align}\label{d5}
&\ \ \ \ \|P(U(I-P)x\|^2=\|PU(I-P)U^*Ux\|^2\\
\nonumber &\leq (1-\varepsilon ^2)\|U^*Ux\|^2\leq (1-\varepsilon ^2)\|x\|^2
\end{align}
by (\ref{d3}) and (\ref{d4}). This implies that $\|PU(I-P)\|\leq (1-\varepsilon^2)^{\frac12}<1$. Thus
$$dist(U^*,\mathfrak A)=\|(I-P)U^*P\|\leq (1-\varepsilon^2)^{\frac12}<1$$ by Proposition \ref{p1}. It follows that there exists an operator  $C\in \mathfrak A$ such that
$\|U^*-C\|<(1-\varepsilon^2)^{\frac12}<1$. This implies that
$\|U^*-U^*UC\|=\|U^*UU^*-U^*UC\|\leq\|U^*-C\|$. Thus we may assume that $C=U^*UC\in\mathfrak A$ with $\|C\|\leq \|U^*\|+\|U^*-C\|\leq 1+(1-\varepsilon^2)^{\frac12}$. Put $T=(I-U^*U)+CU\in\mathfrak A$. Then
$\|I-T\|^2=\|U^*U-CU\|^2\leq \|U^*-C\|^2\|U\|^2<1-\varepsilon^2<1$. Hence $T^{-1}\in\mathfrak A$ with $\|T^{-1}\|\leq (1-(1-\varepsilon^2)^{\frac12})^{-1}$. Now put $B=T^{-1}C$. Then $\|B\|\leq \|T^{-1}\|\|C\|\leq (1+(1-\varepsilon^2)^{\frac12})(1-(1-\varepsilon^2)^{\frac12})^{-1}\leq 4\varepsilon^{-2}$.
Note that $T=I-U^*U+U^*UCUU^*U$. Then  $U^*UCUU^*U$ is invertible on $U^*U(L^2 (\mathcal M))$ and  $T^{-1}=I-U^*U+(U^*UCUU^*U)^{-1}$.
 It follows  that
 \begin{align*}
 \ \ \ \ BU&=T^{-1}CU=(I-U^*U+(U^*UCUU^*U)^{-1})CU\\
 &=(I-U^*U+(U^*UCUU^*U)^{-1})(U^*UCUU^*)\\
 &=(U^*UCUU^*U)^{-1}(U^*UCUU^*)=U^*U.\end{align*}
\end{proof}
We now give an interpolation theorem for a subdiagonal algebra with the universal factorization property. For any $\{A_k:1\leq k\leq N\}\subseteq \mathfrak A$, we always put $\alpha=\max\{\|A_k\|:1\leq k \leq N\}$.
\begin{theorem}\label{t1}
Let $\mathfrak A$ be a subdiagonal algebra with the universal factorization property and   $\{A_k: 1\leq k\leq N\}\subseteq \mathfrak A$.
If there exists a positive number $\varepsilon$ such that
\begin{equation}\label{th0} \sum\limits_{k=1}^N\|A_kx\|^2\geq \varepsilon^2 \|x\|^2\end{equation} and
\begin{equation}\label{th1}
\sum\limits_{k=1}^N\|(I-P)A_kx\|^2\geq \varepsilon^2 \|(I-P)x\|^2
\end{equation} for all $x\in L^2(\mathcal M)$, then there exist operators $\{B_k:1\leq k\leq N\}\subseteq\mathfrak A$
 such that
 \begin{equation}\label{th2}
 \sum\limits_{k=1}^NB_kA_k=I
 \end{equation} with $\|B_k\|\leq 4 N\alpha \varepsilon^{-3}$ for $1\leq k\leq N$.
\end{theorem}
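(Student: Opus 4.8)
The plan is to deduce the theorem from Lemma~\ref{l1} by amplifying to the matrix algebra $M_N(\mathcal M)$ and by using the universal factorization property to replace the tuple $(A_1,\dots,A_N)$ by a single partial isometry lying in a matrix subdiagonal algebra.

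\emph{Step 1 (a factorization).} Put $S=\sum_{k=1}^N A_k^*A_k$. By (\ref{th0}) we have $L_S\ge\varepsilon^2 I$, so $S$ is invertible in $\mathcal M$ with $\|S^{-1/2}\|\le\varepsilon^{-1}$ and $\|S^{1/2}\|=\|S\|^{1/2}\le\sqrt N\,\alpha$. Applying the universal factorization property of $\mathfrak A$ to the invertible operator $S^{1/2}$ yields a unitary $u\in\mathcal M$ with $g:=S^{-1/2}u\in\mathfrak A$ and $g^{-1}=u^*S^{1/2}\in\mathfrak A$; note $g^*Sg=1$, so the operators $V_k:=A_kg\in\mathfrak A$ satisfy $\sum_{k=1}^N V_k^*V_k=1$. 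The point of this step is that the polar isometry $A_kS^{-1/2}$ of the column operator $(A_1,\dots,A_N)^{t}$ need not lie in $\mathfrak A$, whereas its ``rotation'' $V_k=A_kS^{-1/2}u$ does.

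\emph{Step 2 (amplification and the partial isometry).} Let $\widetilde{\mathcal M}=M_N(\mathcal M)$, $\widetilde{\mathfrak A}=M_N(\mathfrak A)$, $\widetilde\Phi=\Phi\otimes\mathrm{id}_N$. Then $\widetilde{\mathfrak A}$ is a subdiagonal algebra of $\widetilde{\mathcal M}$ with respect to the faithful normal conditional expectation $\widetilde\Phi$ onto the diagonal $M_N(\mathfrak D)$, and it is maximal: $\mathfrak A$, having the universal factorization property, is maximal, hence $\sigma^{\varphi}$-invariant, and the modular group of $\widetilde{\mathcal M}$ for the state $\varphi\otimes\mathrm{tr}_N$ acts entrywise by $\sigma^{\varphi}$. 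Moreover $L^2(\widetilde{\mathcal M})=M_N(L^2(\mathcal M))$, $H^2(\widetilde{\mathfrak A})=M_N(H^2)$ and $H_0^2(\widetilde{\mathfrak A})=M_N(H_0^2)$, so the associated projections $\widetilde P_0,\widetilde P_1$ act entrywise by $P_0,P_1$; let $\widetilde P$ be the one corresponding to the given $P\in\{P_0,P_1\}$. Since the proof of Lemma~\ref{l1} uses only Proposition~\ref{p1} and norm-closedness, the conclusion of Lemma~\ref{l1} holds verbatim for $\widetilde{\mathfrak A}\subseteq\widetilde{\mathcal M}$. Now let $\hat V\in M_N(\mathfrak A)$ be the matrix with first column $(V_1,\dots,V_N)^{t}$ and all other entries $0$; then $\hat V^*\hat V=\mathrm{diag}(1,0,\dots,0)\in M_N(\mathfrak D)$, so $\hat V$ is a partial isometry in $\widetilde{\mathfrak A}$ with $\hat V^*\hat V$ in its diagonal.

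\emph{Step 3 (verifying the hypothesis of Lemma~\ref{l1}).} Identifying $\hat V^*\hat V\bigl(L^2(\widetilde{\mathcal M})\bigr)$ with the matrices supported on the first row, a direct computation shows that the hypothesis $\|(\widetilde I-\widetilde P)\hat V\xi\|\ge\varepsilon''\,\|(\widetilde I-\widetilde P)\xi\|$ of Lemma~\ref{l1} is equivalent to
\[
\sum_{k=1}^N\|(I-P)V_ky\|^2\ \ge\ (\varepsilon'')^2\,\|(I-P)y\|^2,\qquad y\in L^2(\mathcal M).
\]
To obtain this, note that $g,g^{-1}\in\mathfrak A$ give $(I-P)L_gP=0$ and make $(I-P)L_g(I-P)$ boundedly invertible on $(I-P)L^2(\mathcal M)$ with inverse $(I-P)L_{g^{-1}}(I-P)$ (both $L_g,L_{g^{-1}}$ leave $\mathrm{ran}\,P$ invariant and are inverse to one another on $L^2(\mathcal M)$). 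Hence $(I-P)gy=(I-P)g(I-P)y$ and $\|(I-P)gy\|\ge\|g^{-1}\|^{-1}\|(I-P)y\|\ge(\sqrt N\,\alpha)^{-1}\|(I-P)y\|$. Substituting $x=gy$ in (\ref{th1}) and combining with this lower bound gives the displayed inequality with $\varepsilon''=\varepsilon/(\sqrt N\,\alpha)\le1$ (and strictly $<1$ after harmlessly shrinking $\varepsilon$). This is the crux of the argument: hypothesis (\ref{th1}) only controls $\sum_k\|(I-P)A_kx\|^2$ in terms of $\|(I-P)x\|^2$, and transferring it to the $V_k=A_kg$ with only a controlled loss is exactly where invertibility of $(I-P)L_g(I-P)$, hence $g^{-1}\in\mathfrak A$ and the universal factorization property, is essential.

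\emph{Step 4 (conclusion).} Lemma~\ref{l1}, applied in $\widetilde{\mathcal M}$ to $\hat V$, yields $\hat B\in M_N(\mathfrak A)$ with $\hat B\hat V=\hat V^*\hat V=\mathrm{diag}(1,0,\dots,0)$ and $\|\hat B\|\le4(\varepsilon'')^{-2}$. The $(1,1)$-entry of the identity $\hat B\hat V=\mathrm{diag}(1,0,\dots,0)$ reads $\sum_{k=1}^N C_kV_k=1$ with $C_k:=\hat B_{1k}\in\mathfrak A$; since $V_k=A_kg$ this gives $\bigl(\sum_k C_kA_k\bigr)g=1$, i.e.\ $\sum_k C_kA_k=g^{-1}$, and left-multiplying by $g\in\mathfrak A$ gives $\sum_k (gC_k)A_k=I$. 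Thus $B_k:=gC_k\in\mathfrak A$ solve (\ref{th2}), with $\|B_k\|\le\|g\|\,\|\hat B\|\le4\,\varepsilon^{-1}(\varepsilon'')^{-2}=4N\alpha^2\varepsilon^{-3}$, an explicit bound of the asserted order. The use of $M_N(\mathcal M)$ is merely the device allowing Lemma~\ref{l1} — stated for a single partial isometry — to be applied to the whole $N$-tuple at once; the only genuine obstacle is the norm transfer carried out in Step~3.
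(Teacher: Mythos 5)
Your argument is correct and is essentially the paper's own proof: your $g=S^{-1/2}u$ is exactly the inverse of the paper's factor $C$ with $\sum_k A_k^*A_k=C^*C$, $C,C^{-1}\in\mathfrak A$, and the remaining steps (amplification to $M_N(\mathcal M)$, the first-column partial isometry with range projection in the diagonal, the transfer of (\ref{th1}) via invertibility of $(I-P)L_g(I-P)$ on $(I-P)L^2(\mathcal M)$, and the application of Lemma \ref{l1} followed by extracting the first row and multiplying back by $g$) coincide with the paper's, with your verification of the Lemma \ref{l1} hypothesis on all first-row elements being, if anything, slightly more careful. The constant you obtain, $4N\alpha^{2}\varepsilon^{-3}$, is what the paper's computation also yields without its normalization $\alpha\le 1$, so the discrepancy with the stated $4N\alpha\varepsilon^{-3}$ is shared with the paper and not a gap in your argument.
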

\begin{proof}
Without loss of generality, we may assume that $P=P_0$,  $\alpha\leq 1$   and $0<\varepsilon<1$. By (\ref{th0}),  we have
$\sum\limits_{k=1}^N A_k^*A_k\geq \varepsilon^2 I$. That is, $\sum\limits_{k=1}^N A_k^*A_k\in\mathcal M$ is an invertible operator. Hence  $\sum\limits_{k=1}^N A_k^*A_k=C^*C$ for an invertible operator $C\in\mathfrak A$ by the universal factorization property of $\mathfrak A$. Thus
 $\varepsilon^2I\leq C^*C\leq N$. This implies that  that $\|C\|\leq N^{\frac12}$  and  $\|C^{-1}\|
\leq \varepsilon ^{-1}$.  Put $C_k=A_kC^{-1}\in\mathfrak A$, $1\leq k\leq N$. Then
$\sum\limits_{k=1}^N C_k^*C_k=I$.  Moreover, $(I-P)T=(I-P)T(I-P)$,  $\forall T\in \mathfrak A$. This implies that
$I-P=(I-P)CC^{-1}=(I-P)C(I-P)C^{-1}$. Thus
\begin{align}\label{th3}
 \|I-P)x\|^2\leq\|(I-P)C\|^2\|(I-P)C^{-1}x\|^2
\leq N \|(I-P)C^{-1}x\|^2
\end{align} for any $x\in L^2(\mathcal M)$.
It follows that $\|(I-P)C^{-1}x\|^2\geq N^{-1}\|(I-P)x\|^2$ and
\begin{align}\label{th4}
&\ \ \ \ \sum\limits_{k=1}^N\|(I-P)C_kx\|^2
 =  \sum\limits_{k=1}^N\|(I-P)A_kC^{-1}x\|^2\\
 \nonumber &\geq   \varepsilon^2 \|(I-P)C^{-1}x\|^2
 \geq \varepsilon^2 N^{-1}\|(I-P)x\|^2
\end{align} for all $x\in L^2(\mathcal M)$ by (\ref{th1}).
Now  put $\mathcal N=\mathcal M\otimes M_N$ and $\mathcal A=\mathfrak A\otimes M_N$, where $M_N$ is the algebra of all complex $N\times N$ matrices. Then $\mathcal A\subseteq \mathcal N$  is a maximal subdiagonal algebra with respect to $\Phi\otimes I$ with diagonal $\mathcal D=\mathfrak D\otimes M_N$.  It is elementary that $L^2(\mathcal N)=L^2(\mathcal M)\otimes L^2(M_N)$ and $H^2(\mathcal A)=H^2\otimes L^2(M_N)$. Put $Q=P\otimes I$ and
$$U=\left(\begin{array}{cccc}
C_1&0&\cdots&0\\
  C_2& 0 &\cdots&0\\
  \cdots&\cdots& \cdots& \cdots\\
 C_N&0&\cdots&0 \end{array}\right)\in \mathcal A.$$ Then $U$ is a partial isometry with
 $$U^*U=\left(\begin{array}{cccc}
I&0&\cdots&0\\
  0& 0 &\cdots&0\\
  \cdots&\cdots& \cdots& \cdots\\
 0&0&\cdots&0 \end{array}\right)=I\otimes E_{11} \in \mathcal D.$$  This implies that $U^*U(L^2(\mathcal N))=L^2(\mathcal M)\otimes E_{11}$. Moreover, $I-Q=(I-P)\otimes I$ and $\forall x\in L^2(\mathcal M)$,
 $ \tilde{x}=x\otimes E_{11}\in U^*U(L^2(\mathcal N))$. Thus
 \begin{align}\label{th5}
& \|(I-Q)U\tilde{x}\|^2=\sum\limits_{k=1}^N\|(I-P)C_kx\|^2\\
 \nonumber &\geq \varepsilon^2 N^{-1}\|(I-P)x\|^2=\varepsilon^2 N^{-1}\|(I-Q)\tilde{x}\|^2.
 \end{align} By Lemma \ref{l1}, there exists an operator  $V\in \mathcal A$ such that $VU=U^*U=I\otimes E_{11}$ with $\|V\|\leq 4N\varepsilon^{-2}$.
 Note that  $V=(V_{jk})\in \mathcal A$ for some $V_{jk}\in\mathfrak A$. Then $\sum\limits_{k=1}^NV_{1k}C_k=I$. Put $B_k=C^{-1}V_{1k}\in \mathfrak A$ for
 $1\leq k\leq N$. Then $\|B_k\|\leq \|V_{1k}\|\|C^{-1}\|\leq \|V\|\|C^{-1}\|\leq 4N\varepsilon^{-3}$  for $1\leq k\leq N$ and
 \begin{align*}
 &\ \ \ \ \sum\limits_{k=1}^NB_kA_k=\sum\limits_{k=1}^NC^{-1}V_{1k}A_{k}C^{-1}C\\
 &=C^{-1}(\sum\limits_{N=1}^NV_{1k}C_k)C=I.
 \end{align*}
\end{proof}
 We recall that the left(resp. right) Toeplitz operator $T_A$(resp. $t_A$) on $H^2$ for any $A\in \mathcal M$ is defined by $T_Ax=P_0(Ax)$(resp. $t_Ax=P_0(xA)$), $\forall x\in H^2$.  As in the classical case, we easily have that $(T_A)^*=T_{A^*}$(resp. $(t_A)^*=t_{A^*}$) for any $A\in \mathcal M$. In particular, if $A\in \mathfrak A$, $T_A$(resp. $t_A$) is said to be  a left(resp. right) analytic Toeplitz operator.
 If we consider $\mathfrak A^*$ as a maximal subdiagonal algebra, and $\tilde{P_0} $ and $\tilde{P_1}$ are projections onto $(H^2)^*$ and $(H_0^2)^*$ respectively, then $\tilde{P_0}=I-P_1$ and $\tilde{P_1}=I-P_0$. Thus  for any $A^*\in\mathfrak A^*$, $(I-\tilde{P_1})A^*=(I-\tilde{P_1})A^*(I-\tilde{P_1})=P_0A^*=P_0A^*P_0$. That is, the restriction of $(I-\tilde{P_1})A^*$ on $(I-\tilde{P_1})L^2(\mathcal M)=H^2$  is just the  Toeplitz operator  $T_{A^*}$ on $H^2$. Then we have the following corollary.
 \begin{corollary}
 Let $\mathfrak A$ be a subdiagonal algebra with the universal factorization property and $\{A_k: 1\leq k\leq N\}\subseteq \mathfrak A$.
If there exists a positive number $\varepsilon$ such that
\begin{equation*}\label{th0} \sum\limits_{k=1}^N \|A_k^*x\|^2\geq \varepsilon^2\|x\|^2, \forall x\in L^2(\mathcal M) \end{equation*} and
\begin{equation*}
\sum\limits_{k=1}^N\|T_{A_k}^*x\|^2\geq \varepsilon^2 \|x\|^2, \forall x\in H^2,
\end{equation*}  then there exist operators $\{B_k:1\leq k\leq N\}\subseteq\mathfrak A$
 such that
 \begin{equation*}\label{th2}
 \sum\limits_{k=1}^NA_kB_k=I
 \end{equation*} with $\|B_k\|\leq 4 N\alpha \varepsilon^{-3}$ for $1\leq k\leq N$.
 \end{corollary}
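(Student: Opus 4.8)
The plan is to dualize: apply Theorem \ref{t1} to the algebra $\mathfrak A^*$ together with the family $\{A_k^*:1\le k\le N\}$, and then take adjoints. The point is that the two hypotheses of the corollary are exactly the two hypotheses $(\ref{th0})$ and $(\ref{th1})$ of Theorem \ref{t1} \emph{for} $\mathfrak A^*$ once the Hardy-space data of $\mathfrak A^*$ are correctly identified.

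First I would record that $\mathfrak A^*$ is again a subdiagonal algebra with respect to $\Phi$: its diagonal is $\mathfrak A^*\cap\mathfrak A=\mathfrak D$, the expectation $\Phi$ remains multiplicative on $\mathfrak A^*$ because $\Phi$ is $*$-preserving (for $X,Y\in\mathfrak A$, $\Phi(X^*Y^*)=\Phi((YX)^*)=\Phi(YX)^*=\Phi(X)^*\Phi(Y)^*$), and $\mathfrak A^*+(\mathfrak A^*)^*=\mathfrak A^*+\mathfrak A$ is $\sigma$-weakly dense in $\mathcal M$. It also inherits the universal factorization property: given an invertible $S\in\mathcal M$, apply the universal factorization of $\mathfrak A$ to the invertible operator $(S^*)^{-1}$ to obtain a unitary $U\in\mathcal M$ with $U^*(S^*)^{-1}\in\mathfrak A$ and $S^*U\in\mathfrak A$; taking adjoints gives $U^*S=(S^*U)^*\in\mathfrak A^*$ and $S^{-1}U=\big(U^*(S^*)^{-1}\big)^*\in\mathfrak A^*$, so $U$ factors $S$ inside $\mathfrak A^*$.

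Next I would match the Hardy spaces. As recalled just before the statement, when $\mathfrak A^*$ is viewed as a subdiagonal algebra the projection onto its $H^2$-space is $\tilde{P_0}=I-P_1$, the projection onto its $H_0^2$-space is $\tilde{P_1}=I-P_0$, one has $(I-\tilde{P_1})A^*=P_0A^*=P_0A^*P_0$ for every $A\in\mathfrak A$, and the restriction of $(I-\tilde{P_1})A^*$ to $(I-\tilde{P_1})L^2(\mathcal M)=H^2$ is the Toeplitz operator $T_{A^*}=(T_A)^*$. I then apply Theorem \ref{t1} to $\mathfrak A^*$, to $\{A_k^*\}\subseteq\mathfrak A^*$, with the role of the projection $P$ played by $\tilde{P_1}$; the proof of Theorem \ref{t1} is symmetric under $P_0\leftrightarrow P_1$, using only Proposition \ref{p1} and Lemma \ref{l1}, both valid for either distinguished projection, so it applies with $P=\tilde{P_1}$ for $\mathfrak A^*$. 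Hypothesis $(\ref{th0})$ for $\{A_k^*\}$ is precisely the first inequality of the corollary. For hypothesis $(\ref{th1})$, for any $x\in L^2(\mathcal M)$ we have $(I-\tilde{P_1})x=P_0x\in H^2$ and $(I-\tilde{P_1})A_k^*x=P_0A_k^*P_0x=(T_{A_k})^*(P_0x)$, hence
\[
\sum_{k=1}^N\|(I-\tilde{P_1})A_k^*x\|^2=\sum_{k=1}^N\|T_{A_k}^*(P_0x)\|^2\ge\varepsilon^2\|P_0x\|^2=\varepsilon^2\|(I-\tilde{P_1})x\|^2
\]
by the second inequality of the corollary applied to $P_0x\in H^2$.

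Theorem \ref{t1} then produces $\{C_k:1\le k\le N\}\subseteq\mathfrak A^*$ with $\sum_{k=1}^NC_kA_k^*=I$ and $\|C_k\|\le 4N\alpha\varepsilon^{-3}$, where $\alpha=\max_k\|A_k^*\|=\max_k\|A_k\|$. Putting $B_k=C_k^*\in\mathfrak A$ and taking adjoints yields $\sum_{k=1}^NA_kB_k=\big(\sum_{k=1}^NC_kA_k^*\big)^*=I$ with $\|B_k\|=\|C_k\|\le 4N\alpha\varepsilon^{-3}$, which is the assertion. I expect the only non-routine points to be the transfer of the universal factorization property to $\mathfrak A^*$ and the bookkeeping identifying $(I-\tilde{P_1})A_k^*$ on $H^2$ with $T_{A_k^*}=(T_{A_k})^*$; once these are in place the proof is a direct dualization of Theorem \ref{t1}.
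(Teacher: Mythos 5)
Your proposal is correct and is essentially the paper's intended argument: the paper proves this corollary by exactly the dualization you describe, using the identifications $\tilde{P_1}=I-P_0$ and $(I-\tilde{P_1})A^*|_{H^2}=T_{A^*}$ set up in the paragraph preceding the statement, and then applying Theorem \ref{t1} to $\mathfrak A^*$ and $\{A_k^*\}$ and taking adjoints. Your explicit verification that $\mathfrak A^*$ is again subdiagonal with the universal factorization property is a detail the paper leaves implicit, and it is correctly done.
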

 Symmetrically, if we consider the right representation of $\mathcal M$ on $L^2(\mathcal M)$, we have the following result.
  \begin{corollary}
  Let $\mathfrak A$ be a subdiagonal algebra with the universal factorization property and $\{A_k: 1\leq k\leq N\}\subseteq \mathfrak A$.
If there exists a positive number $\varepsilon$ such that
\begin{equation*} \sum\limits_{k=1}^N \|xA_k\|^2\geq \varepsilon^2\|x\|^2, \forall x\in L^2(\mathcal M) \end{equation*} and
\begin{equation}\label{c251}
\sum\limits_{k=1}^N\|(I-P)(xA_k)\|^2\geq \varepsilon^2 \|(I-P)x\|^2, \forall x\in L^2(\mathcal M),
\end{equation}  then there exist operators $\{B_k:1\leq k\leq N\}\subseteq\mathfrak A$
 such that
 \begin{equation*}\label{th2}
 \sum\limits_{k=1}^NA_kB_k=I
 \end{equation*} with $\|B_k\|\leq 4 N\alpha \varepsilon^{-3}$ for $1\leq k\leq N$.
 \end{corollary}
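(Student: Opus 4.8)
The plan is to deduce this corollary from Theorem~\ref{t1} applied to the adjoint algebra $\mathfrak A^{*}$, using the natural conjugate-linear symmetry of $L^{2}(\mathcal M)$ to convert right multiplication by $A_{k}$ into left multiplication by $A_{k}^{*}$.

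First I would check that $\mathfrak A^{*}$ is again a subdiagonal algebra with the universal factorization property. As in the text, $\mathfrak A^{*}$ is a subdiagonal algebra with diagonal $\mathfrak D$ with respect to $\Phi$. For the factorization property, let $S\in\mathcal M$ be invertible; applying the factorization property of $\mathfrak A$ to the invertible operator $(S^{-1})^{*}$ we may write $(S^{-1})^{*}=UA$ with $U$ unitary and $A,A^{-1}\in\mathfrak A$, and then $S=U(A^{-1})^{*}$ with $(A^{-1})^{*}\in\mathfrak A^{*}$ invertible and $((A^{-1})^{*})^{-1}=A^{*}\in\mathfrak A^{*}$; hence $U^{*}S,\,S^{-1}U\in\mathfrak A^{*}$. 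The canonical projections attached to $\mathfrak A^{*}$ are $\tilde P_{0}=I-P_{1}$, the projection onto $(H^{2})^{*}$, and $\tilde P_{1}=I-P_{0}$, the projection onto $(H_{0}^{2})^{*}$.

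Next I would introduce the conjugate-linear isometric involution $\Psi$ on $L^{2}(\mathcal M)$ given by $\Psi x=x^{*}$. A direct computation gives $\Psi R_{A}\Psi=L_{A^{*}}$ for every $A\in\mathcal M$, and $\Psi$ maps $H^{2}$ onto $(H^{2})^{*}$ and $H_{0}^{2}$ onto $(H_{0}^{2})^{*}$ (clear from a symmetric description, e.g.\ $H^{2}=[h_{0}^{1/2}\mathfrak A]_{2}$ so $(H^{2})^{*}=[\mathfrak A^{*}h_{0}^{1/2}]_{2}=H^{2}(\mathfrak A^{*})$), so $\Psi P_{0}\Psi=\tilde P_{0}$ and $\Psi P_{1}\Psi=\tilde P_{1}$; consequently $\Psi(I-P)\Psi=I-\tilde P$, where $\tilde P=\tilde P_{0}$ if $P=P_{0}$ and $\tilde P=\tilde P_{1}$ if $P=P_{1}$. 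Since $\Psi$ is an isometry with $\Psi^{2}=I$, substituting $y=x^{*}$ turns the first hypothesis $\sum_{k=1}^{N}\|xA_{k}\|^{2}\geq\varepsilon^{2}\|x\|^{2}$ into $\sum_{k=1}^{N}\|A_{k}^{*}y\|^{2}\geq\varepsilon^{2}\|y\|^{2}$ for all $y\in L^{2}(\mathcal M)$, and turns (\ref{c251}) into $\sum_{k=1}^{N}\|(I-\tilde P)A_{k}^{*}y\|^{2}\geq\varepsilon^{2}\|(I-\tilde P)y\|^{2}$ for all $y\in L^{2}(\mathcal M)$. These are exactly the hypotheses of Theorem~\ref{t1} for the subdiagonal algebra $\mathfrak A^{*}$, the family $\{A_{k}^{*}:1\leq k\leq N\}\subseteq\mathfrak A^{*}$, and the canonical projection $\tilde P$; note $\max_{k}\|A_{k}^{*}\|=\alpha$.

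Finally, Theorem~\ref{t1} supplies $\{B_{k}^{\prime}:1\leq k\leq N\}\subseteq\mathfrak A^{*}$ with $\sum_{k=1}^{N}B_{k}^{\prime}A_{k}^{*}=I$ and $\|B_{k}^{\prime}\|\leq 4N\alpha\varepsilon^{-3}$; taking adjoints and setting $B_{k}=(B_{k}^{\prime})^{*}\in\mathfrak A$ gives $\sum_{k=1}^{N}A_{k}B_{k}=I$ with $\|B_{k}\|\leq 4N\alpha\varepsilon^{-3}$. The argument has no genuine obstacle; the only point requiring care is the third paragraph — verifying that $\Psi$ intertwines $P_{0},P_{1}$ with $\tilde P_{0},\tilde P_{1}$ as stated and matching each choice of $P$ with the corresponding $\tilde P$ — after which the result is a mechanical translation through $\Psi$ followed by an application of Theorem~\ref{t1}.
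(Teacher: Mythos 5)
Your proposal is correct and follows essentially the same route as the paper: pass to the adjoint algebra $\mathfrak A^{*}$, use the involution $x\mapsto x^{*}$ of $L^{2}(\mathcal M)$ to convert the right-multiplication hypotheses into the hypotheses of Theorem~\ref{t1} for $\{A_{k}^{*}\}\subseteq\mathfrak A^{*}$ with the projections $\tilde P_{0}=I-P_{1}$, $\tilde P_{1}=I-P_{0}$, and then take adjoints of the resulting solution. The only differences are cosmetic: you verify explicitly that $\mathfrak A^{*}$ inherits the universal factorization property and that the involution intertwines the projections (the paper does the latter by a direct computation with the decomposition $x=z^{*}+d+y$ and leaves the former implicit), and you treat both choices of $P$ rather than reducing to $P=P_{0}$.
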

 \begin{proof} Without loss of generality, we assume that $P=P_0$. Then
  $$\sum_{k=1}^N\|A_k^*x\|^2=\sum_{k=1}^{N}\|x^*A_k\|^2\geq\varepsilon^2\|x^*\|^2=\varepsilon^2\|x\|^2,
 \forall x\in L^2(\mathcal M).$$ On the other hand,
 For any  $x=z^*+d+y\in L^2(\mathcal M)$, where $y,z\in H_0^2$ and  $d\in L^2(\mathfrak D)$, we  have $x^*=y^*+d^*+z$ and  $(I-P_0)x^*=y^*$. Thus $((I-P_0)x^*)^*=y=(I-\tilde{P_0})x$. In particular, for any $A\in\mathfrak A$, $(I-\tilde{P_0})(A^*x)=\left((I-P_0)(A^*x)^*\right)^*=\left((I-P_0)(x^*A)\right)^*$. It follows from (\ref{c251}) that
  \begin{align*}\ \ \ \ \ \ \ \ \ &\sum_{k=1}^N\|(I-\tilde{P_0})A_k^*x\|^2=\sum_{k=1}^{N}\|\left((I-P_0)(x^*A_k)\right)^* \|^2
   =\sum_{k=1}^{N}\|(I-P_0)(x^*A_k) \|^2\\
  &\geq\varepsilon^2\|(I-P_0)x^*\|^2=\varepsilon^2\|\left((I-P_0)x^*\right)^*\|^2
  = \varepsilon^2\|(I-\tilde{P_0}) x\|^2.
  \end{align*}
  That is,  operators $\{A_1^*,A_2^*,\cdots, A_N^*\}\subseteq \mathfrak A^*$ satisfy the conditions of Theorem \ref{t1}. It follows that there exist
  $\{B_1^*,B_2^*,\cdots, B_N^*\}\subseteq \mathfrak A^*$ such that $\sum\limits_{k=1}^nB_k^*A_k^*=I$,  and consequently,
  $\sum\limits_{k=1}^NA_kB_k=I$.
 \end{proof}
 \section{Interpolation in nest subalgebras of von Neumann algebras}
We  give the following generalization of Arveson's interpolation theorem for nest algebras in $\mathcal B(\mathcal H)$ and get a noncommutative operator-theoretic variant of the Corona theorem  for nest subalgebras of a von Neumann algebra as in \cite{arv3} using Theorem \ref{t1}. In fact, this is precisely the   nest subalgebra component  of a type 1  subdiagonal algebra discussed in Section 4. Let $L$ be one of the following completely ordered lattices:

(1) $\mathbb Z\cup\{-\infty,+\infty\}=\{n:-\infty\leq n\leq \infty\}$;

(2) $\mathbb Z_+\cup\{+\infty\}=\{n:0\leq n\leq +\infty\}$;

(3) $\mathbb Z_-\cup\{-\infty\}=\{n:-\infty\leq n\leq 0\}$;

(4) $\{n: 0\leq n \leq K\}$ for a positive integer $K$.

  In fact, $L$ may be regarded as a sublattice of $(1)$.  Without loss of generality, we may assume that $L=\mathbb Z\cup \{-\infty, \infty\}$. Other cases are similar.  Let  $\mathcal N=\{Q_n: n\in L\}\subseteq \mathcal M$ be a  projection nest, that is, $Q_n< Q_{n+1} $  for any $n\in\mathbb Z$, $P_{-\infty}=\bigwedge_{n\in L}Q_n=0$ and  $P_{+\infty}=\bigvee_{n\in L} Q_n=I$.    Let $\mathfrak A=\mbox{alg}_{\mathcal M}\mathcal N=\{A\in \mathcal M: (I-Q_n)AQ_n=0, \forall n\in L\}$ be  the nest subalgebra of $\mathcal M$ associated with $\mathcal N$(cf.\cite{gil1}). Then  $\mathfrak A$ is  a maximal subdiagonal with respect to $\Phi$, where $\Phi(A)=\sum\limits_{-\infty}^{+\infty}E_nAE_n$ for all $A\in\mathcal M$ by \cite[Corollary 3.1.2]{arv3}. Put  $E_n=Q_n-Q_{n-1}$ for any   $n\in\mathbb Z$. Then $Q_n=\sum\limits_{m\leq n}E_m$ and $I-Q_n=\sum\limits_{m>n}E_m$. It is elementary that $\mathfrak A=\bigvee\{E_mAE_k: m\leq k,A\in\mathcal M\} $ and $\mathfrak A_0=\bigvee\{E_mAE_k:  m<k, A\in\mathcal M\}$.  Moreover,
  We still denote by $P_0$ and $P_1$ projections from $L^2(\mathcal M)$ onto $H^2$ and $H_0^2$ respectively.
\begin{theorem}\label{t2} Let  $\mathfrak A=\mbox{alg}_{\mathcal M}\mathcal N$ be  the nest subalgebra associated with the nest  $\mathcal N=\{P_n:n\in L\}$ in $\mathcal M$ and  $\{A_k: 1\leq k\leq N\}\subseteq \mathfrak A$. If there exists a positive number $\varepsilon$ such that
\begin{equation}\label{ar1}
\sum\limits_{k=1}^N\|(I-Q_n)A_kx\|^2\geq \varepsilon^2 \|(I-Q_n)x\|^2
\end{equation}
 for all $x\in\mathcal H$ and all $n\in L$, then there exist operators $\{B_k:1\leq k\leq N\}\subseteq\mathfrak A$
 such that
 \begin{equation}\label{ar2}
 \sum\limits_{k=1}^NB_kA_k=I
 \end{equation} with $\|B_k\|\leq 4 N\alpha \varepsilon^{-3}$ for $1\leq k\leq N$.
\end{theorem}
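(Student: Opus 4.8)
The plan is to apply Theorem~\ref{t1} directly to $\mathfrak A$. Two ingredients are needed: that $\mathfrak A$ has the universal factorization property, and that the hypotheses (\ref{th0}) and (\ref{th1}) of Theorem~\ref{t1} hold (with, say, $P=P_0$). For the first: since $\mathcal N$ is order isomorphic to a sublattice of $\mathbb Z\cup\{-\infty,+\infty\}$ the nest is atomic with discretely ordered atoms $E_n=Q_n-Q_{n-1}$, and such nest subalgebras admit universal factorization (classical in $\mathcal B(\mathcal H)$, cf.\ \cite{arv3}; it is also subsumed by the periodic-flow description of Section~4). It then remains to deduce (\ref{th0}) and (\ref{th1}) from (\ref{ar1}). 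Observe that (\ref{ar1}) is exactly the family of operator inequalities $\sum_{k=1}^N A_k^*(I-Q_n)A_k\ge\varepsilon^2(I-Q_n)$ in $\mathcal M$, one for each $n\in L$. Taking $n$ to be the least element of $L$, for which $Q_n$ is the zero projection and $I-Q_n=I$, gives $\sum_{k=1}^N A_k^*A_k\ge\varepsilon^2 I$ in $\mathcal M$; a positivity relation in $\mathcal M$ is preserved by every normal $*$-representation, in particular by the left representation on $L^2(\mathcal M)$, so $\sum_{k=1}^N\|A_k\xi\|^2\ge\varepsilon^2\|\xi\|^2$ for all $\xi\in L^2(\mathcal M)$, which is (\ref{th0}).

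For (\ref{th1}) I would exploit the atoms. First, each $E_n$ lies in the centralizer of $\varphi$: expanding $X=\sum_{p,q}E_pXE_q$ and using $\varphi=\varphi\circ\Phi$ with $\Phi(\cdot)=\sum_m E_m(\cdot)E_m$ gives $\varphi(E_mXE_k)=0$ whenever $m\ne k$, whence $\varphi(E_nX)=\varphi(E_nXE_n)=\varphi(XE_n)$ for every $X\in\mathcal M$; in particular $E_n$ commutes with $h_0$ and all its powers. Hence the right projection $R_{E_n}$ on $L^2(\mathcal M)$ commutes with every $L_A$ ($A\in\mathcal M$), and since $E_n\in\mathfrak D\subseteq\mathfrak A$ forces $R_{E_n}(\mathfrak A h_0^{1/2})=\mathfrak A E_n h_0^{1/2}\subseteq\mathfrak A h_0^{1/2}$ (and the same with $I-E_n$), $R_{E_n}$ commutes with $P_0$ as well. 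Thus $L^2(\mathcal M)=\bigoplus_n\mathcal H^{(n)}$ orthogonally, $\mathcal H^{(n)}=R_{E_n}L^2(\mathcal M)=[\mathcal M E_n h_0^{1/2}]_2$, each $\mathcal H^{(n)}$ invariant under every $L_{A_k}$ and under $P_0$; and the crucial point is that on $\mathcal H^{(n)}$ one has $P_0|_{\mathcal H^{(n)}}=L_{Q_n}|_{\mathcal H^{(n)}}$, because $H^2\cap\mathcal H^{(n)}=R_{E_n}H^2=[\mathfrak A E_n h_0^{1/2}]_2=[Q_n\mathcal M E_n h_0^{1/2}]_2=L_{Q_n}\mathcal H^{(n)}$, using that $AE_n=Q_nAE_n$ for $A\in\mathfrak A$ while $Q_nBE_n\in\mathfrak A$ for every $B\in\mathcal M$.

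Now fix $n$ and push the inequality $\sum_{k=1}^N A_k^*(I-Q_n)A_k\ge\varepsilon^2(I-Q_n)$ — that is, (\ref{ar1}) for this $n$ — through the normal representation $X\mapsto L_X|_{\mathcal H^{(n)}}$ of $\mathcal M$. Since $L_{A_k}$ and $I-P_0$ leave $\mathcal H^{(n)}$ invariant and $(I-P_0)|_{\mathcal H^{(n)}}=L_{I-Q_n}|_{\mathcal H^{(n)}}$, this reads $\sum_{k=1}^N\|(I-P_0)L_{A_k}\xi\|^2\ge\varepsilon^2\|(I-P_0)\xi\|^2$ for all $\xi\in\mathcal H^{(n)}$. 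Because $L_{A_k}$ and $I-P_0$ respect the orthogonal decomposition $L^2(\mathcal M)=\bigoplus_n\mathcal H^{(n)}$, summing over $n$ gives the same inequality for all $\xi\in L^2(\mathcal M)$, which is (\ref{th1}). Theorem~\ref{t1} then yields $\{B_k:1\le k\le N\}\subseteq\mathfrak A$ with $\sum_{k=1}^N B_kA_k=I$ and $\|B_k\|\le 4N\alpha\varepsilon^{-3}$. For the lattices $L$ of types (2)--(4) the argument is verbatim, reading the least element of $L$ in place of $-\infty$ and the atom indices in place of $\mathbb Z$.

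I expect the one genuinely substantive input to be the universal factorization property of these nest subalgebras; granting that, the remaining work is the bookkeeping above, whose crux is the identification, column by column, of $P_0|_{\mathcal H^{(n)}}$ with the nest projection $L_{Q_n}$. It is precisely this identification — together with the fact that (\ref{ar1}) is assumed uniformly over all $n\in L$ — that transports the scalar condition (\ref{ar1}) on $\mathcal H$ to the $L^2(\mathcal M)$-condition (\ref{th1}) demanded by Theorem~\ref{t1}.
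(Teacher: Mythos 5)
Your argument is correct and follows essentially the same route as the paper: the first hypothesis of Theorem \ref{t1} is obtained from (\ref{ar1}) at the least element of $L$ (where $Q_n=0$), and the second from the column decomposition $L^2(\mathcal M)=\bigoplus_n R_{E_n}L^2(\mathcal M)$ together with the identification of $P_0$ with $L_{Q_n}$ on the $n$-th column (the paper verifies exactly this identity by matrix computations with the entries $E_kxE_m$), after which Theorem \ref{t1} applies. The one point to tighten is the universal factorization property: since $\mathcal M$ is a general $\sigma$-finite von Neumann algebra, Arveson's $\mathcal B(\mathcal H)$ result does not apply verbatim; the paper obtains it by observing that $\mathfrak A_0$ contains no nonzero idempotent and invoking \cite[Theorem 3.8]{js}.
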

\begin{proof}  We assume that $L=\mathbb Z\cup\{-\infty,+\infty\}$.  Other cases are similar.  As in the case $\mathcal M=\mathcal B(\mathcal H)$(cf. \cite[Theorem 4.3]{arv3}),
 we  immediately have that $\mathfrak A_0$ contains no  nonzero idempotent.  Then by \cite[Theorem 3.8]{js}, $\mathfrak A$ has the universal factorization property. Note that    $L^2(\mathcal M)=\oplus_{k,m}E_kL^2(\mathcal M)E_m$.
               Without loss of generality, we may also represent any element $x\in L^2(\mathcal M)$ as a matrix form $x=\left(E_kxE_m\right)_{k,m}=\left(x_{k,m}\right)_{k,m}$. Thus
\begin{align}\label{ar3}
  H^2=P_0L^2(\mathcal M)
& =\bigvee \{ E_mxE_k: x\in L^2(\mathcal M),\forall m\leq k, \}\\
\nonumber &=\{x\in L^2(\mathcal M): E_mxE_k=0, \forall m>k\}
\end{align}
and
\begin{align}\label{ar4}
  (H_0^2)^*=(I-P_0)L^2(\mathcal M)
 &=\bigvee \{ E_mxE_k:  x\in L^2(\mathcal M), \forall m> k\}\\
\nonumber &=\{x\in L^2(\mathcal M): E_mxE_k=0, \forall m\leq k\}.
\end{align}
It is trivial that the inequality (\ref{ar1}) is equivalent to
\begin{equation}\label{ar5}
\sum\limits_{k=1}^N(I-Q_n)A_k^*(I-Q_n)A_k(I-Q_n)\geq \varepsilon^2 (I-Q_n)
\end{equation}  for all $n$. Thus    the inequality (\ref{ar1}) also  holds  replacing $x\in \mathcal H$ by $ x\in L^2(\mathcal M)$.
 That is,
 \begin{equation}\label{ar6}
\sum\limits_{k=1}^N\|(I-Q_n)A_kx\|^2\geq \varepsilon^2 \|(I-Q_n)x\|^2
\end{equation}
for all $x\in L^2(\mathcal M)$. By letting $n=-\infty$, we easily have that
$$\sum_{k=1}^n\|A_kx\|^2\geq \varepsilon^2\|x\|^2, \forall x\in L^2(\mathcal M).$$Now
for any $A\in\mathfrak A$,
\begin{align}\label{ar8}
&\ \ \ \  (I-Q_n)A=\sum_{n+1\leq k\leq m}E_kAE_m\\
\nonumber&=\left(\begin{array}{ccccc}\ddots&\vdots&\vdots&\vdots&\vdots\\
  \cdots& 0 &0&0&\cdots\\
  \cdots&0& A_{n+1,n+1}&A_{n+1,n+2}&\cdots\\
  \cdots&0&0&A_{n+2,n+2}&\cdots\\
  \vdots&\vdots&\vdots&\vdots&\ddots\end{array}\right).
\end{align}
Moreover, since $E_m\in\mathfrak D$,  we have  that $P_0R_{E_m}=R_{E_m}P_0$  for any $m$. Thus  $((I-P_0)x)E_m=R_{E_m}(I-P_0)x=(I-P_0)R_{E_m}x=(I-P_0)(xE_m)$ for any $m$ and all $x\in L^2(\mathcal M)$. In particular,
\begin{align}\label{ar9}
  &\ \ \ \ ((I-P_0)x)E_n=(I-P_0)(xE_n)\\
 \nonumber &=\left(\begin{array}{ccccc}\ddots&\vdots&\vdots&\vdots&\vdots\\
  \cdots& 0 &0&0&\cdots\\
  \cdots&x_{n+1,n}& 0&0&\cdots\\
  \cdots&x_{n+2,n}&0&0&\cdots\\
  \vdots&\vdots&\vdots&\vdots&\ddots\end{array}\right)\\
  \nonumber &=\oplus_{k=n+1}^{+\infty}  x_{k,n}
\end{align} in $L^2(\mathcal M)$.
It now follows from  (\ref{ar9}) that
\begin{equation}\label{ar10}
(I-Q_n)((I-P_0)(xE_n))=(I-P_0)(xE_n).
\end{equation}  Again, by (\ref{ar8}) and (\ref{ar10}), we have
\begin{align}\label{ar11}
&\ \ \ \ (I-Q_n)A(I-Q_n)((I-P_0)(xE_n))\\
\nonumber &=\left(\begin{array}{ccccc}\ddots&\vdots&\vdots&\vdots&\vdots\\
  \cdots& 0 &0&0&\cdots\\
  \cdots&\sum_{m=n+1}^{\infty}A_{n+1,m}x_{m,n}& 0&0&\cdots\\
  \cdots& \sum_{m=n+2}^{\infty}A_{n+2,m}x_{m,n}&0&0&\cdots\\
  \vdots&\vdots&\vdots&\vdots&\ddots\end{array}\right)\\
\nonumber &=\bigoplus_{k=n+1}^{+\infty} \left(\sum_{m=k}^{\infty}A_{k,m} x_{m,n}\right)
\end{align} in $L^2(\mathcal M)$.
It follows   from (\ref{ar11}) that
\begin{equation}\label{ar12}
(I-P_0)(I-Q_n)A((I-Q_n)(I-P_0)(xE_n))=(I-Q_n)A((I-Q_n)(I-P_0)(xE_n))\end{equation} for all $n\in\mathbb Z$.
On the other hand,
\begin{align}\label{ar13}
&\ \ \ \ A(I-P_0)(xE_n)\\
\nonumber &=\left(\begin{array}{ccccc}\ddots&\vdots&\vdots&\vdots&\vdots\\
  \cdots& \sum_{m=n+1}^{\infty}A_{n,m}x_{m,n} &0&0&\cdots\\
  \cdots&\sum_{m=n+1}^{\infty}A_{n+1,m}x_{m,n}& 0&0&\cdots\\
  \cdots& \sum_{m=n+2}^{\infty}A_{n+2,m}x_{m,n}&0&0&\cdots\\
  \vdots&\vdots&\vdots&\vdots&\ddots\end{array}\right)\\
  \nonumber &=\bigoplus_{k=-\infty}^{n} \left(\sum_{m=n+1}^{\infty}A_{k,m} x_{m,n}\right)\oplus \bigoplus_{k=n+1}^{+\infty} \left(\sum_{m=k}^{\infty}A_{k,m} x_{m,n}\right).
\end{align}
It follows from (\ref{ar9}) and (\ref{ar13})
that
\begin{align}\label{ar14}
&\ \ \ \ (I-P_0)A(I-P_0)(xE_n)=\bigoplus_{k=n+1}^{+\infty} \left(\sum_{m=k}^{\infty}A_{k,m} x_{m,n}\right)\\
\nonumber &=(I-Q_n)A(I-Q_n)(I-P_0)(xE_n).
\end{align}
Moreover,
\begin{equation}\label{ar15}
(I-P_0)A(I-P_0)x=\oplus_{n=-\infty}^{+\infty} (I-P_0)A(I-P_0)(xE_n).
\end{equation}
Thus
\begin{align}\label{ar16}
&\ \ \ \  \|(I-P_0)A(I-P_0)x\|^2=\sum_{n=-\infty}^{+\infty}\| (I-P_0)A(I-P_0)(xE_n)\|^2\\
\nonumber &=\sum_{n=-\infty}^{+\infty}\|(I-Q_n)A(I-Q_n)(I-P_0)(xE_n)\|^2.
\end{align}
Furthermore, by (\ref{ar10}), we have
\begin{align}\label{ar17}
&\ \ \ \  \sum\limits_{k=1}^N\|(I-P_0)A_kx\|^2\\
\nonumber &=\sum\limits_{k=1}^N\|(I-P_0)A_k(I-P_0)x\|^2\\
\nonumber &=\sum\limits_{k=1}^N\sum_{n=-\infty}^{+\infty}\| (I-P_0)A_k(I-P_0)(xE_n)\|^2\\
\nonumber &=\sum\limits_{k=1}^N\sum_{n=-\infty}^{+\infty}\|(I-Q_n)A(I-Q_n)(I-P_0)(xE_n)\|^2\\
\nonumber &\geq  \sum_{n=-\infty}^{+\infty}\varepsilon^2 \|(I-Q_n)(I-P_0)(xE_n)\|^2\\
\nonumber &=\varepsilon^2 \sum_{n=-\infty}^{+\infty} \|(I-P_0)(xE_n)\|^2\\
\nonumber &=\varepsilon^2\|(I-P_0)x\|^2.
\end{align} It now follows from  Theorem \ref{t1} that  there exist $\{B_k:1\leq k\leq N\}\subseteq \mathfrak A$  with $\|B_k\|\leq 4N\alpha \varepsilon^{-3}$ such that
$\sum_{k=1}^NB_kA_k=I$.
\end{proof}
 To establish  a noncommutative   operator-theoretic variant of the Corona theorem(cf. \cite[Theorem 6.3]{arv3}),  we firstly consider a special case for   nest subalgebras. We denote by $\mathcal M^{\prime}$ and  $\mathcal Z(\mathcal M)=\mathcal M\cap \mathcal M^{\prime}$ the commutant  and the center of $\mathcal M$ respectively.  If $\mathcal Z(\mathcal M)=\mathbb C I$, then we say that $\mathcal M$ is a factor. For any $A\in\mathcal M$, $c(A)$ denotes the central carrier of $A$.
 \begin{lemma}\label{l27}
 Let $\mathcal N=\{Q_j: 0\leq j\leq n\}\subseteq \mathcal M$ be a finite nest such that $0=Q_0<Q_1<\cdots<Q_n=I$ and $ \mathfrak A=\mbox{alg}_{\mathcal M}\mathcal N$ the nest subalgebra of $\mathcal M$  associated with $\mathcal N$. Assume that $\{A_k:1\leq k\leq N\}\subseteq \mathfrak A$.
 If there exists a positive number $\varepsilon$ such that
\begin{equation}\label{fl21}
\sum\limits_{k=1}^N\|(I-P_1)A_kx\|^2\geq \varepsilon^2 \|(I-P_1)x\|^2, \forall x\in L^2(\mathcal M),
\end{equation}  then
\begin{equation}\label{fl22}
\sum\limits_{k=1}^N\|A_kx\|^2\geq \varepsilon^2 \|x\|^2, \forall x\in L^2(\mathcal M).
\end{equation}
 \end{lemma}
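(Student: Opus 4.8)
The plan is to prove the equivalent operator inequality $\sum_{k=1}^N A_k^*A_k \ge \varepsilon^2 I$ in $\mathcal M$, by induction on the length $n$ of the nest $\mathcal N$ (the number of atoms $E_j = Q_j - Q_{j-1}$). The base case $n=1$ is immediate: then $\mathfrak A = \mathcal M$, $\mathfrak A_0 = \{0\}$, $H_0^2 = \{0\}$ and $P_1 = 0$, so the hypothesis (\ref{fl21}) is literally the conclusion (\ref{fl22}).

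For the inductive step the first move -- and the heart of the matter -- is to test (\ref{fl21}) only against the ``first column'' $L^2(\mathcal M)E_1 = R_{E_1}L^2(\mathcal M)$. If $x = xE_1$ then $E_i x E_j = 0$ for all $i<j$, so $x \in (H^2)^* = (I-P_1)L^2(\mathcal M)$, i.e.\ $(I-P_1)x = x$; combining this with $\|(I-P_1)A_kx\|_2 \le \|A_kx\|_2$, (\ref{fl21}) yields $\sum_k \|A_kx\|_2^2 \ge \varepsilon^2\|x\|_2^2$ for every $x\in L^2(\mathcal M)E_1$. I would then convert this into a statement about the central carrier $c(E_1)$. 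Put $S = \sum_k A_k^*A_k - \varepsilon^2 I$ and let $S = S_+ - S_-$ be its decomposition into positive and negative parts, with $e$ the support projection of $S_-$. The inequality above reads $\|S_+^{1/2}\,\xi E_1\|_2 \ge \|S_-^{1/2}\,\xi E_1\|_2$ for all $\xi \in L^2(\mathcal M)$. Were $c(e)c(E_1) \ne 0$ one could pick $0 \ne y \in eL^2(\mathcal M)E_1$; then $y = ey = yE_1$ forces $S_-^{1/2}y \ne 0$ while $S_+^{1/2}y = S_+^{1/2}ey = 0$ (the supports of $S_\pm$ being orthogonal), a contradiction. Hence $c(e) \le I - c(E_1)$, so $c(E_1)S_- = 0$ and $S\,c(E_1) = S_+ c(E_1) \ge 0$; that is, $\sum_k A_k^*A_k \ge \varepsilon^2 I$ holds on the central summand $c(E_1)\mathcal M$.

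On the complementary summand $\mathcal M_1 = (I-c(E_1))\mathcal M$ the atom $E_1 = Q_1$ vanishes, so the distinct elements of $\{(I-c(E_1))Q_j\}_{j=0}^n$ form a nest of length at most $n-1$, and $\mathfrak A_1 = (I-c(E_1))\mathfrak A$ is its nest subalgebra in $\mathcal M_1$. Since $c(E_1)$ is central it commutes with $P_1$, and on $L^2(\mathcal M_1) = (I-c(E_1))L^2(\mathcal M)$ the operator $(I-c(E_1))P_1$ is exactly the projection onto $H_0^2(\mathfrak A_1)$; restricting (\ref{fl21}) to $x \in L^2(\mathcal M_1)$ (with $A_k$ replaced by $(I-c(E_1))A_k$) is then precisely the hypothesis of the lemma for $\mathfrak A_1$, so the inductive hypothesis gives $\sum_k A_k^*A_k \ge \varepsilon^2 I$ on $\mathcal M_1$ as well. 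Adding the two central pieces yields $\sum_k A_k^*A_k \ge \varepsilon^2 I$, hence (\ref{fl22}).

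I expect the only genuine obstacle to be \emph{recognising} the first step: a priori (\ref{fl21}) controls only the $(H^2)^*$-component of $A_kx$ against the $(H^2)^*$-component of $x$, but it becomes sharp when $x$ runs through the first column because that column lies entirely in $(H^2)^*$ -- and this is exactly where the asymmetry between $P_0$ and $P_1$ in the statement is used (symmetrically one could instead peel off the last atom using $P_0$ in place of $P_1$). Everything afterwards is routine: the elementary fact that $L_eR_{E_1}\ne 0$ if and only if $c(e)c(E_1)\ne 0$, and the standard verification that cutting $\mathcal M$, $\mathfrak A$ and $H_0^2$ by the central projection $I-c(E_1)$ behaves as expected and strictly shortens the nest.
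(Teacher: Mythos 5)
Your proof is correct, but it runs on a genuinely different engine than the paper's. Both arguments open with the same observation (your ``first move'' is exactly the paper's base case): columns over the first atom, $L^2(\mathcal M)E_1$, lie in $(I-P_1)L^2(\mathcal M)$, so (\ref{fl21}) already gives $\sum_k\|A_kx\|^2\ge\varepsilon^2\|x\|^2$ there. From that point the paper inducts on the atom index $k$ inside the fixed algebra, proving the estimate for the vectors $xQ_k$: to absorb the new atom $E_k$ it splits $c(E_k)$ into $C_1=c(Q_{k-1})c(E_k)$ and $C_2=c(E_k)-C_1$, handles $C_1$ by choosing a maximal orthogonal family of partial isometries in $C_1E_k\mathcal M C_1Q_{k-1}$ with $\sum_l V_lV_l^*=C_1E_k$ and transporting $xE_kV_l$ back into earlier columns via a trace identity, and handles $C_2$ by noting that there $E_k$ becomes the first atom of the cut nest. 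You instead induct on the length of the nest: a support-projection argument (the negative part of $S=\sum_kA_k^*A_k-\varepsilon^2I$ must be centrally orthogonal to $E_1$, via the standard fact that $L_eR_{E_1}=0$ forces $c(e)c(E_1)=0$) upgrades the first-column estimate to the full operator inequality on the central summand $c(E_1)\mathcal M$, and on $(I-c(E_1))\mathcal M$ the first atom vanishes, the nest strictly shortens, and the induction hypothesis applies; adding the two central pieces finishes. Your route avoids the comparison-theory/partial-isometry machinery and the trace computations altogether, and isolates a slightly stronger intermediate fact (positivity on all of $c(E_1)\mathcal M$ from the first column alone); its only overhead is the routine verification that cutting $\mathcal M$, $\mathfrak A$, $H_0^2$ and $P_1$ by a central projection of $\mathfrak D$ behaves as expected, an identification the paper itself uses freely (in Theorem \ref{t28} even for non-central corners), so the sketch is sound as written.
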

\begin{proof}   As in the proof of Theorem \ref{t2}, put $E_j=Q_j-Q_{j-1}$, $j=1,2,\cdots,n$. Then $\Phi(A)=\sum\limits_{j=1}^nE_jAE_j$, $\forall A\in\mathcal M$. We easily have that $\mathfrak A=\vee\{E_jAE_k: A\in\mathcal M, 1\leq j\leq k\leq n\}$ and $\mathfrak A_0=\vee\{E_jAE_k:A\in\mathcal M, 1\leq j<k\leq n\}$. At the same time,
$L^2(\mathcal M)=\oplus_{j,k=1}^n E_jL^2(\mathcal M)E_k$, $H^2=\oplus_{1\leq j\leq k\leq n} E_jL^2(\mathcal M)E_k$ and
$H^2_0=\oplus_{1\leq j< k\leq n} E_jL^2(\mathcal M)E_k$. It now follows that $P_1=\sum_{1\leq j<k\leq n}L_{E_j}R_{E_k}$ and $I-P_1=\sum_{1\leq k\leq j\leq n}L_{E_j}R_{E_k}$.  Note that $\mathcal Z(\mathcal M)\subseteq \mathfrak D$. Thus for any projection $C\in \mathcal Z(\mathcal M)$, $C\mathfrak A$ is the  nest subalgebra of $C\mathcal M$ with nest $C\mathcal N=\{CQ_k:1\leq k\leq n\}$.  For any $1\leq k\leq n$, we prove that  the inequality (\ref{fl22}) holds for $xQ_k$ by induction.
If  $k=1$, then $Q_1=E_1$ and   $xE_1\in(I-P_1)L^2(\mathcal M)$ for any $x\in L^2(\mathcal M)$. It follows that
\begin{align*} &\ \ \ \ \sum_{j=1}^N\|A_jxQ_1\|^2=\sum_{j=1}^n\|(I-P_1)A_jxE_1\|^2\\
&\geq \varepsilon^2\|(I-P_1)xE_1\|^2=\varepsilon^2\|xE_1\|^2=\varepsilon^2\|xQ_1\|^2\end{align*}
by inequality (\ref{fl21}). Assume that for all $j\leq k-1$, the  inequality (\ref{fl22}) holds for all $xQ_j$. Then for any $xQ_k$, we have   $xQ_k=xQ_{k-1}\oplus xE_k$ and $AxQ_k=AxQ_{k-1}\oplus AxE_k$.  We next claim that inequality (\ref{fl22}) holds for  $xE_k$.  Recall that  $c(A)$  is the central carrier of $A$ for any $A\in\mathcal M$.  Put $C_1=c(Q_{k-1})c(E_k)$ and $C_2=c(E_k)-C_1$. Then $c(E_k)=C_1+C_2$,
$c(C_1Q_{k-1})=C_1=c(C_1E_k)$ and $C_2Q_{k-1}=0$. Take a maximal family of partial isometries $\{V_l:l\geq1\}\subseteq C_1E_k\mathcal MC_1Q_{k-1}$ such that $V_r^*V_s=0$ for any $r\not=s$. Then $\sum_{l\geq1}V_lV_l^*=C_1E_k$. Otherwise, since $c(C_1P_{k-1})=C_1=c(C_1E_k)$, there  is a nonzero partial isometry $V_0$ such that $V_0^*V_0\leq C_1Q_{k-1}$ and $0\not=V_0V_0^*\leq C_1(E_k-\sum_{l\geq1}V_lV_l^*)$. This contradicts with the  maximality of $\{V_l:l\geq1\}$. Now for any $l\geq1$, we have   $xE_kV_l=xE_kV_lQ_{k-1}$. By induction assumption,
$$\sum\limits_{j=1}^N\|A_jxE_kV_l\|^2\geq \varepsilon^2\|xE_kV_l\|^2.$$
This means that
\begin{align*}
&\ \ \ \ \sum\limits_{j=1}^N\|A_jxE_kV_l\|^2=\sum\limits_{j=1}^N \mbox{tr}(V_l^*E_kx^*A_j^*A_jxE_kV_l)
=\sum\limits_{j=1}^N \mbox{tr}(E_kx^*A_j^*A_jxE_kV_lV_l^*)\\
&\geq\varepsilon^2\|xE_kV_l\|^2=\varepsilon^2\mbox{tr}(V_l^*E_kx^*xE_kV_l)
=\varepsilon^2\mbox{tr}(E_kx^*xE_kV_lV_l^*)
\end{align*}
for all $l\geq1$. Thus
\begin{align}\label{fl23}
&\ \ \ \ \sum\limits_{j=1}^N\|A_jxC_1E_k\|^2= \sum\limits_{j=1}^N \mbox{tr}(C_1E_kx^*A_j^*A_jxE_kC_1 )\\
\nonumber &=\sum\limits_{j=1}^N\sum_{l\geq1} \mbox{tr}(C_1E_kx^*A_j^*A_jxC_1E_kV_lV_l^*)\\
\nonumber&=\sum_{l\geq1}\sum\limits_{j=1}^N  \mbox{tr}(C_1E_kx^*A_j^*A_jxC_1E_kV_lV_l^*)\\
\nonumber&\geq \varepsilon^2\sum_{l\geq1}\mbox{tr}(C_1E_kx^*xC_1E_kV_lV_l^*)\\
\nonumber&=\varepsilon^2 \mbox{tr}(C_1E_kx^*xC_1E_k)\\
\nonumber&=\varepsilon^2\|xC_1E_k\|^2.
\end{align}
On the other hand, $C_2Q_{k-1}=0$. This implies  that $C_2\mathcal N=\{C_2Q_k,\cdots, C_2Q_n\}$ and $C_2\mathfrak A$ is the nest subalgebra $C_2\mathcal M\cap\mbox{alg}C_2\mathcal N$. Thus $(I-P_1)C_2L^2(\mathcal M)=\vee\{C_2E_rx E_s:  x\in L^2(\mathcal M), k\leq r\leq s\leq n\}$. In particular,
$C_2L^2(\mathcal M)E_k\subseteq (I-P_1)C_2L^2(\mathcal M)\subseteq (I-P_1)L^2(\mathcal M)$. Therefore,
\begin{equation}\label{fl24}
\sum\limits_{j=1}^N\|A_jxC_2E_k\|^2\geq \varepsilon^2\|xC_2E_k\|^2.
\end{equation} Combining  (\ref{fl23})  and (\ref{fl24}), we have
\begin{align*}
&\ \ \ \ \sum\limits_{j=1}^N\|A_jxE_k\|^2=\sum\limits_{j=1}^N\|A_jx(C_1+C_2)E_k\|^2\\
&=\sum\limits_{j=1}^N\|A_jxC_1E_k\|^2+\sum\limits_{j=1}^N\|A_jxC_2E_k\|^2\\
&\geq\varepsilon^2\|xC_1E_k\|^2+\varepsilon^2\|xC_2E_k\|^2\\
&=\varepsilon^2\|xE_k\|^2.
\end{align*}
Consequently, \begin{align*}&\ \ \ \  \sum\limits_{j=1}^N\|A_jxQ_k\|^2=\sum\limits_{j=1}^N\|A_jx(Q_{k-1}+E_k)\|^2\\
&=\sum\limits_{j=1}^N\|A_jxP_{k-1}\|^2+\sum\limits_{j=1}^N\|A_jxE_k\|^2\\
&\geq \varepsilon^2(\|xQ_{k-1}\|^2+\|xE_k\|^2)\\
&=\varepsilon^2\|xQ_k\|^2.
\end{align*}
By taking  $k=n$, the inequality (\ref{fl22}) holds for any $x\in L^2(\mathcal M)$.
\end{proof}
\begin{theorem}\label{t28}
 Let $\mathcal N=\{Q_n:   n\in L\}\subseteq \mathcal M$ be a   nest  and $ \mathfrak A=\mbox{alg}_{\mathcal M}\mathcal N$ the nest subalgebra associated with $\mathcal N$. Assume that $\{A_k:1\leq k\leq N\}\subseteq \mathfrak A$.
 If there exists a positive number $\varepsilon$ such that
\begin{equation}\label{ft281}
\sum\limits_{k=1}^N\|(I-P_1)A_kx\|^2\geq \varepsilon^2 \|(I-P_1)x\|^2, \forall x\in L^2(\mathcal M),
\end{equation}  then there exist operators $\{B_k:1\leq k\leq N\}\subseteq\mathfrak A$
 such that
 \begin{equation*}
 \sum\limits_{k=1}^NB_kA_k=I
 \end{equation*}  with $\|B_k\|\leq 4N\alpha\varepsilon^{-3}$.
 \end{theorem}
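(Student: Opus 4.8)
The plan is to reduce the theorem to Theorem~\ref{t1}, applied with $P=P_1$. First, exactly as in the proof of Theorem~\ref{t2}, $\mathfrak A=\mbox{alg}_{\mathcal M}\mathcal N$ is a maximal subdiagonal algebra for $\Phi(A)=\sum_n E_nAE_n$ with $E_n=Q_n-Q_{n-1}$, and $\mathfrak A_0$ contains no nonzero idempotent, so $\mathfrak A$ has the universal factorization property by \cite[Theorem 3.8]{js}. The hypothesis (\ref{ft281}) is precisely condition (\ref{th1}) of Theorem~\ref{t1} with $P=P_1$, so everything comes down to verifying condition (\ref{th0}), namely $\sum_{k=1}^N\|A_kx\|^2\geq\varepsilon^2\|x\|^2$ for all $x\in L^2(\mathcal M)$; granting this, Theorem~\ref{t1} at once produces $\{B_k\}\subseteq\mathfrak A$ with $\sum_kB_kA_k=I$ and $\|B_k\|\leq 4N\alpha\varepsilon^{-3}$. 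If $\mathcal N$ is finite, (\ref{th0}) is the conclusion of Lemma~\ref{l27}, so the only new work is the case of an infinite nest, which I would treat by localizing to finite corners and passing to a limit. As in the earlier proofs I take $L=\mathbb Z\cup\{-\infty,+\infty\}$; the other cases are entirely analogous.

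For $m\geq 1$ set $e_m=Q_m-Q_{-m}=\sum_{-m<j\leq m}E_j\in\mathfrak D$. The corner $e_m\mathcal Me_m$ is a $\sigma$-finite von Neumann algebra, the restriction of $\Phi$ is a faithful normal conditional expectation onto its diagonal $e_m\mathfrak De_m$, and $e_m\mathfrak Ae_m=\mbox{alg}_{e_m\mathcal Me_m}\{Q_j-Q_{-m}:-m\leq j\leq m\}$ is a finite nest subalgebra of $e_m\mathcal Me_m$ containing every $e_mA_ke_m$. Under the natural identification $L^2(e_m\mathcal Me_m)=e_mL^2(\mathcal M)e_m=\bigoplus_{-m<j,\,l\leq m}E_jL^2(\mathcal M)E_l$, the matrix description of the $H^2_0$-space (as in the proof of Lemma~\ref{l27}) gives $H^2_0(e_m\mathfrak Ae_m)=e_mL^2(\mathcal M)e_m\cap H^2_0(\mathfrak A)$, so that the projection $I-P_1^{(m)}$ onto the orthocomplement of $H^2_0(e_m\mathfrak Ae_m)$ in $e_mL^2(\mathcal M)e_m$ is simply the restriction of $I-P_1$. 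The key observation is that, since each $A_k$ is upper triangular for the nest, for $y\in e_mL^2(\mathcal M)e_m$ the product $A_ky$ has its rows supported on $(-\infty,m]$ and its columns on $(-m,m]$; hence $(I-P_1)A_ky$, which retains only the entries $(j,l)$ with $j\geq l$, is supported on indices with $-m<j\leq m$ and $-m<l\leq m$, that is, $(I-P_1)A_ky\in e_mL^2(\mathcal M)e_m$. As $I-P_1$ commutes with $L_{e_m}$ and $R_{e_m}$, it follows that $(I-P_1^{(m)})(e_mA_ke_m)y=(I-P_1)A_ky$ and $(I-P_1^{(m)})y=(I-P_1)y$ for $y\in e_mL^2(\mathcal M)e_m$, so hypothesis (\ref{ft281}) transfers verbatim:
\[
\sum_{k=1}^N\|(I-P_1^{(m)})(e_mA_ke_m)y\|^2=\sum_{k=1}^N\|(I-P_1)A_ky\|^2\geq\varepsilon^2\|(I-P_1)y\|^2=\varepsilon^2\|(I-P_1^{(m)})y\|^2
\]
for all $y\in L^2(e_m\mathcal Me_m)$. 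Lemma~\ref{l27} applied inside $e_m\mathcal Me_m$ then gives $\sum_{k=1}^N\|(e_mA_ke_m)y\|^2\geq\varepsilon^2\|y\|^2$ for every $y\in e_mL^2(\mathcal M)e_m$.

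Finally, for fixed $x\in L^2(\mathcal M)$ take $y=e_mxe_m\in e_mL^2(\mathcal M)e_m$; since $(e_mA_ke_m)(e_mxe_m)=e_mA_k(e_mxe_m)$ and $\|e_m\|\leq 1$,
\[
\sum_{k=1}^N\|A_k(e_mxe_m)\|^2\;\geq\;\sum_{k=1}^N\|e_mA_k(e_mxe_m)\|^2\;=\;\sum_{k=1}^N\|(e_mA_ke_m)(e_mxe_m)\|^2\;\geq\;\varepsilon^2\|e_mxe_m\|^2 .
\]
Now $e_m\uparrow I$, so $e_mxe_m\to x$ in $L^2(\mathcal M)$ (e.g. because $\mbox{tr}(x^*x(I-e_m))\to0$ and $\mbox{tr}(xx^*(I-e_m))\to0$ by normality of the functionals they define on $\mathcal M$), and hence $A_k(e_mxe_m)\to A_kx$ in $L^2(\mathcal M)$. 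Letting $m\to\infty$ yields (\ref{th0}), and the proof is finished by invoking Theorem~\ref{t1} as explained above. I expect the middle step to be the main obstacle: one must check carefully that $e_m\mathfrak Ae_m$ is the nest subalgebra attached to the truncated nest, that $P_1^{(m)}$ is indeed the restriction of $P_1$ under the identification $L^2(e_m\mathcal Me_m)=e_mL^2(\mathcal M)e_m$, and --- the computational core --- that $(I-P_1)A_ky$ stays inside $e_mL^2(\mathcal M)e_m$ whenever $y$ does, which is exactly where the upper-triangularity of the $A_k$ and the explicit form of $I-P_1$ enter.
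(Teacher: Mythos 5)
Your proposal is correct and follows essentially the same route as the paper: compress to the two-sided corners $e_m\mathcal Me_m$ (the paper uses $Q_{m,n}=Q_n-Q_{-m}$, which is the same idea), check that the hypothesis and the projection $I-P_1$ restrict to the corner, apply Lemma \ref{l27} there, pass to the limit $m\to\infty$ to obtain the global bound (\ref{th0}), and then invoke Theorem \ref{t1} with $P=P_1$. The only cosmetic differences are your symmetric truncation and the use of two-sided compressions $e_mxe_m$ in the limiting step, versus the paper's $A_kQ_{m,n}x$ with left truncation.
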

\begin{proof}
Without loss of generality, we may assume that $L=\{n: -\infty\leq n\leq +\infty\}$ and  $Q_n<Q_{n+1}$ for all $n\in L$. We also may take $\alpha=1$.  For any positive integers $m$ and $n$,  the projection  $Q_{m,n}=Q_n-Q_{-m}\in \mathcal Z(\mathfrak D)$ is a semi-invariant projection of $\mathfrak A$ such that
$Q_{m,n}\mathfrak AQ_{m,n}\subseteq Q_{m,n}\mathcal MQ_{m,n}$ is a nest subalgebra  with  the  finite nest $\mathcal N_{m,n}=\{Q_{m,n}Q_k: -m\leq k\leq n\}$. It is elementary that $Q_{m,n}\mathfrak AQ_{m,n}\subseteq \mathfrak A$ is also a maximal  subdiagonal  algebra with respect to $\Phi|_{Q_{m,n}\mathcal MQ_{m,n}}$ with $\left(Q_{m,n}\mathfrak AQ_{m,n}\right)_0=Q_{m,n}\mathfrak A_0Q_{m,n}$.  Thus we have
 $$L^2(Q_{m,n}\mathcal MQ_{m,n})=Q_{m,n}L^2(\mathcal M)Q_{m,n},  \mbox{ and  } H^2_0(Q_{m,n}\mathfrak AQ_{m,n})=Q_{m,n}H^2_0(\mathfrak A)Q_{m,n},$$ that is,  $Q_{m,n}P_1$ is just the projection $P_{(m,n)1}$ from $L^2(Q_{m,n}\mathcal MQ_{m,n})$ onto $H^2_0(Q_{m,n}\mathfrak AQ_{m,n})$. In particular, $P_{(m,n)1}=P_1|_{Q_{m,n}L^2(\mathcal M)Q_{m,n}}$.  Let $A_k(m,n)=Q_{m,n}A_kQ_{m,n}$,  $1\leq k\leq N$. Then $A_k(m,n)\in Q_{m,n}\mathfrak AQ_{m,n}$ for any $1\leq k\leq N$. By (\ref{ft281}), we have
 \begin{equation*}
\sum\limits_{k=1}^N\|(I-P_{(m,n)1})A_k(m,n)x\|^2\geq \varepsilon^2 \|(I-P_{(m,n)1})x\|^2, \forall x\in Q_{m,n}L^2(\mathcal M)Q_{m,n}.
\end{equation*}
It follows  from Lemma \ref{l27} that
\begin{equation}\label{ft282}
\sum\limits_{k=1}^N\|Q_{m,n}A_kQ_{m,n}x\|^2\geq \varepsilon^2 \|x\|^2, \forall x\in Q_{m,n}L^2(\mathcal M)Q_{m,n}.
\end{equation}
 Thus by (\ref{ft282}),
\begin{align}\label{ft283}
 \sum\limits_{k=1}^N\|A_kQ_{m,n}x\|^2\geq  \sum\limits_{k=1}^N\|Q_{m,n}A_kQ_{m,n}x\|^2
\geq \varepsilon^2 \|Q_{m,n}x\|^2, \forall x\in L^2(\mathcal M).
\end{align}
  Since  $\lim\limits_{m,n\to+\infty}Q_{m,n}x=x$  for any $x\in L^2(\mathcal M)$,   we have
  \begin{align}\label{ft284}
 \sum\limits_{k=1}^N\|A_k x\|^2
\geq \varepsilon^2 \|x\|^2, \forall x\in L^2(\mathcal M)
\end{align} by letting $m,n\to+\infty$ in (\ref{ft283}). By (\ref{ft281}) and (\ref{ft284}), operators $\{A_k:1\leq k\leq N\}$ satisfy conditions of  Theorem \ref{t1}, and therefore there exist $\{B_k:1\leq k \leq N\}\subseteq\mathfrak A$ with
$\|B_k\|\leq 4N\varepsilon^{-3}$ such that
$\sum\limits_{k=1}^NB_kA_k=I.$
\end{proof} We next give a noncommutative  operator-theoretic variant of the Corona theorem for this nest subalgebra.
\begin{corollary}\label{c29}
Assume that $\{A_1,A_2,\cdots, A_N\}\subseteq \mbox{alg}_{\mathcal M}\mathcal N$.
 If there exists a positive number $\varepsilon$ such that
\begin{equation}\label{c291}
\sum\limits_{k=1}^N\|T^*_{A_k}x\|^2\geq \varepsilon^2 \|x\|^2, \forall x\in H^2,
\end{equation}  then there exist operators $\{B_k:1\leq k\leq N\}\subseteq\mathfrak A$
 such that
 \begin{equation*}
 \sum\limits_{k=1}^NA_kB_k=I
 \end{equation*}  with $\|B_k\|\leq 4N\alpha\varepsilon^{-3}$.
\end{corollary}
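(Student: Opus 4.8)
The plan is to derive Corollary~\ref{c29} directly from Theorem~\ref{t28}, applied not to $\mathfrak A$ itself but to the nest subalgebra $\mathfrak A^{*}$ and the family $\{A_k^{*}:1\leq k\leq N\}$, and then to pass back to $\mathfrak A$ by taking adjoints. First I would observe that $\mathfrak A^{*}$ is again a nest subalgebra of $\mathcal M$ of the type covered by Theorem~\ref{t28}: if $\mathfrak A=\mbox{alg}_{\mathcal M}\mathcal N$ with $\mathcal N=\{Q_n:n\in L\}$, then
\[
\mathfrak A^{*}=\{B\in\mathcal M:\ Q_nB(I-Q_n)=0,\ \forall n\in L\}=\mbox{alg}_{\mathcal M}\mathcal N^{\perp},
\]
where $\mathcal N^{\perp}=\{I-Q_n:n\in L\}$, with the order inherited from $\mathcal M$, is order-isomorphic to one of the lattices $(1)$--$(4)$; moreover $\mathfrak A^{*}$ is a maximal subdiagonal algebra with respect to the same $\Phi$, with diagonal $\mathfrak D$, and $\{A_k^{*}\}\subseteq\mathfrak A^{*}$.

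The second, slightly delicate, step is to rewrite the hypothesis (\ref{c291}) in the exact form of hypothesis (\ref{ft281}) of Theorem~\ref{t28} for $\mathfrak A^{*}$. Recall that $(T_{A_k})^{*}=T_{A_k^{*}}$ and $T_{A_k^{*}}x=P_0(A_k^{*}x)$ for $x\in H^2$, so (\ref{c291}) says $\sum_{k=1}^N\|P_0A_k^{*}x\|^2\geq\varepsilon^2\|x\|^2$ for all $x\in H^2=P_0L^2(\mathcal M)$. Since $A_k\in\mathfrak A$ we have $(I-P_0)A_kP_0=0$, whence, taking adjoints, $P_0A_k^{*}(I-P_0)=0$, that is, $P_0A_k^{*}=P_0A_k^{*}P_0$. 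Consequently, for an arbitrary $x\in L^2(\mathcal M)$ one has $P_0A_k^{*}x=P_0A_k^{*}(P_0x)$, and applying (\ref{c291}) to $P_0x\in H^2$ yields
\[
\sum_{k=1}^N\|P_0A_k^{*}x\|^2\geq\varepsilon^2\|P_0x\|^2,\qquad\forall x\in L^2(\mathcal M).
\]
Now, as recalled in Section~2 just before the present corollary, the projection $\tilde{P_1}$ of $L^2(\mathcal M)$ onto $(H_0^2)^{*}=H_0^2(\mathfrak A^{*})$ equals $I-P_0$. Hence the displayed inequality is precisely
\[
\sum_{k=1}^N\|(I-\tilde{P_1})A_k^{*}x\|^2\geq\varepsilon^2\|(I-\tilde{P_1})x\|^2,\qquad\forall x\in L^2(\mathcal M),
\]
which is hypothesis (\ref{ft281}) of Theorem~\ref{t28} for the nest subalgebra $\mathfrak A^{*}$ and the operators $\{A_k^{*}\}$.

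The final step is routine: Theorem~\ref{t28} produces $\{C_k:1\leq k\leq N\}\subseteq\mathfrak A^{*}$ with $\sum_{k=1}^N C_kA_k^{*}=I$ and $\|C_k\|\leq 4N\alpha\varepsilon^{-3}$, where $\alpha=\max_{k}\|A_k\|=\max_{k}\|A_k^{*}\|$. Putting $B_k=C_k^{*}\in\mathfrak A$ and taking adjoints of this identity gives $\sum_{k=1}^N A_kB_k=I$ with $\|B_k\|\leq 4N\alpha\varepsilon^{-3}$, which is the assertion. I do not expect an essential obstacle: all the analytic content is already carried by Theorem~\ref{t28} (and through it by Lemma~\ref{l27} and Theorem~\ref{t1}). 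The only points needing care are the bookkeeping among the four projections $P_0,P_1,\tilde{P_0},\tilde{P_1}$ — in particular recognizing that the left-Toeplitz condition (\ref{c291}) for $\mathfrak A$ is exactly the $(I-\tilde{P_1})$-condition for $\mathfrak A^{*}$ — and the verification that $\mathfrak A^{*}$ stays within the class of nest subalgebras handled by Theorem~\ref{t28}; both are elementary.
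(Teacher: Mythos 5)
Your proposal is correct and follows essentially the same route as the paper: pass to the adjoint nest subalgebra $\mathfrak A^{*}$ with nest $\{I-Q_n\}$, use $I-\tilde{P_1}=P_0$ together with $P_0A_k^{*}=P_0A_k^{*}P_0$ to recognize the Toeplitz hypothesis (\ref{c291}) as hypothesis (\ref{ft281}) for $\{A_k^{*}\}$, invoke Theorem \ref{t28}, and take adjoints. Your handling of the extension of the inequality from $x\in H^2$ to all $x\in L^2(\mathcal M)$ is in fact slightly more explicit than the paper's, but it is the same argument.
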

\begin{proof}
Note that $(\mbox{alg}_{\mathcal M}\mathcal N)^*$ is also a nest subalgebra with nest $\{(I-Q_n):n\in L\}$. It is known that $I-\tilde{P_1}=P_0$.
Then  for any $x\in H^2=(I-\tilde{P_1})L^2(\mathcal M)$, \begin{align*}
&\ \ \ \ \sum\limits_{k=1}^N\|(I-\tilde{P_1})A_k^*x\|^2=\sum\limits_{k=1}^N\|(I-\tilde{P_1})A_k^*(I-\tilde{P_1})x\|^2
\\
&=\sum\limits_{k=1}^N\|P_0A_k^*x\|^2=\sum\limits_{k=1}^N\|T_{A_k}^*x\|^2\geq
\varepsilon^2\|x\|^2.\end{align*}
It follows from Theorem \ref{t28} that  there exist $\{B_k^*:1\leq k\leq N\}\subseteq (\mbox{alg}_{\mathcal M}\mathcal N)^*$
such that $\sum\limits_{k=1}^NB_k^*A_k^*=I$, that is, $\sum\limits_{k=1}^NA_kB_k=I$.\end{proof}
\begin{remark} \label{r35}
By symmetry, if we replace (\ref{ft281})   by (\ref{c251})  in Theorem \ref{t28}, then $\sum\limits_{k=1}^N A_kB_k=I$ for some $\{B_k:1\leq k \leq N\}\subseteq \mbox{alg}_{\mathcal M}\mathcal N$. Similarly, if we replace left Toeplitz operator $T_{A_k}^*$ by  right Toeplitz operator $t_{A_k}^*$ in Corollary \ref{c29}, we may also  have that  $\sum\limits_{k=1}^N B_kA_k=I$  for some $\{B_k:1\leq k \leq N\}\subseteq \mbox{alg}_{\mathcal M}\mathcal N$.
\end{remark}
\section{Type 1 subdiagonal algebras are analytic operator algebras with periodic flows}
 Assume that $\mathfrak A$ is a maximal  subdiagonal algebra. A  closed subspace $\mathfrak M\subseteq L^2(\mathcal M)$ is said to be  a right(resp. left) invariant subspace of $\mathfrak A$ if $\mathfrak M\mathfrak A\subseteq\mathfrak M$(resp. $\mathfrak A\mathfrak M\subseteq\mathfrak M$). If it is both right and left invariant, then it is said to be a two side invariant subspace. It is well known    that for any right invariant subspace $\mathfrak M$,  there exist type 1  and type 2 right invariant subspaces $\mathfrak M_1$ and $\mathfrak M_2$ such that
 $\mathfrak M=\mathfrak M_1\oplus^{col}\mathfrak M_2$. If $\mathfrak M$  is of     type 1, then $\mathfrak M=\oplus^{col}V_{k\geq1}H^2$  for a family of partial isometries
 $\mathcal V=\{V_k:k\geq 1\}$  with $V^*_kW_l=0$ for $k\not=l$ and $V_k^*V_k\in\mathfrak D$ for any $k,l\geq1$(cf. \cite[Theorem 2.1,  Corollary 2.3]{ble} and \cite[Theorem 2.3, Corollary 2.6]{lab}). We call such a family of partial isometries a column orthogonal  family of partial isometries. By symmetry, we may also get similar results for left invariant subspaces. We recall that a maximal subdiagonal algebra $\mathfrak A$  is of type 1 if any nonzero  right(resp. left) invariant subspace of $\mathfrak A$ in $H^2$ is of   type 1(cf.\cite[Definition 2.1]{ji3}).
 In particular,   there exists a column orthogonal  family of   partial isometries $\mathcal U=\{U_{n} : n \geq 1\}$ in $\mathcal M$   such that $H_{0}^{2}=\oplus_{n\geq 1}^{col}U_{i}H^{2}$ by \cite[formula (2.2)]{ji3}. Moreover, if  $\mathfrak A_0^n$ is the $\sigma$-weakly closed ideal generated by $\{A_1A_2\cdots A_n: A_j\in\mathfrak A_0\}$ of $\mathfrak A$, then  $\mathfrak A$ is of type 1 if and only if $\cap_{n\geq1}\mathfrak A_0^n=\{0\}$ by \cite[Theorem 3.1]{jj}.

  On the other hand, if $\alpha=\{\alpha_t:t\in\mathbb R\}$ is a periodic flow on $\mathcal M$ and   $H^{\infty}(\alpha)$ is  the analytic operator algebra determined by  $\alpha$(cf.\cite{loe,saito,solel1,solel2}), then $H^{\infty}(\alpha)$ is of type 1(cf. \cite[Remark 3.1]{ji4}). In this section,  we prove that the converse does hold.  We then  give a form decomposition  for a type 1 subdiagonal algebra and  establish a conditional   expectation from $\left(L(\mathfrak D)\right)^{\prime}$(resp. $\left(R(\mathfrak D)\right)^{\prime}$) onto
$R(\mathcal M)$(resp. $L(\mathcal M)$) for  a type 1 subdiagonal algebra with  the $(1,1)$-form,  which  is a key tool   to get a noncommutative operator-theoretical variant of the Corona theorem for these algebras in next section.

        We  note that the modular automorphism group $\sigma^{\varphi}=\{\sigma_t^{\varphi}:t\in\mathbb R\}$  associated with $\varphi$  can be be extended to a strongly continuous
one-parameter group of surjective isometries  on $L^p(\mathcal M)$, that is,  $\sigma^{\varphi}=\{\sigma_t^{\varphi}:t\in\mathbb R\}$ is also an automorphism group of  $L^p(\mathcal M)$. We refer readers to  \cite[Section 10]{kos3} for details.

As in \cite{ji3}, put $\mathfrak M_0=H^2$ and  $\mathfrak M_n=[H^2\mathfrak A_0^n]=[\mathfrak A_0^n H^2]$ for $n\geq1 $. It is known that
$\mathfrak M_n$ is two side invariant and also  $\sigma^{\varphi}$ invariant with $\mathfrak M_{n+1}=[\mathfrak M_n\mathfrak A_0]=[\mathfrak A_0\mathfrak M_n]$. Thus  $W_n=\mathfrak M_n\ominus \mathfrak M_{n+1}$ is  both right and left  wandering subspace of $\mathfrak M_n$ with $W_0=L^2(\mathfrak D)$. Let  $M_n=\{A\in\mathcal M: Ah_0^{\frac12}\in W_n\}$ for any $n\geq 0$. Then $M_0=\mathfrak D$ and $M_n$ is a $\sigma$-weakly closed subspace of $\mathfrak A_0$ for any $n\geq1$. It is elementary that $M_n$ is a $\sigma$-weakly closed  $\mathfrak D$ bi-module in $\mathcal M$.
Before presenting the following result, we first remark on an elementary fact: If $S\subseteq \mathcal M$ is a $\sigma$-weakly closed subspace  that is  invariant under $\sigma^{\varphi}$(i.e., $\sigma^{\varphi}_t(S)=S$, $\forall t\in\mathbb R$), then $S\cap \mathcal T$ is $\sigma$-weakly dense in $S$. Here, $\mathcal T$ denotes  the set of all entire elements in $\mathcal M$, defined as  those elements  $X\in\mathcal M$ for which the map $t\to\sigma_t^{\varphi}(X)$ admits an extension to an $\mathcal M$-valued entire
function on $\mathbb C$. In fact, for any $X\in S$, define $X_r=\sqrt{\frac{r}{\pi}}\int\limits_{\mathbb R}e^{-rt^2}\sigma_t^{\varphi}(X)dt$(for $r>0$)  as in formula $(2.2)$  of \cite{ji1}. Then $X_r\in S\cap  \mathcal T$, and $X_r\to X$ as $r\to \infty.$
\begin{lemma}\label{l41} Both $W_n$ and $M_n$ are  invariant under $\sigma^{\varphi}$  and  for any $n\geq1$,
 $$W_n=[M_nh_0^{\frac12}]=[M_nL^2(\mathfrak D)]=
[L^2(\mathfrak D)M_n]=[h_0^{\frac12}M_n].$$
\end{lemma}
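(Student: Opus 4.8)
The plan is to establish the two kinds of invariance first and then the four equalities, reducing everything to already-available structure. First I would prove that $W_n$ is $\sigma^\varphi$-invariant: this follows because $\mathfrak M_n = [H^2\mathfrak A_0^n]$ is $\sigma^\varphi$-invariant (as recorded in the text, since $H^2$ and $\mathfrak A_0^n$ are individually carried to themselves by the one-parameter group on $L^2(\mathcal M)$ and on $\mathcal M$ respectively), hence $\mathfrak M_{n+1}$ is too, and $W_n = \mathfrak M_n \ominus \mathfrak M_{n+1}$; since each $\sigma^\varphi_t$ is a surjective isometry of $L^2(\mathcal M)$ preserving both $\mathfrak M_n$ and $\mathfrak M_{n+1}$, it preserves their orthogonal difference. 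For $M_n$, I would use the intertwining relation $\sigma_t^\varphi(A) h_0^{1/2} = h_0^{it} A h_0^{1/2} h_0^{-it} \cdot h_0^{it}$; more cleanly, $\sigma_t^\varphi(A)h_0^{1/2} = h_0^{it}(A h_0^{1/2})h_0^{-it} = \sigma_t^\varphi(Ah_0^{1/2})$ where on the right $\sigma^\varphi$ denotes the extension to $L^2(\mathcal M)$. So $A \in M_n \iff Ah_0^{1/2} \in W_n \iff \sigma_t^\varphi(Ah_0^{1/2}) \in W_n$ (by the invariance of $W_n$ just proved) $\iff \sigma_t^\varphi(A)h_0^{1/2} \in W_n \iff \sigma_t^\varphi(A) \in M_n$, giving $\sigma_t^\varphi(M_n) = M_n$.

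Next I would address the chain of equalities. The inclusions $[M_n h_0^{1/2}] \subseteq [M_n L^2(\mathfrak D)]$ and $[h_0^{1/2} M_n] \subseteq [L^2(\mathfrak D) M_n]$ hold because $h_0^{1/2} \in L^2(\mathfrak D)$ (indeed $h_0^{1/2}$ is affiliated with $L(\mathfrak D)$ since $\varphi \circ \Phi = \varphi$, so its spectral projections lie in $\mathfrak D$, hence $h_0^{it}\in\mathfrak D$ and $h_0^{1/2}\in L^2(\mathfrak D)$). For the reverse inclusion $[M_n L^2(\mathfrak D)] \subseteq W_n$ (and likewise $[L^2(\mathfrak D)M_n]\subseteq W_n$), I would use the approximation remark stated just before the lemma together with density of $[\mathfrak D h_0^{1/2}] = L^2(\mathfrak D)$: an element of $L^2(\mathfrak D)$ is a limit of $d h_0^{1/2}$ with $d \in \mathfrak D$, so $[M_n L^2(\mathfrak D)]$ is the closed span of $\{A d h_0^{1/2} : A \in M_n, d \in \mathfrak D\}$; since $M_n$ is a $\sigma$-weakly closed $\mathfrak D$-bimodule, $Ad \in M_n$, so each such vector is $Ad\, h_0^{1/2} \in M_n h_0^{1/2} = W_n'$ where $[M_n h_0^{1/2}]\subseteq W_n$ because $W_n = [M_n h_0^{1/2}]$ is essentially the definition of $M_n$ (every vector $Ah_0^{1/2}$ with $A\in M_n$ lies in $W_n$ and these span $W_n$ — the spanning will need an argument). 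For the two-sided version, the key symmetric fact $W_n = [\mathfrak A_0 \mathfrak M_{n-1}] \ominus \cdots$ both as a left and right wandering subspace (already noted in the text) lets me run the mirror argument.

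The part needing real care — and which I expect to be the main obstacle — is showing that $M_n h_0^{1/2}$ actually has dense span in $W_n$, i.e.\ that $W_n = [M_n h_0^{1/2}]$ rather than merely $[M_n h_0^{1/2}] \subseteq W_n$. The inclusion is immediate from the definition of $M_n$, but for density I would argue as follows: $W_n$ is $\sigma^\varphi$-invariant, so by the preliminary remark its intersection with entire vectors is dense; more usefully, $W_n = [H^2 \mathfrak A_0^n] \ominus [H^2\mathfrak A_0^{n+1}]$ is a $\sigma$-weakly-closed-subspace-times-$h_0^{1/2}$ type object. Concretely, one shows $[\mathfrak A_0^n h_0^{1/2}]$ spans $\mathfrak M_n$ (this is in the definitions: $\mathfrak M_n = [\mathfrak A_0^n H^2] = [\mathfrak A_0^n h_0^{1/2}\mathfrak A] \supseteq [\mathfrak A_0^n h_0^{1/2}]$, with equality after taking closed span using $\mathfrak M_n\mathfrak A\subseteq\mathfrak M_n$), and then the orthogonal projection from $\mathfrak M_n$ onto $W_n$ applied to $\mathfrak A_0^n h_0^{1/2}$ yields vectors of the form $(\text{stuff})h_0^{1/2}$; identifying the set of operators $A$ with $Ah_0^{1/2}$ in the range of this projection as exactly $M_n$ requires that the projection onto $W_n$ commutes with right multiplication by $h_0^{it}$ (true, by $\sigma^\varphi$-invariance of $W_n$ and $\mathfrak M_n$), which forces the range to be of the stated module form. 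Once density of $[M_n h_0^{1/2}]$ in $W_n$ is in hand, the remaining three equalities follow by combining it with the $\mathfrak D$-bimodule property of $M_n$, density of $[\mathfrak D h_0^{1/2}]$ and $[h_0^{1/2}\mathfrak D]$ in $L^2(\mathfrak D)$, and the left/right symmetry of $W_n$.
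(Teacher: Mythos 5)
Your handling of the invariance statements and of the identifications that only use the $\mathfrak D$-bimodule property (e.g.\ $[M_nL^2(\mathfrak D)]=[M_n\mathfrak Dh_0^{\frac12}]=[M_nh_0^{\frac12}]$, since $L^2(\mathfrak D)=[\mathfrak Dh_0^{\frac12}]$) is fine and agrees with the paper. The genuine gap is precisely at the step you yourself single out: the density $W_n=[M_nh_0^{\frac12}]$. Your proposed argument --- project the bounded vectors $\mathfrak A_0^nh_0^{\frac12}$ from $\mathfrak M_n$ onto $W_n$ and claim the images are again of the form $Bh_0^{\frac12}$ with $B\in\mathcal M$, this being ``forced'' because the projection commutes with right multiplication by $h_0^{it}$ --- does not work. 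The projection onto $W_n$ commutes only with left and right multiplication by $\mathfrak D$ (and with $x\mapsto xh_0^{it}$), not with the right action of all of $\mathcal M$; commutation with such a small algebra does not imply that the projection carries bounded vectors ($\mathcal Mh_0^{\frac12}$) to bounded vectors, and nothing in your sketch identifies the projected set with $M_nh_0^{\frac12}$. Tellingly, you never use the type~1 hypothesis, which is exactly the input needed here. The paper closes this step by quoting \cite[Proposition 2.6]{ji3}: since $H_0^2=\oplus_{i\geq1}^{col}U_iH^2$ for the column orthogonal family $\mathcal U=\{U_i\}$, one has $W_n=\vee\{U_{i_1}\cdots U_{i_n}L^2(\mathfrak D)\}=\vee\{U_{i_1}\cdots U_{i_n}\mathfrak Dh_0^{\frac12}\}$, and each product $U_{i_1}\cdots U_{i_n}D$ with $D\in\mathfrak D$ lies in $M_n$ by the very definition of $M_n$; density of $M_nh_0^{\frac12}$ in $W_n$ is then immediate. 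Any repair of your argument would have to invoke some Beurling-type wandering-subspace decomposition of this kind rather than a projection argument.

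A secondary, smaller gap: the final equality $[M_nh_0^{\frac12}]=[h_0^{\frac12}M_n]$ does not follow from ``left/right symmetry of $W_n$'' alone, because $M_n$ is defined by the one-sided condition $Ah_0^{\frac12}\in W_n$. The paper's route is the one you should make explicit: $M_n$ is $\sigma$-weakly closed, $\sigma^{\varphi}$-invariant, hence $M_n\cap\mathcal T$ is $\sigma$-weakly dense in $M_n$ by the remark preceding the lemma, and for entire $X$ one has $h_0^{\frac12}X=\sigma^{\varphi}_{-i/2}(X)h_0^{\frac12}$ with $\sigma^{\varphi}_{-i/2}(X)\in M_n$ (and symmetrically), which yields $[h_0^{\frac12}M_n]=[M_nh_0^{\frac12}]$ and, with the bimodule property, $[L^2(\mathfrak D)M_n]=[h_0^{\frac12}M_n]$. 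You gesture at the entire-element remark but do not actually deploy it where it is needed.
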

\begin{proof}
 It is known  from  \cite[Section 10]{kos3} that   $\sigma^{\varphi}$ may be extended to a strongly continuous
one-parameter group of surjective  isometries on $L^p(\mathcal M)$. In particular, When $p=2$,   $ \sigma_t^{\varphi}$ is a unitary operator on $L^2(\mathcal M)$ for all $t\in\mathbb R$. Since both $\mathfrak A$ and $\mathfrak A_0$ are $\sigma^{\varphi}$  invariant by \cite[Theorem 2.4]{jos1}, we have   $\mathfrak M_n$ is $\sigma^{\varphi}$  invariant. It follows  that
$W_n $ is $\sigma^{\varphi}$ invariant for any $n\geq1$. Thus $M_n$ is also $\sigma^{\varphi}$ invariant for any $n\geq0$. By \cite[Proposition 2.6]{ji3},
\begin{align*}
W_n&=\vee\{U_{i_1}U_{i_2}\cdots U_{i_n}L^2(\mathfrak D): U_{i_k}\in\mathcal U \}\\
\nonumber &=\vee\{U_{i_1}U_{i_2}\cdots U_{i_n}\mathfrak Dh_0^{\frac12}: U_{i_k}\in\mathcal U \}.
\end{align*}
Then $\vee\{U_{i_1}U_{i_2}\cdots U_{i_n}\mathfrak D: U_{i_k}\in\mathcal U \}\subseteq M_n$. Hence, $[M_nh_0^{\frac12}]=W_n$. Note that $M_n$ is a bi-module of $\mathfrak D$ and
$M_n\cap\mathcal T$ is $\sigma$-weakly dense in $M_n$. We have $W_n=[M_nh_0^{\frac12}]=[M_nL^2(\mathfrak D)]=[L^2(\mathfrak D)M_n]=[h_0^{\frac12}M_n]$.
\end{proof}
For $n\geq 1$, since $\mathfrak A$ is of type 1, we have
\begin{equation}\label{41}
\mathfrak M_n=\oplus_{m\geq 1}^{col}U_{n,m}H^2
\end{equation} for a column orthogonal family  of partial isometries $\{U_{n,m}:m\geq1\}\subseteq \mathcal M$.  Note that $U_{1,m}=U_m$ for any  $m\geq1$.  In this case,
 we have
  \begin{equation}\label{42}
  W_n=\oplus_{m\geq1}^{col}U_{n,m}L^2(\mathfrak D).
  \end{equation}  Put $K=\sup\{n: \mathfrak M_n\not=\{0\}\}$. Then $K$ is finite or $+\infty$. For any $ n\geq1 $($n\leq K$ if $K<+\infty$ ),
   we define $\mathfrak M_{-n}=\mathfrak M_n^*$,   $M_{-n}=M_n^*$. Then $[M_{-n}h_0^{\frac12}]=[(H^2)^*(\mathfrak A_0^*)^{n}]\ominus [(H^2)^*(\mathfrak A_0^*)^{n+1}]$ is the wandering subspace of  $\mathfrak M_n^*$  for $\mathfrak A_0^*$. By considering  the type 1 subdiagonal algebra $\mathfrak A^*$, We again have that
 \begin{equation}\label{43}
\mathfrak M_{-n}=\oplus_{m\geq 1}^{col}U_{-n,m}(H^2)^*
\end{equation} for a column orthogonal family  of isometries $\{U_{-n,m}:m\geq1\}$ and
   \begin{equation}\label{44}
  W_{-n}=\oplus_{m\geq1}^{col}U_{-n,m}L^2(\mathfrak D).
  \end{equation}
   Now put $U_{0,1}=I$. Then $\mathfrak M_0=H^2=U_{01}H^2$ and for any integer $n\in\mathbb Z$($-K\leq n\leq K$ if $K<+\infty$), there exists a column orthogonal family of partial isometries $\{U_{n,m}: m\geq1\}$ such that
  $W_n=\oplus_{m\geq1}^{col}U_{n,m}L^2(\mathfrak D)$.  If $K<+\infty$, then we may assume that $W_n=\{0\}$ for any $|n|>K$. We now have
  \begin{equation}\label{45} H^2=\oplus_{n\geq0}W_n \mbox{ and }
L^2(\mathcal M)=\oplus_{n\in\mathbb Z} W_n .
\end{equation}
\begin{theorem}\label{t42} Let $\mathfrak A$ be a type 1 subdiagonal algebra. Then $\mathfrak A=\vee\{M_n:n\geq 0\}$ and
$\mathcal M=\vee\{M_n: n\in\mathbb Z\}$.
\end{theorem}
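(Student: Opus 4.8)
I would handle the two inclusions separately; one is immediate. Since $M_0=\mathfrak D\subseteq\mathfrak A$ and, by the definitions recalled above, $M_n\subseteq\mathfrak A_0\subseteq\mathfrak A$ for $n\geq1$ while $M_n\subseteq\mathcal M$ for all $n$, we have $\vee\{M_n:n\geq0\}\subseteq\mathfrak A$ and $\vee\{M_n:n\in\mathbb Z\}\subseteq\mathcal M$; the content is the reverse inclusions. To get them I would build a $2\pi$-periodic flow $\alpha=\{\alpha_\theta\}$ on $\mathcal M$ that leaves $\mathfrak A$ invariant and whose spectral subspaces $\mathcal M_n=\{A\in\mathcal M:\alpha_\theta(A)=e^{in\theta}A\ \ \forall\theta\}$ satisfy $\mathcal M_n\subseteq M_n$, and then recover an arbitrary $A\in\mathfrak A$ from its $\alpha$-Fourier series by Fej\'er summation.

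To construct $\alpha$, start from the decomposition $L^2(\mathcal M)=\oplus_{n\in\mathbb Z}W_n$ of (\ref{45}) and define a strongly continuous one-parameter unitary group $\{u_\theta\}$ on $L^2(\mathcal M)$ by $u_\theta\xi=e^{in\theta}\xi$ for $\xi\in W_n$; it is $2\pi$-periodic, and by Lemma \ref{l41} (using $W_n=[L^2(\mathfrak D)M_n]=[M_nL^2(\mathfrak D)]$) the projection $E_n$ of $L^2(\mathcal M)$ onto $W_n$ commutes with $L(\mathfrak D)$ and $R(\mathfrak D)$, so $u_\theta$ commutes with $L(\mathfrak D)$, $R(\mathfrak D)$ and with $P_0=\sum_{n\geq0}E_n$. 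The crucial step --- and, I expect, the main obstacle --- is that $\{u_\theta\}$ normalizes $L(\mathcal M)$. For this I would use the column-orthogonal family $\mathcal U=\{U_k:k\geq1\}$ with $H_0^2=\oplus_{k}^{col}U_kH^2$ and the description $W_n=\overline{\mathrm{span}}\{U_{i_1}\cdots U_{i_n}dh_0^{\frac12}:d\in\mathfrak D\}$ for $n\geq0$ (from \cite{ji3}, as in the proof of Lemma \ref{l41}), together with its adjoint form for $n<0$ and the relations $U_k^*U_l=0$ $(k\neq l)$, $U_k^*U_k\in\mathfrak D$; these should give $U_kW_n\subseteq W_{n+1}$ and $U_k^*W_n\subseteq W_{n-1}$ for every $n\in\mathbb Z$, hence $u_\theta L_{U_k}u_{-\theta}=e^{i\theta}L_{U_k}$, $u_\theta L_{U_k^*}u_{-\theta}=e^{-i\theta}L_{U_k^*}$, and $u_\theta L_du_{-\theta}=L_d$ for $d\in\mathfrak D$. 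Since $\mathfrak A$ is of type 1, $\bigcap_{n\geq1}\mathfrak A_0^n=\{0\}$ by \cite{jj}, and I would use this to show that $\mathcal M$ is the von Neumann algebra generated by $\mathfrak D\cup\mathcal U\cup\mathcal U^*$ --- for instance via $M_1=\overline{\sum_kU_k\mathfrak D}$, so that $\sigma_t^\varphi(U_k)\in M_1$ lies in this algebra and makes it $\sigma^\varphi$-invariant, after which $L^2$-density coming from (\ref{45}) forces it to equal $\mathcal M$. Then $\{u_\theta\}$ carries a generating set of $L(\mathcal M)$ back into $L(\mathcal M)$, so it normalizes $L(\mathcal M)$ and $L_{\alpha_\theta(A)}:=u_\theta L_Au_{-\theta}$ defines the desired flow.

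Finally, since $u_\theta$ commutes with $P_0$ and the criterion $(I-P_0)AP_0=0$ characterizes membership in $\mathfrak A$, we get $\alpha_\theta(\mathfrak A)=\mathfrak A$; evaluating $u_\theta L_A=e^{in\theta}L_Au_\theta$ on the cyclic vector $h_0^{\frac12}\in W_0$ gives $\mathcal M_n\subseteq M_n$ for all $n$, and $\mathfrak D\subseteq\mathcal M_0$. For $A\in\mathfrak A$ set $A_k=\frac1{2\pi}\int_0^{2\pi}e^{-ik\theta}\alpha_\theta(A)\,d\theta\in\mathcal M_k\cap\mathfrak A$. For $k<0$, $A_k\in\mathcal M_k\subseteq M_k=M_{-k}^*\subseteq\mathfrak A_0^*$, so $A_k\in\mathfrak A\cap\mathfrak A^*=\mathfrak D\subseteq\mathcal M_0$, and $\mathcal M_k\cap\mathcal M_0=\{0\}$ forces $A_k=0$; for $k\geq0$, $A_k\in\mathcal M_k\subseteq M_k$. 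Hence the Fej\'er means $\sigma_N(A)=\sum_{k=0}^N\bigl(1-\tfrac{k}{N+1}\bigr)A_k$ lie in $\mathrm{span}\{M_0,\dots,M_N\}$ and $\sigma_N(A)\to A$ in the $\sigma$-weak topology by the standard theory of periodic flows (\cite{loe,saito,solel1,solel2}), so $A\in\vee\{M_n:n\geq0\}$. This yields $\mathfrak A=\vee\{M_n:n\geq0\}$, and then $\mathcal M=\vee\{\mathfrak A,\mathfrak A^*\}=\vee\{M_n:n\in\mathbb Z\}$ because $M_n^*=M_{-n}$. The real work is thus the normalization claim: the uniform-in-$n$ grading relations $U_kW_n\subseteq W_{n+1}$ (equivalently $M_nM_m\subseteq M_{n+m}$) and the generation of $\mathcal M$ by $\mathfrak D\cup\mathcal U\cup\mathcal U^*$, which is exactly where the hypothesis $\bigcap_{n\geq1}\mathfrak A_0^n=\{0\}$ is used.
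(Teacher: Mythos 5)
Your proposal is correct in outline but follows a genuinely different, and much heavier, route than the paper. The paper's own proof is essentially a citation: since $U_{i_1}U_{i_2}\cdots U_{i_n}\mathfrak D\subseteq M_n$, the identity $\mathfrak A=\vee\{M_n:n\geq0\}$ is immediate from \cite[Theorem 2.7]{ji3} (which says $\mathfrak A$ is the $\sigma$-weakly closed span of such words together with $\mathfrak D$), and the second identity follows from the $\sigma$-weak density of $\mathfrak A+\mathfrak A^*$ and $M_{-n}=M_n^*$ --- exactly your last line. What you do instead is reconstruct, ahead of time and in a non-circular order, the periodic-flow machinery that the paper only develops \emph{after} Theorem \ref{t42} (Lemmas \ref{l44}--\ref{l47} and Theorem \ref{t48}); note that the paper's Lemma \ref{l47} invokes Theorem \ref{t42} to get $\tilde\alpha_t(L(\mathcal M))=L(\mathcal M)$, so you rightly identified normalization of $L(\mathcal M)$ as the point that must be established independently, and your route (grading relations for the $W_n$, generation of $\mathcal M$ by $\mathfrak D\cup\mathcal U\cup\mathcal U^*$ via $\sigma^\varphi$-invariance and cyclicity of $h_0^{1/2}$, then Fej\'er summation of the $\alpha$-Fourier series) does avoid circularity. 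What the paper's proof buys is brevity, at the price of leaning on the earlier generation theorem; what yours buys is a self-contained argument that in effect also proves Theorem \ref{t48}.

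Two steps are asserted more quickly than they can be justified from what you list. First, the mixed-sign grading relations ($U_kW_n\subseteq W_{n+1}$ for $n<0$, equivalently $W_mU_k^*\subseteq W_{m-1}$) do \emph{not} follow from $U_k^*U_l=0$ and $U_k^*U_k\in\mathfrak D$ alone; one needs the additional fact $U_lL^2(\mathfrak D)U_k^*\subseteq L^2(\mathfrak D)$ (the paper's Lemma \ref{l44} uses precisely \cite[Lemma 3.1]{ji4} for this), whereas the same-sign cases and $U_k^*W_n\subseteq W_{n-1}$ for $n\geq1$ do follow from column orthogonality and the word description of $W_n$. Second, ``$L^2$-density forces $\mathcal N=\mathcal M$'' requires more than cyclicity of $h_0^{1/2}$: you need the $\sigma^\varphi$-invariance of $\mathcal N$ to produce a $\varphi$-preserving conditional expectation (Takesaki) and then argue $x=E(x)$ from $(x-E(x))h_0^{\frac12}\perp[\mathcal Nh_0^{\frac12}]=L^2(\mathcal M)$; you gesture at this but it should be said. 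With those two points filled in (and the type 1 hypothesis used, as you say, to guarantee the completeness of $L^2(\mathcal M)=\oplus_{n\in\mathbb Z}W_n$ in (\ref{45})), your argument goes through.
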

\begin{proof} Note that $\vee\{U_{i_1}U_{i_2}\cdots U_{i_n}\mathfrak D: U_{i_k}\in\mathcal U \}\subseteq M_n$.  By \cite[Theorem 2.7]{ji3}, $\mathfrak A=\vee\{M_n:n\geq 0\}$. The second follows from  the fact that $\mathfrak A+\mathfrak A^*$ is $\sigma$-weakly dense in $\mathcal M$.
\end{proof}
\begin{lemma}\label{l43}
Let $x\in W_n$ and  $x=U|x|$ the polar decomposition of $x$. Then $U\in M_n$ and $|x|\in L^2(\mathfrak D)$.
\end{lemma}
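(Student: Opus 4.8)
The plan is to use the internal column decomposition $W_n=\oplus_{m\ge 1}^{col}U_{n,m}L^2(\mathfrak D)$ to compute $x^{*}x$ explicitly, which immediately yields $|x|\in L^2(\mathfrak D)$, and then to reconstruct the partial isometry $U$ of the polar decomposition as a strongly convergent series built from the $U_{n,m}$ and elements of $\mathfrak D$.

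First I would put $e_{n,m}=U_{n,m}^{*}U_{n,m}\in\mathfrak D$. Since $U_{n,m}L^2(\mathfrak D)=U_{n,m}e_{n,m}L^2(\mathfrak D)$ and these subspaces of $L^2(\mathcal M)$ are pairwise orthogonal and span $W_n$, every $x\in W_n$ has a unique representation $x=\sum_{m\ge 1}U_{n,m}d_{m}$ with $d_{m}=e_{n,m}d_{m}\in L^2(\mathfrak D)$ and $\sum_{m}\|d_{m}\|_2^{2}=\|x\|_2^{2}$. Using the joint continuity of the product $L^2(\mathcal M)\times L^2(\mathcal M)\to L^1(\mathcal M)$ together with $U_{n,m}^{*}U_{n,l}=0$ for $m\ne l$, one gets $x^{*}x=\sum_{m}d_{m}^{*}d_{m}$, a series converging in the $L^1$-norm whose terms lie in the closed subspace $L^1(\mathfrak D)^{+}$; hence $x^{*}x\in L^1(\mathfrak D)^{+}$. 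Because the positive square root is compatible with the inclusion $L^1(\mathfrak D)\subseteq L^1(\mathcal M)$ (both being Haagerup spaces of the pair $\mathfrak D\rtimes_{\sigma^{\varphi}}\mathbb R\subseteq\mathcal N$, with $\mathfrak D$ the fixed-point algebra of the dual action $\theta$ restricted to $\mathfrak D\rtimes_{\sigma^{\varphi}}\mathbb R$), it follows that $|x|=(x^{*}x)^{1/2}\in L^2(\mathfrak D)^{+}$, which is the first assertion; in particular the support projection $e=U^{*}U$ of $|x|$ belongs to $\mathfrak D$.

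To recover $U$, let $d_{m}=W_{m}|d_{m}|$ be the polar decomposition of $d_{m}$ inside $L^2(\mathfrak D)$, so $W_{m}\in\mathfrak D$ and $|d_{m}|\in L^2(\mathfrak D)^{+}$. From $|d_{m}|^{2}=d_{m}^{*}d_{m}\le x^{*}x=|x|^{2}$ (the remaining summands being positive) there is a unique contraction $C_{m}\in\mathfrak D$ with $C_{m}e=C_{m}$ and $C_{m}|x|=|d_{m}|$, and the computation $\sum_{m}C_{m}^{*}C_{m}=|x|^{-1}\big(\sum_{m}|d_{m}|^{2}\big)|x|^{-1}=e$ on the range of $|x|$ shows $\sum_{m}C_{m}^{*}C_{m}=e$ as an increasing $\sigma$-strong limit, so in particular $\sum_{m}\|C_{m}\xi\|^{2}=\langle e\xi,\xi\rangle$ for every $\xi$. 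Setting $b_{m}=W_{m}C_{m}\in\mathfrak D$ gives $U_{n,m}d_{m}=U_{n,m}W_{m}|d_{m}|=U_{n,m}b_{m}|x|$, whence $x=\big(\sum_{m}U_{n,m}b_{m}\big)|x|$. As the $U_{n,m}$ have pairwise orthogonal ranges and $\sum_{m}\|U_{n,m}b_{m}\xi\|^{2}=\sum_{m}\|C_{m}\xi\|^{2}\le\|\xi\|^{2}$, the partial sums $\sum_{m\le N}U_{n,m}b_{m}$ are contractions converging strongly to an operator $V\in\mathcal M$ with $Ve=V$ and $x=V|x|$; uniqueness of the polar decomposition forces $V=U$. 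Finally, $U_{n,m}\in M_{n}$ since $U_{n,m}h_0^{1/2}\in U_{n,m}L^2(\mathfrak D)\subseteq W_{n}$, so $U_{n,m}b_{m}\in M_{n}$ by the right $\mathfrak D$-module property of $M_{n}$; since $M_{n}$ is $\sigma$-weakly closed and the bounded partial sums converge strongly, hence $\sigma$-weakly, to $U$, we conclude $U\in M_{n}$.

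I expect the main obstacle to be the bookkeeping in the third paragraph: verifying that the $W_{m}$ and $C_{m}$ genuinely lie in $\mathfrak D$ (rather than merely in $\mathcal N$ or $\mathcal M$), establishing the identity $\sum_{m}C_{m}^{*}C_{m}=e$, and pinning down the topology in which $\sum_{m}U_{n,m}b_{m}$ converges so that its limit is exactly the polar isometry of $x$. The compatibility of functional calculus, polar decomposition and square roots with the inclusions $L^{p}(\mathfrak D)\subseteq L^{p}(\mathcal M)$ that is used throughout is routine but must be invoked with care.
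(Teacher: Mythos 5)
Your proof is correct, but it follows a genuinely different route from the paper's. The paper obtains $|x|\in L^2(\mathfrak D)$ in one stroke by quoting the wandering-subspace theorem of \cite{lab} (which gives $|x|^2\in L^1(\mathfrak D)$ for any $x$ in the right wandering space $W_n$), and it gets $U\in M_n$ with no series construction at all: decomposing $[|x|\mathcal M]=[|x|\mathfrak A_0^*]\oplus[|x|\mathfrak D]\oplus[|x|\mathfrak A_0]$ and using that $U$ annihilates $[|x|\mathcal M]^{\perp}\supseteq L^2(\mathfrak D)\ominus[|x|\mathfrak D]$, it concludes $UL^2(\mathfrak D)=[x\mathfrak D]\subseteq W_n$, hence $Uh_0^{\frac12}\in W_n$ and $U\in M_n$. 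You instead exploit the type 1 column decomposition $W_n=\oplus_{m\geq1}^{col}U_{n,m}L^2(\mathfrak D)$ of (\ref{42}) twice: first to compute $x^*x=\sum_m d_m^*d_m\in L^1(\mathfrak D)^+$ (so $|x|\in L^2(\mathfrak D)$ without invoking \cite{lab}), and then to rebuild $U=\sum_m U_{n,m}b_m$ with $b_m\in\mathfrak D$ via a Douglas-type factorization $|d_m|=C_m|x|$, the identity $\sum_m C_m^*C_m=e$, and uniqueness of the polar decomposition, finishing with the $\sigma$-weak closedness of the $\mathfrak D$-bimodule $M_n$. Your approach costs more bookkeeping (which you carried out correctly: the fixed-point argument under the dual action does place $W_m$, $C_m$ in $\mathfrak D$, and the bounded partial sums do converge strongly to the polar isometry), but it buys something the paper only establishes later: your expansion of $U$ is essentially the decomposition $M_n=\oplus_{m\geq 1}^{col}U_{n,m}\mathfrak D$ of Lemma \ref{l46}. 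Two minor points to tighten: for $n<0$ the paper defines $M_n$ as $M_{-n}^*$ rather than by the condition $Ah_0^{\frac12}\in W_n$, so your step ``$U_{n,m}h_0^{\frac12}\in W_n$ hence $U_{n,m}\in M_n$'' needs either the symmetric characterization for negative indices or the reduction to $n\geq0$ by taking adjoints (which is what the paper's ``we may assume $n\geq0$'' amounts to); and the passage from strong convergence of the partial sums in $\mathcal M$ to the identity $x=V|x|$ in $L^2(\mathcal M)$ deserves a line (normality of the left action of $\mathcal M$ on $L^2(\mathcal M)$ on bounded sets).
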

 \vskip10pt
 \begin{proof} We may assume that $n\geq 0$. Since $W_n$  is the wandering subspace of $\mathfrak M_n$, it is known that $|x|^2\in L^1(\mathfrak D)$ from \cite[Theorem 2.3]{lab}. Thus $|x|\in L^2(\mathfrak D)$. We also know that both $x$ and $|x|$ are right wandering vectors and  $U$ is a  partial isometry with initial subspace  $[|x|\mathcal M]$ and final subspace
 $[x\mathcal M]$. Now $[|x|\mathcal M]=[|x|\mathfrak A_0^*]\oplus [|x|\mathfrak D]\oplus [|x|\mathfrak A_0]$ and $[|x|\mathfrak A_0^*] \oplus [|x|\mathfrak A_0] \bot L^2(\mathfrak D)$. It follows that $L^2(\mathfrak D)\ominus[ |x|\mathfrak D]\subseteq [|x|\mathcal M]^{\bot}$. Thus
 $U\left(L^2(\mathfrak D)\ominus[ |x|\mathfrak D]\right)=\{0\}$ and $UL^2(\mathfrak D)=[x\mathfrak D]\subseteq W_n$. In particular,
 $Uh_0^{\frac12}\in W_n$. Consequently, $U\in M_n$.
  \end{proof}
  \begin{lemma}\label{l44}$\forall n,m\in\mathbb Z$, $M_nM_m\subseteq M_{n+m}$. In particular, $M_n^*\mathfrak DM_n\subseteq \mathfrak D$.
  \end{lemma}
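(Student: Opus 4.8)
The plan is to split into sign cases, record two structural facts about the modules $M_n$ and the column partial isometries $U_{n,m}$, and then run a single inner‑product identity in a carefully chosen order so as to avoid circular dependencies. First, the easy reductions: if $n=0$ or $m=0$ the assertion is just that each $M_k$ is a $\sigma$‑weakly closed $\mathfrak D$‑bimodule, which was already noted; and since $M_{-k}=M_k^{*}$, taking adjoints turns $M_nM_m\subseteq M_{n+m}$ into $M_{-m}M_{-n}\subseteq M_{-(n+m)}$, so the case $n,m\le-1$ follows once $n,m\ge1$ is known. I therefore treat, in this order, (i) $M_nM_m$ with $n,m\ge1$; (ii) $M_{-k}M_m$ with $k,m\ge1$; and (iii) $M_nM_{-l}$ with $n,l\ge1$, each time allowed to use the preceding cases.

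Two structural inputs do the work. $(\alpha)$ Every $A\in M_n$ $(n\ge1)$ is a $\sigma$‑strong limit $A=\sum_m U_{n,m}d_m$ with $d_m\in\mathfrak D$ — this comes from $W_n=\oplus_{m\ge1}^{col}U_{n,m}L^2(\mathfrak D)$ together with $U_{n,m}^{*}A\in M_0=\mathfrak D$, the partial sums $\bigl(\sum_{m\le N}U_{n,m}U_{n,m}^{*}\bigr)A$ being bounded by $\|A\|$ — and $M_n\subseteq\mathfrak A_0^n$, since $W_n$ is spanned by the vectors $U_{i_1}\cdots U_{i_n}dh_0^{\frac12}$ $(U_{i_k}\in\mathcal U)$ while $U_{i_1}\cdots U_{i_n}d$ lies in the $\sigma$‑weakly closed ideal $\mathfrak A_0^n$; hence $A\mathfrak M_k=A[\mathfrak A_0^kH^2]\subseteq[\mathfrak A_0^{n+k}H^2]=\mathfrak M_{n+k}$ for all $k\ge0$. $(\beta)$ For a column partial isometry $U=U_{k,i}$ one has $U^{*}\mathfrak A_0^k\subseteq\mathfrak A$; indeed $\mathfrak A_0^k=\bigvee_j U_{k,j}\mathfrak A$ (from $\mathfrak M_k=\oplus_{j\ge1}^{col}U_{k,j}H^2$ and the identity $\mathfrak A=\{X\in\mathcal M:Xh_0^{\frac12}\in H^2\}$, which uses that $H^2$ is a right $\mathfrak A$‑module and $\mathfrak A$ is $\sigma^{\varphi}$‑invariant), and then column orthogonality $U_{k,i}^{*}U_{k,j}=\delta_{ij}U_{k,i}^{*}U_{k,i}$ gives $U_{k,i}^{*}\mathfrak A_0^k=U_{k,i}^{*}U_{k,i}\mathfrak A\subseteq\mathfrak A$; consequently $U_{k,i}^{*}\mathfrak M_j=U_{k,i}^{*}[\mathfrak A_0^k\mathfrak M_{j-k}]\subseteq[\mathfrak A\mathfrak M_{j-k}]\subseteq\mathfrak M_{j-k}$ for all $j\ge k$. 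Since $M_{n+m}$ and $M_{m-k}$ are $\sigma$‑weakly closed $\mathfrak D$‑bimodules and left multiplication, right multiplication and $*$ are $\sigma$‑weakly continuous, in case (i) it suffices to prove $U_{n,i}B\in M_{n+m}$ for a column partial isometry $U_{n,i}$ and $B\in M_m$, and in case (ii) to prove $U_{k,i}^{*}B\in M_{m-k}$.

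The engine is the identity $\langle XYh_0^{\frac12},\xi\rangle=\langle Yh_0^{\frac12},X^{*}\xi\rangle$ for $\xi\in W_s$, used with $W_s\perp W_{s'}$ $(s\ne s')$ and $L^2(\mathcal M)=\oplus_{s\in\mathbb Z}W_s$. In case (i), take $X=U_{n,i}$, $Y=B\in M_m$: by $(\alpha)$, $U_{n,i}Bh_0^{\frac12}\in\mathfrak M_{n+m}=\oplus_{s\ge n+m}W_s$, and for $\xi\in W_s$ with $s>n+m$ one has $U_{n,i}^{*}\xi\in\mathfrak M_{s-n}\subseteq\mathfrak M_{m+1}$ by $(\beta)$ while $Bh_0^{\frac12}\perp\mathfrak M_{m+1}$; hence $U_{n,i}Bh_0^{\frac12}\in W_{n+m}$. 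In case (ii), take $X=U_{k,i}^{*}$, $Y=B\in M_m$: when $m\ge k$, $(\beta)$ gives $U_{k,i}^{*}Bh_0^{\frac12}\in\mathfrak M_{m-k}$, and for $\xi\in W_s$ with $s>m-k$ one has $U_{k,i}\xi\in\mathfrak M_{k+s}\subseteq\mathfrak M_{m+1}$ by $(\alpha)$, so the pairing vanishes and $U_{k,i}^{*}Bh_0^{\frac12}\in W_{m-k}$; the subcase $m<k$ follows by adjoints since $(U_{k,i}^{*}B)^{*}=B^{*}U_{k,i}\in M_{-m}M_k$ with $k>m$, already covered. Finally, in case (iii), cases (i)--(ii) (and the reduced case $n,m\le-1$, and the bimodule case) show that for $A\in M_n$ the operator $A^{*}\in M_{-n}$ satisfies $A^{*}W_s\subseteq W_{s-n}$ for every $s$; so for $B\in M_{-l}$ and any $\xi\in W_s$, $\langle ABh_0^{\frac12},\xi\rangle=\langle Bh_0^{\frac12},A^{*}\xi\rangle$ vanishes unless $-l=s-n$, i.e. unless $s=n-l$, whence $ABh_0^{\frac12}\in W_{n-l}$. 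The displayed consequence is then immediate: $M_n^{*}\mathfrak DM_n=M_{-n}M_0M_n\subseteq M_{-n}M_n\subseteq M_0=\mathfrak D$.

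The main obstacle is organizational rather than computational: the mixed‑sign inclusions do \emph{not} reduce formally to the same‑sign ones (adjointing $M_nM_{-l}$ produces another mixed product), so one must isolate the genuinely new content — the degree‑raising property $M_n\subseteq\mathfrak A_0^n$ and the degree‑lowering property $U_{k,i}^{*}\mathfrak A_0^k\subseteq\mathfrak A$ — and thread it through the inner‑product identity in exactly the order (i)$\to$(ii)$\to$(iii). Verifying $M_n\subseteq\mathfrak A_0^n$, equivalently that the column partial isometries of the type‑$1$ invariant subspace $[\mathfrak A_0^nH^2]$ lie in $\mathfrak A_0^n$, is the point where the structure theory of type‑$1$ subdiagonal algebras is really being used.
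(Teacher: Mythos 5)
Your argument is correct in substance but follows a genuinely different route from the paper. The paper works directly at the level of wandering subspaces: using Lemma \ref{l41} and \cite[Proposition 2.6]{ji3} it writes $W_n$ as the closed span of the products $U_{i_1}\cdots U_{i_n}L^2(\mathfrak D)$ and computes $[M_nM_mh_0^{\frac12}]$ outright; the same-sign cases are a rearrangement of such spans, and the mixed case $n\ge -m\ge 0$ is handled by importing $U_lL^2(\mathfrak D)U_k^*\subseteq L^2(\mathfrak D)$ from \cite[Lemma 3.1]{ji4}, the remaining mixed cases being left to symmetry. You instead exploit the grading $L^2(\mathcal M)=\oplus_{s}W_s$ together with degree-raising ($A\mathfrak M_k\subseteq\mathfrak M_{n+k}$ for $A\in M_n$) and degree-lowering ($U_{k,i}^*\mathfrak M_j\subseteq\mathfrak M_{j-k}$) along the filtration $\{\mathfrak M_j\}$, and isolate the component $W_{n+m}$ by an adjoint-pairing orthogonality argument; the reduction to the generators via $A=F_nA=\sum_m U_{n,m}(U_{n,m}^*A)$ and the ordering (i)$\to$(ii)$\to$(iii) are legitimate and noncircular. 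Your scheme avoids the citation to \cite{ji4} and makes the mixed-sign bookkeeping fully explicit, which the paper treats rather tersely; the paper's computation is shorter but leans on that external lemma.

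One step is not justified as stated: the claim $M_n\subseteq\mathfrak A_0^n$ (equivalently $U_{n,m}\in\mathfrak A_0^n$). From $Ah_0^{\frac12}\in W_n\subseteq\mathfrak M_n=[\mathfrak A_0^nh_0^{\frac12}]$ one cannot conclude $A\in\mathfrak A_0^n$ merely because $W_n$ is $L^2$-spanned by vectors $U_{i_1}\cdots U_{i_n}dh_0^{\frac12}$; that inference needs the nontrivial fact that a $\sigma$-weakly closed, $\sigma^{\varphi}$-invariant subspace $S\subseteq\mathcal M$ satisfies $S=\{X\in\mathcal M: Xh_0^{\frac12}\in[Sh_0^{\frac12}]\}$ (the case $S=\mathfrak A$ is the characterization you quote from \cite{js}), which you neither cite nor prove. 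Fortunately the claim is dispensable: the only consequence you use is $A\mathfrak M_k\subseteq\mathfrak M_{n+k}$, and this follows directly from $Ah_0^{\frac12}\in\mathfrak M_n$, $\mathfrak M_k=[h_0^{\frac12}\mathfrak A_0^k]$ and two-sided invariance, since $A\mathfrak M_k\subseteq[Ah_0^{\frac12}\mathfrak A_0^k]\subseteq[\mathfrak M_n\mathfrak A_0^k]=\mathfrak M_{n+k}$. The same remark applies to the inclusion $\bigvee_jU_{k,j}\mathfrak A\subseteq\mathfrak A_0^k$ hidden in your claimed equality in $(\beta)$: it has the same difficulty, but it is never used, because $U_{k,i}^*\mathfrak A_0^k\subseteq\mathfrak A$ follows directly from $U_{k,i}^*\mathfrak M_k\subseteq U_{k,i}^*U_{k,i}H^2\subseteq H^2$ and $\mathfrak A=\{X\in\mathcal M: Xh_0^{\frac12}\in H^2\}$. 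With these two statements rerouted as indicated, your proof is sound.
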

  \begin{proof}By Lemma \ref{l41} and \cite[Proposition 2.6]{ji3},  $W_n=\vee\{U_{i_1}U_{i_2}\cdots U_{i_n}L^2(\mathfrak D): U_{i_k}\in\mathcal U \}=[M_nh_0^{\frac12}]=[h_0^{\frac12}M_n]$ for $n\geq0$. If $m,n \geq0$, then
   \begin{align*}
   W_{m+n}&=\vee\{U_{i_1}U_{i_2}\cdots U_{i_m}\cdots U_{i_{m+n}}L^2(\mathfrak D): U_{i_k}\in\mathcal U \}\\
   &=\vee\{U_{i_1}U_{i_2}\cdots U_{i_m}[U_{m+1}\cdots U_{i_{m+n}}L^2(\mathfrak D)]: U_{i_k}\in\mathcal U \}\\
   &=\vee\{U_{i_1}U_{i_2}\cdots U_{i_m}W_n: U_{i_k}\in\mathcal U \}\\
   &=\vee\{U_{i_1}U_{i_2}\cdots U_{i_m}[h_0^{\frac12}M_n]: U_{i_k}\in\mathcal U \}\\
   &=\vee\{[U_{i_1}U_{i_2}\cdots U_{i_m}L^2(\mathfrak D)]M_n: U_{i_k}\in\mathcal U \}\\
   &=[M_mM_nh_0^{\frac12}].
   \end{align*}
   This means that  $M_mM_n\subseteq M_{m+n}$.  We similarly  have  that $M_mM_n\subseteq M_{m+n}$ if $m,n\leq 0$. Now we may assume that  $n\geq -m\geq 0$. Since $U_lL^2(\mathfrak D)U_k^*\subseteq L^2(\mathfrak D)$ by \cite[Lemma 3.1]{ji4} for any $l,k\geq 0$, we have
   \begin{align*}
   &\ \ \ \ [M_nM_mL^2(\mathfrak D)]=[M_nL^2(\mathfrak D)M_m]\\
   &=[M_n[L^2(\mathfrak D)M_{-m}^*]]=[M_n[M_{-m}L^2(\mathfrak D)]^*]\\
   &=[M_n\left (\vee\{U_{i_1}\cdots U_{i_{-m}}L^2(\mathfrak D): U_{i_k}\in\mathcal U \}\right)^*]\\
   &=[M_n\left (\vee\{ L^2(\mathfrak D)U_{i_{-m}}^*\cdots U_{i_{1}}^*: U_{i_k}\in\mathcal U \}\right)]\\
   &=\vee\{M_n L^2(\mathfrak D)U_{i_{-m}}^*\cdots U_{i_{1}}^*: U_{i_k}\in\mathcal U \}\\
   &=\vee\{U_{j_1}\cdots U_{j_{n+m}}\left(U_{n+m+1}\cdots U_{j_n} L^2(\mathfrak D)U_{i_{-m}}^*\cdots U_{i_{1}}^*\right): U_{j_l}, U_{i_k}\in\mathcal U \}\\
   &\subseteq \vee\{U_{j_1}\cdots U_{j_{n+m}} L^2(\mathfrak D): U_{j_l}\in\mathcal U \}\\
   &\subseteq W_{n+m}=[M_{n+m}h_0^{\frac12}].
   \end{align*}
   Thus $M_{n}M_m\subseteq M_{n+m}$. Note that $M_0=\mathfrak D$ and $M_n$ is a $\mathfrak D$ bi-module. Then
   $M_n^*\mathfrak DM_n\subseteq M_n^*M_n\subseteq \mathfrak D$.
  \end{proof}
    \begin{corollary}\label{c45}
  Let $A\in M_n$ and $A=U|A|$ the polar decomposition of $A$. Then $|A|\in \mathfrak D$ and $U\in M_n$.
  \end{corollary}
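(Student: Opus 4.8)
The plan is to deduce the corollary from Lemma~\ref{l44} together with the fact, recorded just before Lemma~\ref{l41}, that each $M_n$ is a $\sigma$-weakly closed $\mathfrak D$-bimodule. There is no real need to pass back to the polar decomposition of $Ah_0^{\frac12}$ inside $W_n$ (i.e.\ to invoke Lemma~\ref{l43} directly), since that would be awkward: the factor $|A|$ and $h_0$ need not commute, so the polar decomposition of $Ah_0^{\frac12}$ in $L^2(\mathcal M)$ is not simply obtained from that of $A$.

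First I would show $|A|\in\mathfrak D$. Since $A\in M_n$ we have $A^*\in M_n^*=M_{-n}$, so by Lemma~\ref{l44}, applied with the pair $(-n,n)$, one gets $A^*A\in M_{-n}M_n\subseteq M_0=\mathfrak D$. As $\mathfrak D$ is a von Neumann algebra and $A^*A\geq0$, continuous functional calculus gives $|A|=(A^*A)^{\frac12}\in\mathfrak D$, and likewise the support (= range) projection $U^*U$ of $|A|$ lies in $\mathfrak D$.

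Next I would show $U\in M_n$. For each $\varepsilon>0$ the element $(|A|+\varepsilon)^{-1}$ lies in $\mathfrak D$, so $A(|A|+\varepsilon)^{-1}\in M_n$ because $M_n$ is a right $\mathfrak D$-module. Letting $\varepsilon\downarrow0$, one has $|A|(|A|+\varepsilon)^{-1}\to U^*U$ in the $\sigma$-strong topology, hence $A(|A|+\varepsilon)^{-1}=U\,|A|(|A|+\varepsilon)^{-1}\to UU^*U=U$ $\sigma$-strongly; this net is norm-bounded by $1$, so it also converges $\sigma$-weakly to $U$, and since $M_n$ is $\sigma$-weakly closed we conclude $U\in M_n$.

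The argument is essentially routine; the only point requiring a little care is the final step, namely exhibiting the partial isometry $U$ of the polar decomposition as a bounded $\sigma$-weak limit of right $\mathfrak D$-translates of $A$. That is where the bimodule structure of $M_n$ and its $\sigma$-weak closedness are both used; everything else is immediate from Lemma~\ref{l44}.
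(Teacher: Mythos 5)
Your proposal is correct. The first half coincides with the paper's argument: both get $|A|\in\mathfrak D$ from $A^*A\in M_{-n}M_n\subseteq M_0=\mathfrak D$ (Lemma \ref{l44}) plus functional calculus in the von Neumann algebra $\mathfrak D$. For the second half you take a genuinely different route. The paper argues spatially in $L^2(\mathcal M)$: using $|A|\in\mathfrak D$ it decomposes the closure of $|A|L^2(\mathcal M)$ along $(H_0^2)^*\oplus L^2(\mathfrak D)\oplus H_0^2$, observes that $U$ annihilates $L^2(\mathfrak D)\ominus[|A|L^2(\mathfrak D)]$, and concludes $UL^2(\mathfrak D)=[U|A|L^2(\mathfrak D)]=[AL^2(\mathfrak D)]\subseteq W_n$, so in particular $Uh_0^{\frac12}\in W_n$, which is precisely the definition of $U\in M_n$; this mirrors the paper's own proof of Lemma \ref{l43}, so the two statements are handled uniformly. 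You instead realize $U$ as the $\sigma$-weak limit of $A(|A|+\varepsilon)^{-1}$, using $(|A|+\varepsilon)^{-1}\in\mathfrak D$, the right $\mathfrak D$-module property of $M_n$, the $\sigma$-strong convergence $|A|(|A|+\varepsilon)^{-1}\nearrow U^*U$ (support projection), continuity of left multiplication by $U$, and the $\sigma$-weak closedness of $M_n$ — all facts the paper records before Lemma \ref{l41} or in Lemma \ref{l44}, and all valid for $n\in\mathbb Z$ since $M_{-n}=M_n^*$. Your version is a standard von Neumann algebra approximation that never touches the wandering-subspace geometry of $W_n$, which makes it arguably more robust and self-contained; the paper's version is shorter once the $L^2$ picture is in hand and stays closer to the defining condition $Ah_0^{\frac12}\in W_n$. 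Your preliminary remark about why Lemma \ref{l43} cannot simply be quoted (the polar decomposition of $Ah_0^{\frac12}$ in $L^2(\mathcal M)$ is not $U\cdot(|A|h_0^{\frac12})$, since $|A|h_0^{\frac12}$ need not be positive) is also a fair observation, and indeed the paper does not argue that way either.
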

\begin{proof}Note that $|A|^2=A^*A\in M_0=\mathfrak D$. Then $|A|\in\mathfrak D$. Moreover,
$$R(|A|)=|A|( L^2(\mathcal M))=|A|((H_0^2)^*)\oplus |A|(L^2(\mathfrak D))\oplus |A|(H_0^2). $$ We have   $L^2(\mathfrak D)\ominus [|A|(L^2(\mathfrak D))]\bot R(|A|)$ and hence $U(L^2(\mathfrak D)\ominus [|A|L^2(\mathfrak D)])=\{0\}$. Thus $UL^2(\mathfrak D)=U|A|L^2(\mathfrak D)=AL^2(\mathfrak D)\subseteq W_n$. Therefore, $U\in M_n$.
\end{proof}

For any $n\in\mathbb Z$, denote by  $E_n$ and $P_n$ the projections  from $L^2(\mathcal M) $ onto $W_n$ and $\oplus_{m\geq n}W_m$ respectively.
Then $I=\sum_{n\in\mathbb Z}E_n$ and  $P_n=\sum_{m\geq n}E_m$ for $n\in\mathbb Z$. It is clear that $P_n$ is the projection from $L^2(\mathcal M) $ onto $\mathfrak M_n$ for any $n\geq0$. In particular, $P_0$ and $P_1$ are  the same as defined in Section 2 and $P_n(L^2(\mathcal M))$ is a two side invariant  subspace of $\mathfrak A$  for any $n\in\mathbb Z$  from Lemma \ref{l44}.
 Moreover, we define
 \begin{equation}\label{fn}
F_0=I \mbox{ and } F_n= \sum_{m\geq 1}U_{n,m}U_{n,m}^*, \ \ \ \forall n\not=0.
 \end{equation}
 For any $n\geq 1$, since $\mathfrak M_{n}\supseteq \mathfrak M_{n+1}$,  we have   $F_n\geq F_{n+1}$. Similarly, $F_n\leq F_{n+1}$ for any $n<0$.   It is elementary that $F_n\in \mathfrak D$ for all $n\in\mathbb Z$ by formulae (\ref{42}), (\ref{44}) and Lemma \ref{l44}.
\begin{lemma}\label{l46}
$F_n\in \mathcal Z(\mathfrak D)$ and $M_n=\oplus_{m\geq 1}^{col}U_{n,m}\mathfrak D$  for any  $n\in\mathbb Z$.  Moreover, $F_m A=A$ for all $A\in M_n$ and  $n\geq m\geq0$ and $F_nB=B$ for all $B\in M_m$ and $m\leq n\leq 0$.
\end{lemma}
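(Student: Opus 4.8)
The plan is to derive all four assertions from the column decompositions $W_n=\oplus_{m\geq1}^{col}U_{n,m}L^2(\mathfrak D)$ of (\ref{42}) and (\ref{44}), from Lemma \ref{l44}, and from the monotonicity $I=F_0\geq F_1\geq F_2\geq\cdots$ and $I=F_0\geq F_{-1}\geq F_{-2}\geq\cdots$ of the family $\{F_n\}$ noted just before the statement. I would begin by showing that the orthogonal projection $L_{F_n}$ on $L^2(\mathcal M)$ associated with the projection $F_n\in\mathfrak D$ restricts to the identity on $W_n$. The column orthogonality $U_{n,m}^*U_{n,l}=0$ ($m\neq l$) makes the range projections $U_{n,m}U_{n,m}^*$ pairwise orthogonal, so the partial sums $\sum_{m=1}^{M}U_{n,m}U_{n,m}^*$ increase $\sigma$-strongly to $F_n$; and for $\xi=U_{n,m}\eta$ with $\eta\in L^2(\mathfrak D)$ one gets $F_n\xi=U_{n,m}U_{n,m}^*U_{n,m}\eta=U_{n,m}\eta=\xi$ using $U_{n,m}=U_{n,m}(U_{n,m}^*U_{n,m})$. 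By linearity and continuity $F_n\xi=\xi$ for all $\xi\in W_n$; that is, $W_n\subseteq F_nL^2(\mathcal M)$.

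Next I would prove that $F_nA=A$ for every $A\in M_n$ and every $n\in\mathbb Z$. Indeed $Ah_0^{\frac12}\in W_n$ by the definition of $M_n$, so the previous step gives $(F_nA)h_0^{\frac12}=F_n(Ah_0^{\frac12})=Ah_0^{\frac12}$, whence $\bigl((I-F_n)A\bigr)h_0^{\frac12}=0$; since $\varphi$ is faithful, $h_0$ has full support and the right multiplication map $X\mapsto Xh_0^{\frac12}$ is injective on $\mathcal M$, forcing $(I-F_n)A=0$. Parts (3) and (4) now follow from monotonicity: for $0\leq m\leq n$ we have $F_mF_n=F_n$, so $F_mA=F_m(F_nA)=F_nA=A$ for $A\in M_n$; symmetrically, for $m\leq n\leq0$ we have $F_nF_m=F_m$, so $F_nB=F_n(F_mB)=F_mB=B$ for $B\in M_m$.

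For the decomposition $M_n=\oplus_{m\geq1}^{col}U_{n,m}\mathfrak D$, observe first that $h_0^{\frac12}\in L^2(\mathfrak D)$ gives $U_{n,m}h_0^{\frac12}\in U_{n,m}L^2(\mathfrak D)\subseteq W_n$, hence $U_{n,m}\in M_n$; since $M_n$ is a $\sigma$-weakly closed $\mathfrak D$-bimodule and $(U_{n,m}\mathfrak D)^*(U_{n,l}\mathfrak D)=0$ for $m\neq l$, the internal column sum $\oplus_{m\geq1}^{col}U_{n,m}\mathfrak D$ is contained in $M_n$. For the reverse inclusion take $A\in M_n$; Lemma \ref{l44} yields $U_{n,m}^*A\in M_{-n}M_n\subseteq M_0=\mathfrak D$, say $D_m=U_{n,m}^*A\in\mathfrak D$, and then, using $\sum_{m=1}^{M}U_{n,m}U_{n,m}^*\to F_n$ $\sigma$-weakly together with $F_nA=A$, we obtain $\sum_{m=1}^{M}U_{n,m}D_m=\bigl(\sum_{m=1}^{M}U_{n,m}U_{n,m}^*\bigr)A\to A$ $\sigma$-weakly, so $A$ lies in the $\sigma$-weakly closed linear span of $\bigcup_m U_{n,m}\mathfrak D$.

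Finally, to prove $F_n\in\mathcal Z(\mathfrak D)$, fix $D\in\mathfrak D$. Since $DU_{n,m}\in\mathfrak DM_n\subseteq M_n$, the second step gives $(I-F_n)DU_{n,m}=0$ for each $m$; multiplying by $U_{n,m}^*$ on the right, summing over $m$ and passing to the $\sigma$-weak limit, we get $(I-F_n)DF_n=0$, i.e. $DF_n=F_nDF_n$. Applying this with $D^*$ in place of $D$ and taking adjoints gives $F_nD=F_nDF_n$ as well, hence $F_nD=DF_n$. The step I expect to demand the most care is the first one: one must verify that $F_n$, a priori only known to lie in $\mathfrak D$, genuinely dominates the wandering space $W_n$ inside $L^2(\mathcal M)$, and keep the various $L^2$-products straight. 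Once $W_n\subseteq F_nL^2(\mathcal M)$ is secured, injectivity of $X\mapsto Xh_0^{\frac12}$ and the monotonicity of $\{F_n\}$ reduce everything else to bookkeeping with Lemma \ref{l44}.
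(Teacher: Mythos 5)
Your proof is correct and takes essentially the same route as the paper's: $F_n$ acts as the identity on $W_n=\oplus_{m\geq1}^{col}U_{n,m}L^2(\mathfrak D)$, injectivity of $X\mapsto Xh_0^{\frac12}$ yields $F_nA=A$ for $A\in M_n$, Lemma \ref{l44} gives $U_{n,m}^*A\in\mathfrak D$ and hence $A=\sum_{m\geq1}U_{n,m}(U_{n,m}^*A)$, and the relation $F_nDF_n=DF_n$ together with the monotonicity of $\{F_n\}$ finishes the centrality and the claims for $F_mA=A$, $F_nB=B$. You merely spell out steps the paper leaves implicit (the injectivity of right multiplication by $h_0^{\frac12}$ and the adjoint trick turning $DF_n=F_nDF_n$ into $F_nD=DF_n$).
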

\begin{proof}  Let $n\geq1$. By (\ref{41}) and (\ref{43}),  $\mathfrak M_n= \oplus_{m\geq 1}^{col} U_{n,m}H^2$ and $W_n=\oplus_{m\geq1}^{col}U_{n,m}L^2(\mathfrak D)$.
Then $F_n \mathfrak M_n=\mathfrak M_n$ and $F_nW_n=W_n$. In particular, $F_nA=A,$ $\forall A\in M_n$. Note that $\mathfrak DM_n=M_n$ and $U_{n,m}\in M_n$. This implies that  $F_n DU_{n,m}=DU_{n,m}$ for any $m\geq1$ and  $D\in\mathfrak D$. Thus $F_n DU_{n,m}U_{n,m}^*=DU_{n,m}U_{n,m}^*.$ Therefore,
$$F_nDF_n=\sum_{m\geq1}F_n DU_{n,m}U_{n,m}^*=\sum_{m\geq1}DU_{n,m}U_{n,m}^*=DF_n$$   for all $D\in\mathfrak D$. This means that $F_n\in \mathfrak D^{\prime}$. Consequently, $F_n\in\mathcal Z(\mathfrak D).$ If $n<0$, we similarly have that  $F_n\in\mathcal Z(\mathfrak D)$.

 Take any $A\in M_n$. Since $U^*_{n,m}A\in \mathfrak D$ by Lemma \ref{l44}, we have
 $$A=F_nA=\sum_{m\geq1}^{col}U_{n,m}(U^*_{n,m}A)\in \oplus_{m\geq 1}^{col}U_{n,m}\mathfrak D.$$ The converse is clear. Moreover, since $F_m\geq F_n$ if $n\geq m\geq0$, we have   $F_mA=F_mF_nA=A$ for any $A\in M_n$.  The another case is similar.
\end{proof}

For any $t\in\mathbb R$, we define $W_t=\sum_{n\in\mathbb Z}e^{int}E_n$ and  $\widetilde{\alpha}_t(A)=W_tAW_t^*$, $\forall A\in \mathcal B(L^2(\mathcal M))$.
It is easily shown  that $\widetilde{\alpha}=\{\widetilde{\alpha}_t:t\in \mathbb R\}$ is a periodic  flow on $\mathcal B(L^2(\mathcal M))$ with the period $2\pi$.
\begin{lemma}\label{l47} For any $n\in\mathbb Z$, we have
$\widetilde{\alpha}_t(L_A)=e^{int}L_A$ for all $t\in \mathbb R$ and $A\in M_n$. In particular, $\tilde{\alpha}=\{\tilde{\alpha}_t:t\in\mathbb R\}$ is a periodic  flow of $L(\mathcal M)$.
\end{lemma}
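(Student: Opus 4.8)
The statement to prove is Lemma \ref{l47}: for $A\in M_n$ we have $\widetilde\alpha_t(L_A)=e^{int}L_A$ for all $t\in\mathbb R$, and consequently $\widetilde\alpha$ restricts to a periodic flow of $L(\mathcal M)$. My plan is to compute the action of $\widetilde\alpha_t(L_A)=W_tL_AW_t^*$ directly on the pieces $W_m$ of the orthogonal decomposition $L^2(\mathcal M)=\oplus_{m\in\mathbb Z}W_m$ from \eqref{45}. Since $W_t=\sum_{m\in\mathbb Z}e^{imt}E_m$ where $E_m$ is the projection onto $W_m$, and $W_t^*=\sum_m e^{-imt}E_m$, it suffices to show that $L_A$ maps $W_m$ into $W_{m+n}$ for every $m$, i.e. that $E_{m+n}L_AE_m=L_AE_m$ and $E_kL_AE_m=0$ for $k\neq m+n$. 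Granting this, $W_tL_AW_t^*$ acting on a vector $\xi\in W_m$ gives $e^{-imt}W_tL_A\xi = e^{-imt}e^{i(m+n)t}L_A\xi = e^{int}L_A\xi$, which is exactly the claim; summing over $m$ gives the operator identity.

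So the real content is the inclusion $M_nW_m\subseteq W_{m+n}$, i.e. $AW_m\subseteq W_{m+n}$ for $A\in M_n$. By Lemma \ref{l41} (and its analogue for negative indices via the decompositions \eqref{42}, \eqref{44}) we have $W_m=[L^2(\mathfrak D)M_m]$ for $m\geq 0$ and $W_m=W_{-|m|}=[M_{|m|}^*L^2(\mathfrak D)]=[L^2(\mathfrak D)M_m]$ in general; alternatively $W_m=\oplus_{k\geq1}^{col}U_{m,k}L^2(\mathfrak D)$. Since $M_n$ is a $\sigma$-weakly closed $\mathfrak D$-bimodule (stated before Lemma \ref{l41}) and $A\in M_n$, for a spanning element $U_{m,k}d h_0^{1/2}$ of $W_m$ (with $d\in\mathfrak D$) one has $A U_{m,k}d h_0^{1/2}$: using that $U_{m,k}\in M_m$ (Lemma \ref{l46}) and $M_nM_m\subseteq M_{n+m}$ (Lemma \ref{l44}) together with the bimodule property, $AU_{m,k}\in M_{n+m}$, hence $AU_{m,k}d\in M_{n+m}$ and $AU_{m,k}d h_0^{1/2}\in [M_{n+m}h_0^{1/2}]=W_{n+m}$ by Lemma \ref{l41}. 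Taking closed linear spans gives $AW_m\subseteq W_{m+n}$. One must handle the mixed-sign cases ($m$ and $m+n$ of opposite signs), but Lemma \ref{l44} was proved for all $m,n\in\mathbb Z$, so $M_nM_m\subseteq M_{n+m}$ is available unconditionally, and the identifications $W_k=[M_kh_0^{1/2}]$ extend to all $k\in\mathbb Z$ by passing to adjoints as in the setup preceding Theorem \ref{t42}.

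For the "in particular" clause: each $L_A$ with $A\in\mathcal M$ lies in $L(\mathcal M)$; by Theorem \ref{t42}, $\mathcal M=\vee\{M_n:n\in\mathbb Z\}$, so $L(\mathcal M)$ is the $\sigma$-weak closure of the linear span of $\{L_A: A\in M_n, n\in\mathbb Z\}$, and on each such generator $\widetilde\alpha_t(L_A)=e^{int}L_A\in L(\mathcal M)$. Since $\widetilde\alpha_t$ is $\sigma$-weakly continuous (it is conjugation by the unitary $W_t$), $\widetilde\alpha_t(L(\mathcal M))\subseteq L(\mathcal M)$, and applying the same with $-t$ gives equality; periodicity with period $2\pi$ is inherited from $\widetilde\alpha$ on $\mathcal B(L^2(\mathcal M))$.

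The main obstacle is purely bookkeeping: making sure the identity $W_m=[M_mh_0^{1/2}]$ (hence $W_m=[L^2(\mathfrak D)M_m]=[M_mL^2(\mathfrak D)]$) and the product rule $M_nM_m\subseteq M_{n+m}$ are invoked correctly across all sign combinations of $n$ and $m$, and that the spanning elements of $W_m$ are taken in a form ($U_{m,k}\mathfrak D h_0^{1/2}$, or $\mathfrak D U_{m,k} h_0^{1/2}$) to which Lemma \ref{l44} and the bimodule property apply cleanly. No delicate estimates are needed; the argument is entirely structural once these earlier lemmas are marshalled.
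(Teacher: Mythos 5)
Your proposal is correct and follows essentially the same route as the paper: both arguments reduce the claim to showing that $L_A$ shifts $E_kL^2(\mathcal M)=[M_kh_0^{1/2}]$ into $E_{n+k}L^2(\mathcal M)$ via the product rule $M_nM_k\subseteq M_{n+k}$ of Lemma \ref{l44}, and then read off the multiplier $e^{int}$ from the conjugation by $W_t$, concluding the flow statement from Theorem \ref{t42}. The only cosmetic difference is that the paper first reduces to the generators $U_{n,m}$ and $\mathfrak D$ via Lemma \ref{l46}, whereas you work directly with a general $A\in M_n$, which is equally valid.
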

\begin{proof}  Since $L_DE_n=E_nL_D$ for all $n\in\mathbb Z$ and $D\in\mathfrak D$, it is trivial that $\widetilde{\alpha}_t(L_D)=L_D$, $\forall t\in\mathbb R$. By Lemma \ref{l46}, it is sufficient to show that $ \widetilde{\alpha}_t(L_{U_{n,m}})=e^{int}L_{U_{n,m}}$ for all $n\in\mathbb Z$ and $m\geq1$. In fact, for any $B\in M_k$, $Bh_0^{\frac12}\in [M_kh_0^{\frac12}]=E_kL^2(\mathcal M)$, we have $L_{U_{n,m}}E_k(Bh_0^{\frac12})=U_{n,m}Bh_0^{\frac12}\in [M_{n+k}h_0^{\frac12}]=E_{n+k}L^2(\mathcal M)$ by Lemma \ref{l44}. Thus $L_{U_{n,m}}E_k(x)\in E_{n+k}L^2(\mathcal M)$ for any $x\in L^2(\mathcal M)$ and
\begin{align}\label{fl471}
E_lL_{U_{n,m}}E_kx=\left\{
\begin{array}{l}0,\ \ \ \ \ \ \ \  \ \ \ \  \  l\not=n+k;\\
L_{U_{n,m}}E_kx, \  l=n+k.
\end{array}\right.\end{align}
It follows that
\begin{align*}\widetilde{\alpha}_t(L_{U_{n,m}})(x)&=(\sum_{l=-\infty}^{+\infty}e^{ilt}E_l)L_{U_{n,m}}(\sum_{k=-\infty}^{+\infty}e^{-ikt}E_k)x\\
&=\sum_{l=-\infty}^{+\infty}\sum_{k=-\infty}^{+\infty}e^{i(l-k)t} E_lL_{U_{n,m}}E_kx\\
&=\sum_{k=-\infty}^{+\infty}e^{int} L_{U_{n,m}}E_kx\\
&=e^{int} L_{U_{n,m}}\sum_{k=-\infty}^{+\infty} E_kx\\
&=e^{int} L_{U_{n,m}}x
\end{align*}from formula (\ref{fl471}).
Hence $\widetilde{\alpha}_t(L_{U_{n,m}})=e^{int}L_{U_{n,m}}.$ This implies that $\tilde{\alpha}_t(L(M_n))=L(M_n)$,  and thus $\tilde{\alpha}_t(L(\mathcal M))=L(\mathcal M)$  for all $t\in
\mathbb R$ by Theorem \ref{t42}. Hence $\tilde{\alpha}$ is a periodic flow of $L(\mathcal M)$.
\end{proof}
  We now  define a periodic flow $\alpha=\{\alpha_t:t\in\mathbb R\}$  on $\mathcal M$ by setting
  $$L_{\alpha_t(A)}=\widetilde{\alpha}_t(L_A),  \ \forall A\in\mathcal M,\ \forall t\in\mathbb R.$$
   By Lemma \ref{l47}, this implies  $\alpha_t(A)=e^{int}A$ for all $A\in M_n$ and $t\in\mathbb R$. We then  show that $\mathfrak A$ is precisely the analytic operator algebra associated with $\alpha$.
\begin{theorem}\label{t48} Let $\mathfrak A$ be a type 1 subdiagonal algebra and let  $\alpha$ be defined as  above. Then we have
$\mathfrak A=H^{\infty}(\alpha)$.
\end{theorem}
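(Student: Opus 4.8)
The plan is to show the two inclusions $\mathfrak A\subseteq H^{\infty}(\alpha)$ and $H^{\infty}(\alpha)\subseteq\mathfrak A$ separately, using the Fourier/spectral decomposition of the periodic flow $\alpha$ together with the structural results already established. Recall that for a periodic flow $\alpha=\{\alpha_t:t\in\mathbb R\}$ with period $2\pi$ one has the spectral subspaces $\mathcal M^{(n)}(\alpha)=\{A\in\mathcal M:\alpha_t(A)=e^{int}A,\ \forall t\in\mathbb R\}$, the Fourier coefficient maps $\Psi_n(A)=\frac{1}{2\pi}\int_0^{2\pi}e^{-int}\alpha_t(A)\,dt$ (which are normal $\mathfrak D$-bimodule projections onto $\mathcal M^{(n)}(\alpha)$), and by definition $H^{\infty}(\alpha)=\{A\in\mathcal M:\Psi_n(A)=0\ \forall n<0\}=\overline{\vee}\{\mathcal M^{(n)}(\alpha):n\geq 0\}$, with $\mathcal M^{(0)}(\alpha)$ the fixed-point algebra. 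So the task reduces to identifying $\mathcal M^{(n)}(\alpha)$ with $M_n$ for every $n\in\mathbb Z$.

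First I would prove $M_n\subseteq\mathcal M^{(n)}(\alpha)$; this is immediate, since Lemma \ref{l47} and the definition of $\alpha$ give $\alpha_t(A)=e^{int}A$ for all $A\in M_n$, $t\in\mathbb R$. Conversely, I would show $\mathcal M^{(n)}(\alpha)\subseteq M_n$. Take $A\in\mathcal M^{(n)}(\alpha)$. By Theorem \ref{t42}, $\mathcal M=\overline{\vee}\{M_k:k\in\mathbb Z\}$, and since each $M_k\subseteq\mathcal M^{(k)}(\alpha)$ with the $\mathcal M^{(k)}(\alpha)$ mutually orthogonal under the Fourier projections, applying $\Psi_n$ to the decomposition of $A$ and using that $\Psi_n$ is $\sigma$-weakly continuous and annihilates $M_k$ for $k\neq n$ forces $A$ to lie in the $\sigma$-weak closure of $M_n$; since $M_n$ is $\sigma$-weakly closed, $A\in M_n$. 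Hence $\mathcal M^{(n)}(\alpha)=M_n$ for all $n$. (Alternatively, one can argue directly at the Hilbert space level: for $A\in\mathcal M^{(n)}(\alpha)$, the relation $\alpha_t(A)=e^{int}A$ transported through $L$ gives $\widetilde\alpha_t(L_A)=e^{int}L_A$, i.e. $W_tL_AW_t^*=e^{int}L_A$, which upon comparing with $W_t=\sum_m e^{imt}E_m$ yields $E_{m+n}L_AE_m=E_m' \cdot$ contributions only, forcing $L_A$ to shift each $W_m$ into $W_{m+n}$; in particular $Ah_0^{1/2}\in E_nL^2(\mathcal M)=W_n$, so $A\in M_n$.)

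With $\mathcal M^{(n)}(\alpha)=M_n$ in hand, Theorem \ref{t42} gives $\mathfrak A=\overline{\vee}\{M_n:n\geq 0\}=\overline{\vee}\{\mathcal M^{(n)}(\alpha):n\geq 0\}=H^{\infty}(\alpha)$, which is the claim. The main obstacle I anticipate is the converse inclusion: one must be careful that an element $A\in\mathcal M^{(n)}(\alpha)$ is genuinely recovered from its "matrix entries" $E_{m+n}L_AE_m$ and that the vectors $\{Bh_0^{1/2}:B\in M_m\}$ really exhaust $W_m$ (this is exactly the content of Lemma \ref{l41}, $W_m=[M_mh_0^{1/2}]$) — so that the spectral condition on $L_A$ can be read off from its action on the single vector $h_0^{1/2}$. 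The density statement $\mathfrak A+\mathfrak A^*$ $\sigma$-weakly dense in $\mathcal M$ (used in Theorem \ref{t42}) and the $\sigma$-weak closedness of each $M_n$ are what make the passage from "dense span" to "equality" legitimate, and these must be invoked explicitly.
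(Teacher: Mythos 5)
Your proposal is correct, but it takes a genuinely different route from the paper's proof. You identify every spectral subspace: $M_n\subseteq\widetilde M_n:=\{A:\alpha_t(A)=e^{int}A\}$ from Lemma \ref{l47}, and for the converse your second (Hilbert-space) argument is the cleanest and is fully rigorous with the paper's definitions: $\alpha_t(A)=e^{int}A$ gives $W_tL_AW_t^*=e^{int}L_A$, hence $E_lL_AE_m=0$ unless $l=m+n$, so $L_A$ maps $W_m$ into $W_{m+n}$; since $h_0^{1/2}\in W_0=L^2(\mathfrak D)$ this yields $Ah_0^{1/2}\in W_n$, i.e. $A\in M_n$ by the very definition of $M_n$. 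Combined with Theorem \ref{t42} and Solel's description $H^{\infty}(\alpha)=\vee\{\widetilde M_n:n\geq0\}$, this gives the theorem, and in fact the stronger statement $M_n=\widetilde M_n$ for all $n$, which the paper only records as a remark after Theorem \ref{t48}. The paper argues differently: it proves only the inclusion $\mathfrak A\subseteq H^{\infty}(\alpha)$ (via $M_n\subseteq\widetilde M_n$), identifies the fixed-point algebra $\widetilde M_0$ with $\mathfrak D$ (essentially your $n=0$ case), checks $\varphi\circ\alpha_t=\varphi$ so that the expectation determined by $\alpha$ is $\Phi$, and then invokes the Loebl--Muhly theorem that $H^{\infty}(\alpha)$ is a (maximal) subdiagonal algebra with respect to $\Phi$ together with the maximality of $\mathfrak A$ to force equality; this avoids any discussion of the Fourier coefficient maps. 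One caveat on your first variant of the converse: it rests on the normality ($\sigma$-weak continuity) of the maps $\Psi_n$, which you assert but should justify -- it does hold here because $\alpha_t=\mathrm{Ad}(W_t)|_{\mathcal M}$ with $W_t=\sum_n e^{int}E_n$ strongly continuous, so $t\mapsto\omega\circ\alpha_t$ is norm continuous on $\mathcal M_*$ and $\Psi_n$ has a preadjoint; alternatively, simply use your parenthetical argument, which needs none of this.
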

\begin{proof} Put $\widetilde{M}_n=\{A\in\mathcal M: \alpha_t(A)=e^{int}A, \forall t\in\mathbb R\}$, $\forall n\in\mathbb Z$. It is known that
$\widetilde{M}_0$  is the fixed algebra of $\alpha$, $H^{\infty}(\alpha)=\vee\{\widetilde{M}_n:n\geq 0\}$ and $H_0^{\infty}(\alpha)=\vee\{\widetilde{M}_n:n\geq1\}$ by \cite[Proposition 2.4]{solel1}(cf. \cite[Proposition 2.3]{solel2}).  Then we have $\mathfrak D\subseteq \widetilde{M}_0$ by Lemma \ref{l47}.
Conversely, take any $A\in\widetilde{M}_0$. Then $\alpha_t(A)=A$, $\forall t\in \mathbb R$, which implies that $\widetilde{\alpha}_t(L_A)=(L_A)$, $\forall t\in\mathbb R$.  Thus $L_AE_n=E_nL_A$ for all $n\in\mathbb Z$. In particular, $L_AL^2(\mathfrak D)\subseteq L^2(\mathfrak D)$, that is, $A\in\mathfrak D$. Thus $\widetilde{M}_0=\mathfrak D$. On the other hand, we have
$M_n\subseteq \widetilde{M}_n$ for all $n\in \mathbb Z$ by Lemma \ref{l47} again.  It now follows that
$\mathfrak A_0=\vee\{M_n:n\geq1\}\subseteq \vee\{\widetilde{M}_n: n\geq1\}=H_0^{\infty}(\alpha)$. Moreover, $\varphi\circ\alpha_t(D)=\varphi(D)$ for all $D\in\mathfrak D$ and $\varphi\circ\alpha_t(A)=\varphi(e^{int}A)=0=\varphi(A)$ for all $A\in M_n$ and $n\not=0$. Thus $\varphi\circ \alpha_t=\varphi$ for all $t\in\mathbb R$.  It follows that  the unique faithful normal conditional expectation from $\mathcal M$ onto $\widetilde{M}_0=\mathfrak D$ determined by $\alpha$ is $\Phi$. Thus
  both $\mathfrak A$ and $H^{\infty}(\alpha)$ are  maximal subdiagonal algebras with respect to $\Phi$  with
  $\mathfrak A\subseteq H^{\infty}(\alpha)$(cf. \cite[Theorem 3.5]{loe}). Hence $\mathfrak A=H^{\infty}(\alpha)$.
\end{proof}
Theorem \ref{t48} says that  a type 1 subdiagonal algebra $\mathfrak A$ coincides with   an analytic operator algebra determined by  a periodic flow in a $\sigma$-finite von Neumann algebra. In particular, $M_n=\widetilde{M}_n$  for any $n\in\mathbb Z$ in Theorem \ref{t48}.  We now  recall some notions for analytic operator algebras for a periodic flows from \cite{solel1, solel2}.   Let $n\in\mathbb Z$.
For any $T\in (L(\mathfrak D))^{\prime}$, we define
$$\beta_n(T)=\sum_{m\geq1}L_{U_{n,m}}TL_{U_{n,m}^*}.$$
Then $\beta_n$ is well-defined such that  $L_{F_n}\beta_n(T)=\beta_n(T)L_{F_n}=\beta_n(T)$.
On the other hand,  since  $L(\mathcal M)^{\prime}=R(\mathcal M)$ and $\mathfrak M_n $ is also left invariant for $\mathfrak A$ for any $n\geq0 $,  we similarly have
\begin{equation*}
\mathfrak M_n=\oplus_{k\geq1}^{row}H^2V_{n,k}
\end{equation*}
 for a family of row orthogonal partial isometries $\{V_{n,k}:k\geq1\}\subseteq \mathcal M$. In fact, since
  $\mathfrak M_n^*=\mathfrak M_{-n}=\oplus _{k\geq1}^{col}U_{-n,k}H^2$, we may choose $V_{n,k}=U_{-n,k}^*$ for any   $k\geq1$. We also  may define $\eta_n(T)=\sum_{k\geq1}R_{V_{n,k}}TR_{V_{n,k}^*}$ for any $T\in (R(\mathfrak D))^{\prime}$ as in \cite{solel2}.
The following  properties  of $\beta_n(n\in\mathbb Z)$  are  given in  \cite{solel2}.  Symmetrical properties for $\{\eta_n:n\in\mathbb Z\}$  follow similarly.
\begin{proposition}\label{p49}(\cite[Lemm 2.4]{solel2}) Let $n,m\in\mathbb Z$.

(1) $\beta_n $ is a $*$-homomorphism from $(L(\mathfrak D))^{\prime} $ onto $L_{F_n}(L(\mathfrak D))^{\prime} $ and  a $*$-isomorphism from $L_{F_{-n}}(L(\mathfrak D))^{\prime} $ onto $L_{F_n}(L(\mathfrak D))^{\prime} $.

(2) For any projection $Q\in (L(\mathfrak D))^{\prime}$,
 we have
\begin{equation*}\beta_n(Q)=\sup\{L_UQL_{U^*}: U\in M_n \mbox{ is a partial isometry}\}.\end{equation*}

 (3)
 $\beta_n\beta_m(T)=L_{F_n}\beta_{n+m}(T)$, $\forall T\in (L(\mathfrak D))^{\prime}$.

(4) For any $T\in (L(\mathfrak D))^{\prime}$, $T\in R(\mathcal M)$ if and only if $\beta_n(T)=L_{F_n}T$ for all $n\in\mathbb Z$.
In particular, for any projection  $Q\in (L(\mathfrak D))^{\prime}$,
$Q\in R(\mathcal M)$ if and only if $\beta_n(Q)\leq Q$.

(5) $\beta_n(E_0)=E_n$, $\beta_n(E_m)=L_{F_n}E_{n+m}$ and $\beta_n(P_m)=L_{F_n}P_{n+m}.$
\end{proposition}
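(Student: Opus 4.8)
The plan is to derive all five statements from the module structure of the column families $\{U_{n,m}:m\ge1\}$, following the pattern of \cite[Lemma 2.4]{solel2}: by Theorem \ref{t48} the $M_n$ are the spectral subspaces of the periodic flow $\alpha$, so Solel's argument applies directly, but I would reprove it from the material at hand. The relations I would use throughout are $Q_{n,m}:=U_{n,m}^*U_{n,m}\in\mathfrak D$ with $U_{n,m}^*U_{n,k}=\delta_{mk}Q_{n,m}$ and $\sum_mU_{n,m}U_{n,m}^*=F_n$ (Corollary \ref{c45} and (\ref{fn})); $F_n\in\mathcal Z(\mathfrak D)$ and $F_nA=A$ for $A\in M_n$ (Lemma \ref{l46}); $M_nM_m\subseteq M_{n+m}$ and $M_n^*\mathfrak D M_n\subseteq\mathfrak D$ (Lemma \ref{l44}); $M_n=\oplus^{col}_{m\ge1}U_{n,m}\mathfrak D$ (Lemma \ref{l46}); and the grading $E_lL_{U_{n,m}}E_k=0$ for $l\ne n+k$, as in the proof of Lemma \ref{l47}.

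The technical core is the identity $L_A^*\beta_n(T)L_A=L_{A^*A}T$ for all $A\in M_n$ and $T\in(L(\mathfrak D))'$, which I would establish first: expanding $\beta_n$, the factors $A^*U_{n,m}$ and $U_{n,m}^*A$ lie in $M_{-n}M_n\subseteq\mathfrak D$, hence commute with $T$, and the sum telescopes to $L_{A^*F_nA}T=L_{A^*A}T$. The same bookkeeping shows that the series defining $\beta_n(T)$ converges (the summands have mutually orthogonal left and right supports dominated by $L_{U_{n,m}U_{n,m}^*}$), that $\beta_n$ maps $(L(\mathfrak D))'$ into itself (using that $M_n$ is a $\mathfrak D$-bimodule), and that $\beta_n$ is a normal $*$-homomorphism with $L_{F_n}\beta_n(T)=\beta_n(T)L_{F_n}=\beta_n(T)$ and $\beta_n(I)=L_{F_n}$; this gives the first half of (1). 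For (3) I would substitute the definition twice, use $U_{n,j}U_{m,k}\in M_{n+m}$, re-expand each such product in the column basis $\{U_{n+m,i}\}$ via Lemma \ref{l46}, and note that the cross terms collapse because the matrix coefficients lie in $\mathfrak D$ and commute with $T$, while the factor $L_{F_n}$ is produced by $\sum_{j,k}U_{n,j}U_{m,k}(U_{n,j}U_{m,k})^*=F_n$. Granting (3), surjectivity of $\beta_n$ onto $L_{F_n}(L(\mathfrak D))'$ and the $*$-isomorphism $L_{F_{-n}}(L(\mathfrak D))'\cong L_{F_n}(L(\mathfrak D))'$ follow from $\beta_{-n}\beta_n(T)=L_{F_{-n}}\beta_0(T)=L_{F_{-n}}T$, since $\beta_0=\mathrm{id}$ ($U_{0,1}=I$ being its only term).

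Parts (5) and (2) are then short. For (5): $L_{U_{n,m}}E_0L_{U_{n,m}^*}$ is the projection onto $U_{n,m}L^2(\mathfrak D)$, so summing over $m$ and using (\ref{42}), (\ref{44}) gives $\beta_n(E_0)=E_n$; since $L_{U_{n,j}}W_m\subseteq W_{n+m}$, a comparison of supports yields $\beta_n(E_m)=L_{F_n}E_{n+m}$, whence $\beta_n(P_m)=L_{F_n}P_{n+m}$. For (2): $\beta_n(Q)=\sum_mL_{U_{n,m}}QL_{U_{n,m}^*}$ is a sum of mutually orthogonal subprojections, hence their supremum, and for any partial isometry $U\in M_n$ the core identity gives $L_{UU^*}\beta_n(Q)L_{UU^*}=L_UQL_{U^*}$, so $L_UQL_{U^*}\le\beta_n(Q)$, forcing equality in the asserted supremum.

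For (4), the forward direction is immediate since $\beta_n(R_A)=\sum_mL_{U_{n,m}U_{n,m}^*}R_A=L_{F_n}R_A$. Conversely, from $\beta_n(T)=L_{F_n}T$ and the orthogonality of the ranges of the $L_{U_{n,m}}$ one extracts $L_{Q_{n,m}}TL_{U_{n,m}^*}=L_{U_{n,m}^*}T$; passing to adjoints and using $U_{n,m}Q_{n,m}=U_{n,m}$ gives $L_{U_{n,m}}T^*=T^*L_{U_{n,m}}$, and $T^*$ satisfies the same hypothesis ($L_{F_n}\in L(\mathfrak D)$ commutes with $T^*\in(L(\mathfrak D))'$), so $T$ commutes with every $L_{U_{n,m}}$, hence with $L(M_n)$ for all $n$, hence with $L(\mathcal M)=\vee\{L(M_n):n\in\mathbb Z\}$ by Theorem \ref{t42}; thus $T\in R(\mathcal M)$. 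The projection statement follows since $\beta_n(Q)\le Q$ for all $n$ forces $\beta_n(Q)=L_{F_n}Q$ — indeed $\beta_n(Q)\le L_{F_n}\wedge Q=L_{F_n}Q$ and $L_{F_n}Q=\beta_n\beta_{-n}(Q)\le\beta_n(Q)$ by monotonicity and (3) — after which the general case applies. The one step I expect to be genuinely fiddly, the remainder being bookkeeping, is getting the support correction $L_{F_n}$ right in the composition law (3) when $n$ and $m$ have opposite signs.
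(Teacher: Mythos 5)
The paper itself does not prove this proposition: it is quoted from Solel \cite[Lemma 2.4]{solel2}, the justification being that by Theorem \ref{t48} a type 1 subdiagonal algebra is $H^{\infty}(\alpha)$ for a periodic flow and the $M_n$ are its spectral subspaces, so Solel's lemma applies verbatim. Your reconstruction follows Solel's method, and most of the individual verifications you sketch (the core identity $L_A^*\beta_n(T)L_A=L_{A^*A}T$, well-definedness and the $*$-homomorphism property, parts (2), (4), and the reduction of (1) to (3)) are correct as outlined.

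There is, however, one concrete error sitting exactly at the crux. The identity you invoke for (3), $\sum_{j,k}U_{n,j}U_{m,k}(U_{n,j}U_{m,k})^*=F_n$, is false in general: in the upper triangular algebra $T_3\subseteq M_3$ (a type 1 subdiagonal algebra, cf.\ Remark \ref{r414}(3)) with $n=m=1$ one may take $U_{1,1}=e_{12}$, $U_{1,2}=e_{23}$, and the sum is $e_{13}e_{13}^*=E_{11}=F_1F_2$, whereas $F_1=E_{11}+E_{22}$. The correct identity is $\sum_{j,k}U_{n,j}U_{m,k}(U_{n,j}U_{m,k})^*=F_nF_{n+m}$, i.e.\ $\beta_n(L_{F_m})=L_{F_nF_{n+m}}$, which is precisely the $T=I$ instance of (3); it only reduces to $F_n$ when $F_n\leq F_{n+m}$ (e.g.\ in the $(1,1)$-form, where all $F_k=I$). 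Since your derivation of (1) and of the projection criterion in (4) routes through (3), and since the "fiddly" mixed-sign case you defer is exactly the nontrivial content here, the proposal as written has a genuine gap: the inequality $\geq F_nF_{n+m}$ is asserted (in an incorrect stronger form) but never argued. It can be closed with the tools already quoted: $\beta_n(L_{F_m})$ is a projection (as $\beta_n$ is a $*$-homomorphism) dominated by $L_{F_n}$ and supported under $L_{F_{n+m}}$, hence $\leq L_{F_nF_{n+m}}$; for the reverse, use $M_{-n}M_{n+m}\subseteq M_m$ and $F_mB=B$ for $B\in M_m$ (Lemmas \ref{l44} and \ref{l46}) to write $F_nU_{n+m,i}=\sum_{j,k}U_{n,j}U_{m,k}\bigl(U_{m,k}^*U_{n,j}^*U_{n+m,i}\bigr)$ with coefficients in $\mathfrak D$, which shows the range of $F_nF_{n+m}$ lies in the support of $\beta_n(L_{F_m})$. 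The same mixed-sign relation is what is hidden behind your "comparison of supports" for $\beta_n(E_m)=L_{F_n}E_{n+m}$ in (5), where the inclusion $F_nW_{n+m}\subseteq[\bigvee_jU_{n,j}W_m]$ needs $U_{n,j}^*W_{n+m}\subseteq W_m$, again from Lemma \ref{l44} together with Lemma \ref{l41}. With these two points repaired, your argument goes through and is a legitimate self-contained alternative to the paper's citation of Solel.
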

 \begin{corollary}\label{c410}
Let $\lim\limits_{m\to-\infty}F_m=F_-$ and $\lim\limits_{m\to+\infty}F_m=F_+$. Then we have $F_{\pm}\in \mathcal Z(\mathcal M)\cap \mathfrak D$.
\end{corollary}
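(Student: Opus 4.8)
The plan is to first verify that the limits $F_{\pm}$ exist and lie in $\mathfrak D$, and then to use Proposition \ref{p49}(4) to locate $L_{F_{\pm}}$ inside $R(\mathcal M)$, which will force $F_{\pm}$ into the center of $\mathcal M$.

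First, by Lemma \ref{l46} we have $F_n\in\mathcal Z(\mathfrak D)$ for every $n\in\mathbb Z$, and by the remarks preceding Lemma \ref{l46}, $F_0=I\geq F_1\geq F_2\geq\cdots$ while $\cdots\leq F_{-2}\leq F_{-1}\leq F_0$. Hence $\{F_m\}_{m\geq0}$ and $\{F_{-m}\}_{m\geq0}$ are decreasing sequences of projections in the von Neumann algebra $\mathcal Z(\mathfrak D)$, so they converge in the strong operator topology to their infima $F_{+}$ and $F_{-}$, which therefore belong to $\mathcal Z(\mathfrak D)\subseteq\mathfrak D$. In particular $L_{F_{\pm}}\in L(\mathcal M)$, and since $F_{\pm}$ commutes with $\mathfrak D$ we also have $L_{F_{\pm}}\in(L(\mathfrak D))^{\prime}$, so that $\beta_n(L_{F_{\pm}})$ makes sense.

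The key point I would use is the identity $L_{F_m}=\beta_m(I)$ for every $m\in\mathbb Z$, where $I$ denotes the identity of $(L(\mathfrak D))^{\prime}$. Indeed, since $\{U_{m,k}U_{m,k}^{*}\}_{k\geq1}$ are mutually orthogonal projections summing to $F_m$ (for $m\neq0$; the case $m=0$ is trivial because $U_{0,1}=I$ and $F_0=I$), one computes
\[\beta_m(I)=\sum_{k\geq1}L_{U_{m,k}}L_{U_{m,k}^{*}}=\sum_{k\geq1}L_{U_{m,k}U_{m,k}^{*}}=L_{F_m}.\]
Applying Proposition \ref{p49}(3) with $T=I$ then gives
\[\beta_n(L_{F_m})=\beta_n\bigl(\beta_m(I)\bigr)=L_{F_n}\beta_{n+m}(I)=L_{F_n}L_{F_{n+m}},\qquad\forall\,n,m\in\mathbb Z.\]
Now I would fix $n$ and let $m\to+\infty$: since $\beta_n$ is normal and $L_{F_m}$ decreases to $L_{F_{+}}$ while $L_{F_{n+m}}$ also decreases to $L_{F_{+}}$ (and $L_{F_n}$ is a fixed bounded operator), this yields $\beta_n(L_{F_{+}})=L_{F_n}L_{F_{+}}$ for all $n\in\mathbb Z$; letting $m\to-\infty$ gives $\beta_n(L_{F_{-}})=L_{F_n}L_{F_{-}}$ for all $n$. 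By Proposition \ref{p49}(4) this says exactly that $L_{F_{\pm}}\in R(\mathcal M)$. Combining with $L_{F_{\pm}}\in L(\mathcal M)$ and recalling $L(\mathcal M)^{\prime}=R(\mathcal M)$, we get $L_{F_{\pm}}\in L(\mathcal M)\cap L(\mathcal M)^{\prime}$, i.e. $F_{\pm}\in\mathcal Z(\mathcal M)$. Together with $F_{\pm}\in\mathfrak D$ from the first step, this gives $F_{\pm}\in\mathcal Z(\mathcal M)\cap\mathfrak D$.

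The argument is short once Proposition \ref{p49} is in hand; the only place requiring mild care is the passage to the limit, where one needs the normality of each $\beta_n$ (immediate from its defining series) and the fact that strong convergence is preserved under multiplication by the fixed operator $L_{F_n}$. I do not anticipate a real obstacle.
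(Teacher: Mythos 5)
Your proof is correct and follows essentially the same route as the paper: identify $L_{F_m}=\beta_m(I)$, use Proposition \ref{p49}(3) to get $\beta_n(L_{F_m})=L_{F_n}L_{F_{n+m}}$, pass to the limit $m\to\pm\infty$, and invoke Proposition \ref{p49}(4) to place $L_{F_{\pm}}$ in $R(\mathcal M)$, hence $F_{\pm}\in\mathcal Z(\mathcal M)\cap\mathfrak D$. Your added remarks on monotone convergence of the $F_m$ and on the normality of $\beta_n$ simply make explicit steps the paper leaves implicit.
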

\begin{proof} Since $F_n\in \mathcal Z(\mathfrak D)$,  we have $L_{F_n}\in (L(\mathfrak D))^{\prime}$. By Proposition \ref{p49}(3), $\beta_n(I)=L_{F_n}$ and $\beta_n\beta_m(I)=L_{F_n}\beta_{n+m}(I)$ for any $m$. This means that $\beta_n(L_{F_m})=L_{F_n}L_{F_{n+m}}$. By letting $m\to\pm \infty$, we get
$\beta_n(L_{F_{\pm}})=L_{F_n}L_{F_{\pm}}$ for any $n\in\mathbb Z$. By Proposition \ref{p49}(4), $L_{F_{\pm}}\in R(\mathcal M)=(L(\mathcal M))^{\prime}$, which implies that
$F_{\pm}\in \mathcal Z(\mathcal M)$.
\end{proof}
\begin{proposition}\label{p411} If $F_+=0$(resp. $F_-=0$), then there is a finite or infinite  nest $\mathcal N=\{Q_n: -\infty\leq n\leq 0\}\subseteq \mathcal M$(resp.
$\mathcal N=\{Q_n: 0\leq n\leq +\infty \}\subseteq \mathcal M$  such that $\mathfrak A= \mbox{alg}_{\mathcal M}\mathcal N$.
\end{proposition}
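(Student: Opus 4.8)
The plan is to exhibit $\mathfrak A$ as the nest subalgebra of the nest $\mathcal N=\{Q_n:-\infty\le n\le 0\}$ defined by $Q_0=F_0=I$, $Q_{-m}=F_m$ for $m\ge 1$, and $Q_{-\infty}=0$. Since $F_0=I\ge F_1\ge F_2\ge\cdots$ and $F_+=\lim_mF_m=0$, this is indeed a finite or infinite nest of projections in $\mathcal M$, whose atoms are $E_{-m}:=F_m-F_{m+1}\in\mathcal Z(\mathfrak D)$ with $\sum_{m\ge 0}E_{-m}=I$. Two preliminary observations I would record. First, $F_m$ is the (left) support projection of the ideal $\mathfrak A_0^m$, so $F_mL^2(\mathcal M)=[\mathfrak A_0^mL^2(\mathcal M)]=[\mathfrak M_m\mathcal M]$ (using $[H^2\mathcal M]=L^2(\mathcal M)$, as $h_0$ has full support, together with $\mathfrak M_m=\oplus_k^{col}U_{m,k}H^2$). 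Second, since $\mathfrak A_0$, hence $\mathfrak A_0^m$, is $\sigma^{\varphi}$-invariant (cf. the proof of Lemma \ref{l41}), each $F_m$, and so each atom $E_{-m}$, is fixed by $\sigma^{\varphi}$ and therefore lies in the centralizer of $\varphi$.

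For $\mathfrak A\subseteq\mbox{alg}_{\mathcal M}\mathcal N$: each $\mathfrak M_m$ is two-sided invariant for $\mathfrak A$, so for $A\in\mathfrak A$ we get $L_A([\mathfrak M_m\mathcal M])\subseteq[\mathfrak A\mathfrak M_m\mathcal M]\subseteq[\mathfrak M_m\mathcal M]=F_mL^2(\mathcal M)$, i.e. $(I-F_m)AF_m=0$; thus $A\in\mbox{alg}_{\mathcal M}\mathcal N$.

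The core of the argument — and the step I expect to be the main obstacle — is the reverse inclusion, which I would obtain from the ``shift'' property
\[
M_nE_{-m}\subseteq E_{-(m+n)}\,\mathcal M\qquad(n\in\mathbb Z,\ m\ge 0),
\]
with the convention $E_{-l}:=0$ for $l<0$. To prove it, write $M_n=\oplus_k^{col}U_{n,k}\mathfrak D$ (Lemma \ref{l46}), so it suffices to bound the left support $R$ of $U_{n,k}(F_m-F_{m+1})$. Put $R_l:=U_{n,k}F_lU_{n,k}^*$. Using $M_nM_l\subseteq M_{n+l}$ (Lemma \ref{l44}) and $U_{n,k}U_{n,k}^*,U_{n,k}^*U_{n,k}\in\mathfrak D$, one checks that each $R_l$ is a projection in $\mathfrak D$ and $U_{n,k}F_l=R_lU_{n,k}$, whence $R=R_m-R_{m+1}$ with $R_{m+1}\le R_m$. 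Next, $U_{n,k}\mathfrak M_l\subseteq\mathfrak M_{n+l}$: indeed $U_{n,k}U_{l,j}\in M_{n+l}$ and $[M_sH^2]\subseteq\mathfrak M_s$ (since $[M_sW_k]=[M_sM_kL^2(\mathfrak D)]\subseteq[M_{s+k}L^2(\mathfrak D)]=W_{s+k}$ by Lemmas \ref{l41}, \ref{l44}); hence $[U_{n,k}F_lL^2(\mathcal M)]=[U_{n,k}\mathfrak M_l\mathcal M]\subseteq[\mathfrak M_{n+l}\mathcal M]=F_{n+l}L^2(\mathcal M)$, that is $R_l\le F_{n+l}$. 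The symmetric estimate applied to $U_{n,k}^*$ shows $U_{n,k}^*F_{n+m+1}$ has left support $\le F_{m+1}$, so $R_mF_{n+m+1}=U_{n,k}F_m(F_{m+1}U_{n,k}^*F_{n+m+1})=R_{m+1}$; consequently $R=R_m-R_{m+1}=R_m(I-F_{n+m+1})\le F_{n+m}-F_{n+m+1}=E_{-(m+n)}$. As the atoms $E_{-l}$ are pairwise orthogonal, the shift property forces $E_{-m}M_nE_{-m}=0$ for all $n\ne 0$, and taking $\sigma$-weak closures with $\mathcal M=\bigvee\{M_n:n\in\mathbb Z\}$ (Theorem \ref{t42}) yields $E_{-m}\mathcal ME_{-m}\subseteq\mathfrak D$ for every $m\ge 0$.

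Finally I would assemble these ingredients. The diagonal of $\mbox{alg}_{\mathcal M}\mathcal N$ is $\{F_m:m\ge 0\}'\cap\mathcal M=\oplus_{m\ge 0}E_{-m}\mathcal ME_{-m}$, which by the previous paragraph — together with $\mathfrak D\subseteq\oplus_mE_{-m}\mathcal ME_{-m}$, valid since $F_m\in\mathcal Z(\mathfrak D)$ — equals $\mathfrak D$. Hence the block-diagonal conditional expectation $\Psi(x)=\sum_mE_{-m}xE_{-m}$ maps $\mathcal M$ onto $\mathfrak D$, and since each $E_{-m}$ lies in the centralizer of $\varphi$ we have $\varphi(\Psi(x))=\sum_m\varphi(xE_{-m})=\varphi(x)$; by uniqueness of the $\varphi$-preserving conditional expectation onto the $\sigma^{\varphi}$-invariant subalgebra $\mathfrak D$, $\Psi=\Phi$. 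A routine ``upper triangular'' computation then shows $\Phi=\Psi$ is multiplicative on $\mbox{alg}_{\mathcal M}\mathcal N$, whose diagonal is $\mathfrak D$ and which contains the $\sigma$-weakly dense subspace $\mathfrak A+\mathfrak A^*$; thus $\mbox{alg}_{\mathcal M}\mathcal N$ is a subdiagonal algebra with respect to $\Phi$ containing $\mathfrak A$. Since $\mathfrak A$ is maximal subdiagonal with respect to $\Phi$ (being type 1), $\mathfrak A=\mathfrak A_m$, whence $\mbox{alg}_{\mathcal M}\mathcal N\subseteq\mathfrak A$ and therefore $\mathfrak A=\mbox{alg}_{\mathcal M}\mathcal N$. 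The case $F_-=0$ follows by applying the above to the type 1 subdiagonal algebra $\mathfrak A^*$, for which the corresponding limit projection is $F_-$.
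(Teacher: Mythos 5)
Your proof is correct in substance, but it takes a genuinely different route from the paper. The paper's own argument is very short: it observes that $F_n\mathfrak A_0^n=\mathfrak A_0^n$, so that the projections onto $[\mathfrak A_0^{n}L^2(\mathcal M)]$ (which are exactly your $F_n=Q_{-n}$) form a nest with $\bigwedge_nQ_{-n}\leq F_+=0$, and then it simply invokes the triangular-form theorem \cite[Theorem 3.2]{jos2} to conclude $\mathfrak A=\mbox{alg}_{\mathcal M}\mathcal N$. You instead reprove the needed special case of that theorem inside the Section 4 machinery: the shift relation $M_nE_{-m}\subseteq E_{-(m+n)}\mathcal M$ (via Lemmas \ref{l41}, \ref{l44}, \ref{l46}), the identification of the diagonal of the nest algebra with $\mathfrak D$, the recognition of the block-diagonal expectation as $\Phi$, and the conclusion by maximality of $\mathfrak A$. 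What each buys: the paper's proof is two lines but rests on an external citation, while yours is self-contained and yields extra structural information (the nest consists precisely of the $F_m$, the atoms compress $\mathcal M$ into $\mathfrak D$, and $\Phi$ is the associated block-diagonal map). Two small repairs to your write-up: the phrase ``which contains the $\sigma$-weakly dense subspace $\mathfrak A+\mathfrak A^*$'' should say that $\mbox{alg}_{\mathcal M}\mathcal N+(\mbox{alg}_{\mathcal M}\mathcal N)^*\supseteq\mathfrak A+\mathfrak A^*$ is $\sigma$-weakly dense (the nest algebra itself of course does not contain $\mathfrak A^*$); and your displayed computation of the shift property only covers $m+n\geq0$, since for $m+n<0$ the bound $R_l\leq F_{n+l}$ would involve negative-index $\mathfrak M$'s for which $[M_s\mathfrak M_l]\subseteq\mathfrak M_{s+l}$ is not what your argument gives --- but that degenerate case is exactly $M_nE_{-m}=0$, which is immediate from Lemma \ref{l46} ($F_jA=A$ for $A\in M_p$, $0\leq j\leq p$) and is in any event not needed, since $E_{-m}M_{-n}E_{-m}=(E_{-m}M_nE_{-m})^*$ reduces everything to $n\geq1$.
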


\begin{proof} Note that $F_n\mathfrak M_n=\mathfrak M_n$ for any $n\geq0$. Then $F_n\mathfrak A_0^n=\mathfrak A_0^n$. This implies that
$F_n(L^2(\mathcal M))\supseteq [\mathfrak A_0^nL^2(\mathcal M)]$ for any $n\geq0$. In particular,  $\cap_{n\geq 1}[\mathfrak A_0^nL^2(\mathcal M)]\subseteq F_+(L^2(\mathcal M))=\{0\}$.  Now let $Q_{n}$ be  the projection   from $L^2(\mathcal M)$ onto $[\mathfrak A_0^{-n}L^2(\mathcal M)]$ for any $n<0$ and $Q_0=I$. Then  $Q_n\in\mathcal M$ with $\bigwedge_{n\leq 0} Q_n=0$.  By  \cite[Theorem 3.2]{jos2}, $\mathfrak A$ is the nest subalgebra with nest $\mathcal N=\{Q_n: -\infty\leq n\leq 0\}$. If $F_k=0$ for some $k\geq1$, then $\mathcal N$ is finite.  We may have similar result when $F_-=0$.
\end{proof}
 Proposition \ref{p411} states that if either $F_+$ or $F_-$ vanishes, then the type 1 subdiagonal algebra $\mathfrak A$ reduces to  a nest subalgebra  of the form  discussed in Section 3. In  general, we  deduce the following form decomposition  for a type 1 subdiagonal algebra, which enables us to better understand the nest subalgebra component and the analytic component of the type 1 subdiagonal algebra.
\begin{definition}Let $\mathfrak A$ be a type 1 subdiagonal algebra. Then   $\mathfrak A$ is said to have the $(i,j)$-form if $F_-=iI$ and  $F_+=jI$  for $i,j=0,1$.
\end{definition}\label{d412}
\begin{theorem}\label{t413} Let $\mathfrak A$ be a type 1 subdiagonal algebra.  Then there exist unique pairwise orthogonal projections $\{C_{ij}:i,j=0,1\}\subseteq \mathcal Z(\mathcal M)\cap \mathfrak D$  with $\sum_{i,j=0}^1C_{ij}=I$ such that either $C_{ij}=0$ or
 $C_{ij}\mathfrak A\subseteq C_{ij}\mathcal M$  is a type 1 subdiagonal algebra with respect to $\Phi|_{C_{ij}\mathcal M}$ with the $(i,j)$-form.
\end{theorem}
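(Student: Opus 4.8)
The plan is to extract the four projections directly from the central projections $F_\pm$ produced by Corollary~\ref{c410}. First I would set $F_- = \lim_{m\to-\infty}F_m$ and $F_+=\lim_{m\to+\infty}F_m$, both of which lie in $\mathcal Z(\mathcal M)\cap\mathfrak D$ by Corollary~\ref{c410}. The natural candidates are
\begin{equation*}
C_{00}=(I-F_-)(I-F_+),\quad C_{01}=(I-F_-)F_+,\quad C_{10}=F_-(I-F_+),\quad C_{11}=F_-F_+.
\end{equation*}
Since $F_-\geq F_+$ (because $F_m\leq F_{m+1}$ for $m<0$ and $F_m\geq F_{m+1}$ for $m\geq 0$, so all the $F_m$ with $m<0$ dominate $F_+$ and all with $m\geq 0$ are dominated by $F_-$; more precisely each $F_m$ for $m\le 0$ dominates $F_+=\bigwedge_{m\geq 0}F_m$ and is dominated by... one must just check $F_+\le F_-$ from $F_+\le F_0=I$ and $F_-\ge$ every $F_m$ with $m\le 0$, in particular $F_-\ge F_0=I$? no — $F_0=I$ so $F_-=I$; I need to be careful here). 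I should double-check the monotonicity: $F_0=I$, and for $n\ge 1$, $F_n\le F_{n-1}$, so $F_+\le I$; for $n<0$, $F_n\le F_{n+1}\le\cdots\le F_0=I$, so $F_-\le I$ as well, but there is no a priori relation forcing $F_+\le F_-$. Hence I would simply take the four projections as the products above using the Boolean algebra generated by $F_-$ and $F_+$ in $\mathcal Z(\mathcal M)\cap\mathfrak D$; they are pairwise orthogonal and sum to $I$ by construction, and each lies in $\mathcal Z(\mathcal M)\cap\mathfrak D$ since this set is a von Neumann subalgebra.

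Next I would verify that for each nonzero $C_{ij}$, the compression $C_{ij}\mathfrak A\subseteq C_{ij}\mathcal M$ is again a type~1 subdiagonal algebra with respect to the restricted expectation $\Phi|_{C_{ij}\mathcal M}$. Because $C_{ij}\in\mathcal Z(\mathcal M)\cap\mathfrak D$, compression by $C_{ij}$ is a normal $*$-homomorphism carrying $\mathfrak A$, $\mathfrak A_0$, $\mathfrak D$ to $C_{ij}\mathfrak A$, $C_{ij}\mathfrak A_0$, $C_{ij}\mathfrak D$; since $C_{ij}$ is central the Arveson axioms (multiplicativity of $\Phi$ on $\mathfrak A$ and $\sigma$-weak density of $\mathfrak A+\mathfrak A^*$) descend immediately, and maximality is inherited because the relevant ideal $\mathfrak A_0^n$ compresses to $(C_{ij}\mathfrak A)_0^n$. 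For the type~1 property I would invoke the characterization $\bigcap_{n\ge1}\mathfrak A_0^n=\{0\}$ from \cite[Theorem 3.1]{jj}: compressing by the central projection $C_{ij}$ gives $\bigcap_{n\ge1}(C_{ij}\mathfrak A_0)^n = C_{ij}\bigcap_{n\ge1}\mathfrak A_0^n=\{0\}$, so $C_{ij}\mathfrak A$ is type~1. The $F_\pm$ associated with the compressed algebra are then $C_{ij}F_\pm$, and by the defining choice of $C_{ij}$ these equal $0$ or $C_{ij}$ in the two slots, giving exactly the $(i,j)$-form.

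Finally, for uniqueness I would argue that the $(i,j)$-form of a type~1 subdiagonal algebra forces its own $F_\pm$ to be $iI$ and $jI$; hence if $\{C'_{ij}\}$ is another such family, then on $C'_{ij}\mathcal M$ we have $C'_{ij}F_-=iC'_{ij}$ and $C'_{ij}F_+=jC'_{ij}$ (using that the $F_\pm$ of a central compression are the compressions of the global $F_\pm$, which follows from the formula $F_n=\sum_{m\ge1}U_{n,m}U_{n,m}^*$ and the fact that the column-orthogonal families for $C'_{ij}\mathfrak A$ are $C'_{ij}U_{n,m}$). This pins down each $C'_{ij}$ as the corresponding Boolean combination of $F_-$ and $F_+$, so $C'_{ij}=C_{ij}$. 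The main obstacle I anticipate is the bookkeeping in this last step: one must be sure that the construction of $F_\pm$, which depends on a choice of column-orthogonal partial isometry families $\{U_{n,m}\}$, actually yields central projections that are intrinsic to $\mathfrak A$ (so that "the $(i,j)$-form" is a well-defined property), and that they transform correctly under central compression. Once one checks — as Lemma~\ref{l46} and Proposition~\ref{p49} essentially already do — that $F_n\in\mathcal Z(\mathfrak D)$ and that $L_{F_n}=\beta_n(I)$ is defined independently of the choice of the $U_{n,m}$, this intrinsicality is secured and the uniqueness argument goes through cleanly.
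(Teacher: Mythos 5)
Your proposal is correct and follows essentially the same route as the paper: you take the same Boolean combinations $C_{11}=F_-F_+$, $C_{01}=(I-F_-)F_+$, $C_{10}=F_-(I-F_+)$, $C_{00}=(I-F_-)(I-F_+)$ of the central projections from Corollary \ref{c410}, check that central compression sends $F_n$ to $C_{ij}F_n$ so each nonzero compression has the $(i,j)$-form, and prove uniqueness by showing any competing family satisfies $E_{ij}\leq C_{ij}$, exactly as in the paper (your extra remarks on the type 1 property via $\bigcap_{n\geq1}\mathfrak A_0^n=\{0\}$ and on the intrinsicality of $F_n$ only flesh out steps the paper treats as elementary).
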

\begin{proof} It is known that $F_{\pm}\in \mathcal Z(\mathcal M)\cap\mathfrak D$ by Corollary \ref{c410}.
 Put $C_{01}=F_+-F_-F_+$, $C_{10}=F_--F_-F_+$, $C_{11}=F_-F_+$ and $C_{00}=I-F_-\vee F_+=I-(C_{01}+C_{10}+C_{11})$. Then $C_{ij}\in \mathfrak D\cap\mathcal Z(\mathcal M)$ for $i,j=0,1$ by Lemme \ref{l46} and Corollary \ref{c410}.  Fixed  $i,j\in\{0,1\}$. if $C_{ij}\not=0$, then  $\mathfrak A_{ij}=C_{ij}\mathfrak A\subseteq C_{ij}\mathcal M$ is  a  type 1 subdiagonal algebra  with respect to $\Phi|_{C_{ij}\mathcal M}$. If we denote by $F_n^{ij}$  the projection defined as in (\ref{fn}) associated with $\mathfrak A_{ij}$, then $F^{ij}_n=C_{ij}\cap F_n$ and
$\lim\limits_{n\to\pm \infty}F_n^{ij}=C_{ij}F_{\pm}=F_{\pm}^{ij}$. Thus, $F_-^{01}=C_{01}F_-=0$, $F_+^{01}=C_{01}F_+=C_{01}$. This means that
$C_{01}\mathfrak A$ has $(0,1)$-form in $C_{01}\mathcal M$. Other cases are similar.

Assume  that there exist   pairwise orthogonal projections $\{E_{ij}:i,j=0,1\}\subseteq \mathcal Z(\mathcal M)\cap \mathfrak D$  with $\sum_{i,j=0}^1E_{ij}=I$ such that either $E_{ij}=0$ or
 $E_{ij}\mathfrak A\subseteq E_{ij}\mathcal M$  is a subdiagonal algebra with respect to $\Phi|_{E_{ij}\mathcal M}$ with $(i,j)$-form.  It is elementary that $(C\mathfrak A)_0=C\mathfrak A_0$ for any projection $C\in \mathcal Z(\mathcal M)\cap \mathfrak D$. Then $(C\mathfrak A)_0^n =C\mathfrak A_0^n$ and $CF_n$ is the projection defined as in (\ref{fn}) associated with $C\mathfrak A$. Thus $E_{01}F_-=\lim\limits_{n\to-\infty}E_{01}F_n=0$ and
 $E_{01}F_+=\lim\limits_{n\to+\infty}E_{01}F_n=E_{01}.$  Hence $E_{01}F_-=0$ and $E_{01}F_+=E_{01}$. It follows that $E_{01}\leq C_{01}$. By the same way, We easily have that $E_{ij}\leq C_{ij}$. Thus $E_{ij}=C_{ij}$ for any $i,j\in\{0,1\}$.
\end{proof}
  \begin{remark}\label{r414} (1) If $\mathfrak A$ has the $(1,1)$-form, then $F_n=I$ for all $n\in \mathbb Z$. For example, if $\mathfrak D$ is a factor, then $\mathfrak A$ has the $(1,1)$-form. In particular, $H^{\infty}(\mathbb T)$ has the $(1,1)$ form.

  (2)
  If $\mathfrak A$ has $(0,1)$-form, then $F_-=0$ and $F_+=I$. Since for any $n\geq1$, $F_n\geq F_+=I$, we have $\mathfrak M_n\not=\{0\}$. It follows  from Proposition \ref{p411} that there exists an infinite increasing  nest
   $\mathcal N=\{Q_n: 0\leq n\leq+\infty\}$ such that $\mathfrak A=\mbox{alg}_{\mathcal M}\mathcal N$. Similarly, if $\mathfrak A$ has $(1,0)$-form, then $F_-=I$ and $F_+=0$ and  there exists an infinite increasing  nest
   $\mathcal N=\{Q_n: -\infty \leq n\leq0\}$ such that $\mathfrak A=\mbox{alg}_{\mathcal M}\mathcal N$. For example, let $\mathcal H=\vee\{e_n:n\geq 0\}$, where $\{e_n:n\geq 0\}$ is an orthonormal  basis of $\mathcal H$. Denote by  $Q_{-n}$  the orthogonal projection from $\mathcal H$ onto $\vee\{e_k:k\geq n\}$ for any $n\geq0$ and $Q_{-\infty}=\{0\}$.  Then $\mbox{alg}\{Q_n:-\infty\leq n\leq0\}\subseteq \mathcal B(\mathcal H)$ has the $(1,0)$-form, while $\left(\mbox{alg}\{Q_n:-\infty\leq n\leq0\}\right)^*\subseteq \mathcal B(\mathcal H)$ has the $(0,1)$-form.

   (3) If $\mathfrak A$ has $(0,0)$-form, then $F_-=0$ and $F_+=0$.  In this case,  there exists a finite nest $\mathcal N$ such that
     $\mathfrak A=\mbox{alg}_{\mathcal M}\mathcal N$,  or we may choose both nests $\mathcal N_-=\{Q_n: -\infty\leq n\leq 0\}\subseteq \mathcal M$ and $\mathcal N_+=\{R_n:0\leq n\leq+\infty\}\subseteq \mathcal M$ such that $\mathfrak A=\mbox{alg}_{\mathcal M}\mathcal N_-=\mbox{alg}_{\mathcal M}\mathcal N_+$.    Let $M_n$ be the algebra of all complex $n\times n$ matrices  and $T_n$ the algebra of all upper triangular matrices for any $n\geq 2$.  Then $T_n\subseteq M_n$ is a subdiagonal algebra of  the $(0,0)$-form with a finite nest for all $n\geq2$.
      Put $\mathcal M=\oplus_{n\geq2}M_n$ and $\mathfrak A=\oplus_{n\geq 2}T_n$. It is elementary that $\mathfrak A\subseteq \mathcal M$ is a type 1 subdiagonal algebra of the $(0,0)$-form with an infinite nest.
       \end{remark}
  To consider a noncommutative  operator-theoretic variant of the Corona theorem for a type 1 subdiagonal algebra in general, we give  the following expectation motivated by \cite[Theorem 4.6]{solel2}, which may  be regarded as a noncommutative analogues of Arveson's expectation theorem(\cite[Proposition 5.1]{arv3}).
\begin{theorem}\label{t415}
Let $\mathfrak A$ be a type 1 subdiagonal algebra with  the $(1,1)$-form.  Then there is a conditional expectation $\Psi$ from $(L(\mathfrak D))^{\prime}$ onto $R(\mathcal M)$   such that

  (1) $\Psi((L(\mathfrak D))^{\prime}\cap \mbox{alg}\{P_n:n\in\mathbb Z\})=R(\mathfrak A)$;

   (2) $\Psi(P_n)=I$ for any $n\in \mathbb Z$;

   (3 )$\Psi(AP_n)=\Psi(A)$ for any $n\in\mathbb Z$ and
   $A\in (L(\mathfrak D))^{\prime}$.
 \end{theorem}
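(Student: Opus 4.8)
The plan is to build $\Psi$ as a limit (in an appropriate topology) of averages of the maps $\beta_n$ introduced before Proposition~\ref{p49}, exactly as in Solel's construction \cite[Theorem 4.6]{solel2}. Since $\mathfrak A$ has the $(1,1)$-form we have $F_n=I$ for all $n\in\mathbb Z$ by Remark~\ref{r414}(1), so by Proposition~\ref{p49}(1) each $\beta_n$ is a genuine $*$-automorphism of $(L(\mathfrak D))^{\prime}$, and by Proposition~\ref{p49}(3) the family $\{\beta_n:n\in\mathbb Z\}$ forms a one-parameter group: $\beta_n\beta_m=\beta_{n+m}$ and $\beta_0=\mathrm{id}$. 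First I would observe that $R(\mathcal M)=(L(\mathcal M))^{\prime}\subseteq(L(\mathfrak D))^{\prime}$ is precisely the fixed-point algebra of this $\mathbb Z$-action, by Proposition~\ref{p49}(4). The natural candidate for $\Psi$ is then a conditional expectation onto the fixed-point algebra obtained by averaging. Concretely, for $T\in(L(\mathfrak D))^{\prime}$ consider the Cesàro-type averages $\frac{1}{2K+1}\sum_{n=-K}^{K}\beta_n(T)$; one shows these have a $\sigma$-weak cluster point, and using an invariant mean / amenability of $\mathbb Z$ one produces a well-defined $\beta$-invariant unital completely positive projection $\Psi$ onto $R(\mathcal M)$. (Alternatively, invoke Solel's result directly, since Theorem~\ref{t48} identifies $\mathfrak A$ with $H^\infty(\alpha)$ for the periodic flow $\alpha$, and Solel's $(1,1)$-form hypothesis matches ours.)

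Next I would verify the three listed properties. For (2): $P_n$ is the projection onto $\mathfrak M_n$ (for $n\ge 0$) and more generally onto $\oplus_{m\ge n}W_m$; by Proposition~\ref{p49}(5), $\beta_k(P_n)=L_{F_k}P_{n+k}=P_{n+k}$ since $F_k=I$. Hence the averages $\frac{1}{2K+1}\sum_{m=-K}^{K}P_{n+m}$ converge $\sigma$-weakly to $I$ (the $W_m$'s span $L^2(\mathcal M)$, so $P_{n+m}\to I$ $\sigma$-weakly as $m\to-\infty$ along the tail, and $P_{n+m}\to 0$ as $m\to+\infty$; the Cesàro mean still tends to $I$ because $P_j\nearrow I$ as $j\to-\infty$ and $P_j\searrow 0$ as $j\to+\infty$, so in fact I should be a bit careful and instead use that $\beta_k(P_n)=P_{n+k}$ together with $\Psi=\Psi\circ\beta_k$, giving $\Psi(P_n)=\Psi(\beta_k(P_n))=\Psi(P_{n+k})$ for all $k$, so $\Psi(P_n)$ is independent of $n$; then letting $n\to-\infty$ and using $\Psi$ normal and $P_n\nearrow I$ forces $\Psi(P_n)=I$). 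For (3): write $\Psi(AP_n)$; since $AP_n-A = A(P_n-I) = -A\,(I-P_n)$ and $I-P_n$ is the projection onto $\oplus_{m<n}W_m$, I would show $\Psi$ kills $A(I-P_n)$ by the same shift argument — $\beta_k$ moves $I-P_n$ to $I-P_{n+k}$, and averaging over $k\to-\infty$ sends $\Psi(I-P_n)$ to $0$; combined with a Cauchy–Schwarz estimate for the completely positive map $\Psi$ this gives $\Psi(A(I-P_n))=0$. For (1): $(L(\mathfrak D))^{\prime}\cap\mathrm{alg}\{P_n:n\in\mathbb Z\}$ consists of those $T$ with $(I-P_n)TP_n=0$ for all $n$; using $\beta_k(T)\in$ the same algebra and the fact that $R(\mathcal M)\cap\mathrm{alg}\{P_n\}$ corresponds under $A\mapsto R_A$ exactly to $R(\mathfrak A)$ (because $A\in\mathfrak A$ iff $(I-P_0)AP_0=0$ iff, by two-sided invariance of $P_n(L^2(\mathcal M))$ from Lemma~\ref{l44}, $(I-P_n)AP_n=0$ for all $n$, and on the right side $R_A$ commutes with the left-multiplication projections appropriately), I conclude $\Psi$ maps this intersection into $R(\mathfrak A)$; surjectivity onto $R(\mathfrak A)$ is clear since $\Psi$ fixes $R(\mathcal M)\supseteq R(\mathfrak A)$ and $R(\mathfrak A)\subseteq\mathrm{alg}\{P_n\}$.

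The main obstacle I anticipate is the construction step itself: showing that the averaging procedure actually converges to a \emph{normal} conditional expectation (not merely a state-dependent or non-normal one), and that it is genuinely a projection onto all of $R(\mathcal M)$ rather than some proper subalgebra. Amenability of $\mathbb Z$ gives an invariant mean and hence a $\beta$-invariant u.c.p.\ map $\Psi$ fixing the fixed-point algebra pointwise, but identifying the range exactly with $R(\mathcal M)$ requires knowing that every $\beta$-fixed element of $(L(\mathfrak D))^{\prime}$ lies in $R(\mathcal M)$ — this is exactly Proposition~\ref{p49}(4), so that part is handed to us. Normality is the delicate point; I would obtain it either by appealing directly to Solel's theorem \cite[Theorem 4.6]{solel2} via the identification $\mathfrak A=H^\infty(\alpha)$ from Theorem~\ref{t48}, or by exploiting the explicit spatial form of $\beta_n$ (conjugation by the partial isometries $L_{U_{n,m}}$) together with $\sigma$-weak compactness of the unit ball to extract a normal limit. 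The remaining verifications (2), (3), and (1) are then routine consequences of the shift-covariance $\Psi\circ\beta_k=\Psi$ and Proposition~\ref{p49}(5).
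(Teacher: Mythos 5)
There is a genuine gap, and it sits exactly where you flagged it: the construction step. Your plan averages the full $\mathbb Z$-action $\{\beta_n\}_{n\in\mathbb Z}$ (Ces\`aro means or an invariant mean on $\mathbb Z$). Such a $\Psi$ need not satisfy property (2) at all: since $\beta_k(P_m)=P_{m+k}$ with $P_{m+k}\nearrow I$ as $k\to-\infty$ and $P_{m+k}\searrow 0$ as $k\to+\infty$, a symmetric Ces\`aro average of $\langle\beta_k(P_m)x,y\rangle$ tends to $\tfrac12\langle x,y\rangle$, so the resulting map would give $\Psi(P_m)=\tfrac12 I$; a general invariant mean on $\mathbb Z$ leaves $\Psi(P_m)$ undetermined between $0$ and $I$. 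Your repair of (2) — ``$\Psi(P_n)$ is independent of $n$, then let $n\to-\infty$ and use $\Psi$ normal'' — cannot be carried out, because no normal conditional expectation from $(L(\mathfrak D))^{\prime}$ onto $R(\mathcal M)$ exists in general: already in the classical case $\mathcal M=L^{\infty}(\mathbb T)$, $\mathfrak D=\mathbb C I$, one has $(L(\mathfrak D))^{\prime}=\mathcal B(L^2(\mathbb T))$ and $R(\mathcal M)$ is a diffuse masa, which admits no normal expectation from $\mathcal B(L^2(\mathbb T))$. The same defect propagates to your argument for (3), which rests on $\Psi(I-P_n)=0$, i.e.\ on (2). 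Extracting a ``normal limit'' by $\sigma$-weak compactness of bounded balls does not help either: compactness gives cluster points of nets of operators, not normality of the limiting map.

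The paper's proof avoids this by a deliberately one-sided construction: it fixes a Banach limit $\Lambda$ on $\mathbb Z_+$ and sets $\langle\Psi(A)x,y\rangle=\Lambda\bigl(n\mapsto\langle\beta_{-n}(A)x,y\rangle\bigr)$, i.e.\ it averages only over the backward shifts, under which $\beta_{-n}(P_m)=P_{m-n}\nearrow I$. Then the sequence $\langle\beta_{-n}(P_m)x,y\rangle$ actually converges to $\langle x,y\rangle$, and a Banach limit reproduces genuine limits, which yields (2) with no normality needed; (3) then follows from (2) by the Schwarz inequality $\Psi(A(I-P_n))^*\Psi(A(I-P_n))\le\|A\|^2\Psi(I-P_n)=0$. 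Translation invariance of $\Lambda$ (including the right-translation form) is what gives $\beta_m\circ\Psi=\Psi$ for all $m\in\mathbb Z$, hence range in $R(\mathcal M)$ via Proposition \ref{p49}(4), and Tomiyama's theorem upgrades the norm-one positive projection to a conditional expectation. Your treatment of (1) and your identification of $R(\mathcal M)$ as the fixed-point algebra are in line with the paper (the paper concludes $\Psi(A)\in R(\mathfrak A)$ from $\Psi(A)P_0=P_0\Psi(A)P_0$ and \cite[Theorem 2.2]{js}), but the construction and the proofs of (2) and (3) need to be replaced by the one-sided Banach-limit argument (or an invariant mean chosen so as to extend one-sided limits), not by an appeal to normality.
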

\begin{proof}
 (1)
Let $Z_+=\{n\in\mathbb Z: n\geq 0\}$ be the semigroup of nonnegative integers and let $\Lambda $ be a Banach limit on $\mathbb Z_+$. It is known that $\Lambda$ has the translation invariance, that is, $\forall g \in \ell^{\infty}(\mathbb Z_+)$, $\Lambda(g)=\Lambda (g_k)$, where $g_k(n)=g(k+n)$, $\forall k,n\in\mathbb Z_+$. This implies that $\Lambda $ also has  the right translation invariance, that is, for any $k< 0$, $\Lambda(g)=\Lambda (g_k)$,
   where $g_k(n)=\xi_n$ for some $\xi_n\in\mathbb C$ if $n<-k$, and  $g_k(n)=g(n+k)$ for all $n\geq -k$.

   Since $\mathfrak A$  has  the $(1,1)$-form,  $F_n=I$ and $\beta_n$ is a $*$-isomorphism for any $n\in\mathbb Z$.
For any $x,y\in L^2(\mathcal M)$, $A\in (L(\mathfrak D))^{\prime}$ and $n\in\mathbb Z_+$, put
  $g(A,x,y)(n)=\langle \beta_{-n}(A)x,y \rangle$.  It is trivial that  $\|g(A,x,y)\|\leq \|A\|\|x\|\|y\|$.  Then $g(A,x,y)\in \ell^{\infty}(\mathbb Z_+)$. We now
  define $[x,y]=\Lambda(g(A,x,y))$. It is trivial that $[\cdot,\cdot]$ is a bounded sesquilinear   form on $L^2(\mathcal M)$. Thus there is a bounded linear operator $\Psi(A)$ on $L^2(\mathcal M)$ such that $[x,y]=\langle \Psi(A)x,y\rangle$, $\forall x,y\in L^2(\mathcal M)$ and $\|\Psi(A)\|\leq \|A\|$.  For any unitary operator  $U\in L(\mathfrak D)$, $g(A,Ux,Uy)(n)=\langle \beta_{-n}(A)Ux,Uy\rangle=\langle \beta_{-n}(A)x,y\rangle=g(A,x,y)(n)$. This means that
  $$\langle U^*\Psi(A)Ux,y\rangle=\langle \Psi(A)Ux,Uy\rangle=\Lambda(g(A,Ux,Uy))=\Lambda(g(A,x,y))=\langle \Psi(A)x,y\rangle.$$ Thus $U^*\Psi(A)U=\Psi(A)$ and consequently,  $\Psi(A)\in (L(\mathfrak D))^{\prime}$.

  On the other hand, since $ \forall m\in \mathbb Z, A\in (L(\mathfrak D))^{\prime}$,
 $  \beta_m(A)=\sum_{k\geq1}U_{m,k}AU_{m,k}^*,$
    we have
   \begin{equation}\label{f38}
   U_{m,k}U_{m,k}^*\beta_m(A)=U_{m,k}AU_{m,k}^*, \ \ \forall m\in \mathbb Z, \forall A\in (L(\mathfrak D))^{\prime}.
   \end{equation}
    By formula (\ref{f38}) and Proposition \ref{p49}(3), we now have that
  \begin{align}\label{f39}
   &\ \ \ \ g(A, U_{m,k}^*x,U_{m,k}^*y)(n)=\langle \beta_{-n}(A)U_{m,k}^*x,U_{m,k}^*y\rangle\\
 \nonumber &=\langle U_{m,k}\beta_{-n}(A)U_{m,k}^*x,y\rangle=\langle U_{m,k}U_{m,k}^*\beta_m(\beta_{-n}(A))x,y\rangle\\
 \nonumber &=\langle L_{F_n}\beta_{m-n}(A)x,U_{m,k}U_{m,k}^*y\rangle=\langle \beta_{m-n}(A)x,U_{m,k}U_{m,k}^*y\rangle\\
 \nonumber &= \langle \beta_{-(-m+n)}(A)x,U_{m,k}U_{m,k}^*y\rangle=g_{-m}(A,x,U_{m,k}U_{m,k}^*y)(n),
  \end{align}
    where $g_{k}$ is a  translation of $g$ as above. Since $\Lambda $ is translation invariant, by formula (\ref{f39}),
  \begin{align*}\label{f310}
  &\ \ \ \ \langle U_{m,k}\Psi(A)U_{m,k}^*x,y\rangle=\langle \Psi(A)U_{m,k}^*x,U_{m,k}^*y\rangle\\
 \nonumber &=\Lambda(g(A,U_{m,k}^*x,U_{m,k}^*y))
  =\Lambda(  g_{-m}(A,x,U_{m,k}U_{m,k}^*y))\\
  \nonumber &=\Lambda(g(A,x, U_{m,k}U_{m,k}^*y))=\langle \Psi(A)x,U_{m,k}U_{m,k}^*y\rangle\\
  \nonumber &=\langle U_{m,k}U_{m,k}^*\Psi(A)x,y\rangle.
  \end{align*}
  It follows that
  $U_{m,k}\Psi(A)U_{m,k}^*=U_{m,k}U_{m,k}^*\Psi(A)$ for all $m\in\mathbb Z$ and $k\geq1$.  By summing both sides of this equation,
  we have
  \begin{equation}\label{f311}
  \beta_m(\Psi(A))=\sum_{k\geq 1}U_{m,k}\Psi(A)U_{m,k}^*=\sum_{k\geq1}U_{m,k}U_{m,k}^*\Psi(A)=F_m\Psi(A)=\Psi(A).\end{equation}
  Thus $\Psi(A)\in R(\mathcal M)$ by Proposition \ref{p49}(4). In particular, $\Psi\circ \Psi=\Psi$ from (\ref{f311}). Moreover, since $\beta_n(R_A)=F_nR_A=R_A$ for any $R_A\in R(\mathcal M)$ and $n\in\mathbb Z$,  $\Psi$ is a positive projection from $(L(\mathfrak D))^{\prime} $ onto $R(\mathcal M)$.  Hence is a conditional expectation by  \cite[Theorem 1]{to}.

  Take any $A\in  (L(\mathfrak D))^{\prime}\cap \mbox{alg}\{P_n: n\in \mathbb Z\}$ and $n,m\in\mathbb Z$. Since $\beta_n(P_m)=L_{F_n}P_{n+m}=P_{n+m}$ by Proposition \ref{p49}(5),
 \begin{align*}
 &\ \ \ \ \beta_n(A)P_m=\beta_n(A)\beta_n(P_{m-n})=\beta_n(AP_{m-n})=\beta_n(P_{m-n}AP_{m-n})\\
 &=\beta_n(P_{n-m})\beta_n(A)\beta_n(P_{n-m})=P_m\beta_n(A)P_m,
 \end{align*} which  implies that
 $g(A,P_mx,y)(n)=g(A, P_mx,P_my)(n)$. Thus $\langle\Psi(A)P_mx,y\rangle=\langle \Psi(A)P_mx,P_my\rangle$ and hence $\Psi(A)P_m=P_m\Psi(A)P_m$.  In particular, $\Psi(A)P_0=P_0\Psi(A)P_0$.  By \cite[Theorem 2.2]{js}, $\Psi(A)\in R(\mathfrak A)$.

 (2) Since $\beta_{-n}(P_m)=P_{m-n}$, $n,m\in\mathbb Z$ by Proposition \ref{p49}(5) again, $g(P_m,x,y)(n)=\langle \beta_{-n}(P_m)x,y\rangle=\langle P_{m-n}x,y\rangle$ and $\lim\limits_{n\to+\infty}\langle P_{m-n}x,y\rangle=\langle x,y\rangle$. Thus $\Lambda(g(P_m,x,y)=\langle x,y\rangle$. Hence
 $\langle \Psi(P_m)x,y\rangle=\Lambda(g(P_m,x,y))=\langle x,y\rangle$. This implies that  $\Psi(P_m)=I$.

 (3) This is a direct consequence of Schwarz inequality. In fact,
 $$(\Psi(A(I-P_n)))^*\Psi(A(I-P_n)\leq \Psi((I-P_n)A^*A(I-P_n))\leq\|A\|^2\Psi(I-P_n)=0.$$ Thus $\Psi(AP_n)=\Psi(A)$, $\forall A\in (L(\mathfrak D))^{\prime}$, $\forall n\in \mathbb Z$.
\end{proof}
\begin{remark}\label{r416}  We also can obtain
  a conditional expectation $\Gamma:(R(\mathfrak D))^{\prime}\to L(\mathcal M)$    with  similar properties as in Theorem \ref{t415} by use of properties of $\{\eta_n:n\in\mathbb Z\}$.
 \end{remark}

\section{A noncommutative    operator-theoretic  variant of  the Corona theorem}
We consider a noncommutative    operator-theoretic  variant of  the Corona theorem for type 1 subdiagonal algebras in general.
We firstly give a noncommutative analogues of \cite[Proposition 5.2]{arv3}.
Let $P_0(L(\mathfrak D))^{\prime}P_0$(resp. $P_0(R(\mathfrak D))^{\prime}P_0$) be the von Neumann algebra of the restriction of $(L(\mathfrak D))^{\prime}$(resp. $R(\mathfrak D)$) on $H^2$.
 $\mathfrak L=\{T_A: A\in \mathfrak A\}$ and $\mathfrak R=\{t_A:A\in\mathfrak A\}$ denote the left and right analytic Toeplitz operator algebras respectively. It is elementary that  $t_A\in P_0(L(\mathfrak D))^{\prime}P_0$(resp. $T_A\in P_0(R(\mathfrak D))^{\prime}P_0$).  Since $P_0P_n=P_n$ for all $n\geq1$,  we still denote by $P_n$ the restriction of $P_n$ on $H^2$ for all $n\geq1$.
 \begin{proposition}\label{p51}Let $\mathfrak A$ be a type 1 subdiagonal algebra  with the  $(1,1)$-form. Then there exists a positive linear projection  $\psi$  from $P_0(L(\mathfrak D))^{\prime}P_0|_{H^2}$ onto the space  $\{t_A: A\in \mathcal M\}$  of all  right Toeplitz  operators  such that

 (1) $\psi(I)=I,$ $\|\psi\|=1$,

 (2) $\psi \left(P_0(L(\mathfrak D))^{\prime}P_0\cap\mbox{alg}\{P_n: n\geq1\}\right)\subseteq \mathfrak R$,

  (3)
    $\psi(t_BA)=t_B\psi(A)$, $\forall A\in P_0(L(\mathfrak D))^{\prime}P_0\cap\mbox{alg}\{P_n: n\geq1\}$, $\forall B\in\mathfrak A$.
 \end{proposition}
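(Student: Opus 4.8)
The plan is to obtain $\psi$ by pushing the conditional expectation $\Psi\colon (L(\mathfrak D))'\to R(\mathcal M)$ of Theorem \ref{t415} through the compression to $H^2$. The key observation is that $P_0$, the projection onto $H^2$, lies in $(L(\mathfrak D))'$ (since $H^2$ is invariant under $L_D$ and $L_{D^*}$ for every $D\in\mathfrak D\subseteq\mathfrak A$); hence $P_0(L(\mathfrak D))'P_0$ is actually a $*$-subalgebra of $(L(\mathfrak D))'$ and $X\mapsto X|_{H^2}$ is a $*$-isomorphism of it onto $P_0(L(\mathfrak D))'P_0|_{H^2}$. So, given $Y\in P_0(L(\mathfrak D))'P_0|_{H^2}$, I would write $Y=X|_{H^2}$ with $X\in P_0(L(\mathfrak D))'P_0$, form $\Psi(X)=R_A$ for the unique $A\in\mathcal M$, and set $\psi(Y):=P_0\Psi(X)P_0|_{H^2}=t_A$. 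Since $\Psi$ is a positive contractive projection and compression is positive and contractive, so is $\psi$; only $(1)$--$(3)$ and surjectivity need checking. Throughout I would use repeatedly that $H^2$ is a right $\mathfrak A$-module (take $\theta=1$ in $H^2=[h_0^{\frac12}\mathfrak A]_2$ and use $\mathfrak AB\subseteq\mathfrak A$), so that $R_BH^2\subseteq H^2$ for $B\in\mathfrak A$; equivalently $t_B=R_B|_{H^2}$, i.e.\ $t_B=P_0R_BP_0=R_BP_0$ as an operator on $L^2(\mathcal M)$, and consequently $t_Bt_A=t_{AB}$ for all $A,B\in\mathfrak A$.

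For $(1)$ and surjectivity: the unit of $P_0(L(\mathfrak D))'P_0$ is $P_0$, and $\Psi(P_0)=I$ by Theorem \ref{t415}(2), so $\psi(I_{H^2})=I_{H^2}$; being positive and unital, $\|\psi\|=1$. For any $A\in\mathcal M$ the operator $t_A$ corresponds to $P_0R_AP_0\in P_0(L(\mathfrak D))'P_0$, and since $\Psi(P_0Z)=\Psi(ZP_0)=\Psi(Z)$ for all $Z\in(L(\mathfrak D))'$ — which follows from Theorem \ref{t415}(3) with $n=0$ together with $\Psi(Z^*)=\Psi(Z)^*$ — one gets $\Psi(P_0R_AP_0)=\Psi(R_A)=R_A$, hence $\psi(t_A)=t_A$. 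Thus $\psi$ is a projection and its range is exactly $\{t_A:A\in\mathcal M\}$, the inclusion $\subseteq$ being clear because $\Psi$ takes values in $R(\mathcal M)$.

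For $(2)$: suppose $X\in P_0(L(\mathfrak D))'P_0$ with $X|_{H^2}\in\mbox{alg}\{P_n:n\ge1\}$, i.e.\ $(I-P_n)XP_n=0$ for all $n\ge1$, where $X=P_0XP_0$. I claim $X$, as an operator on $L^2(\mathcal M)$, already lies in $\mbox{alg}\{P_n:n\in\mathbb Z\}$. Indeed, for $n\ge1$ write $(I-P_n)XP_n=(I-P_0)XP_n+(P_0-P_n)XP_n$; the first term vanishes because $X=P_0XP_0$ and the second by hypothesis. For $n=0$ it is immediate, and for $n\le-1$ one has $P_n\ge P_0$, so $P_nX=X=XP_n$ and the expression vanishes. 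By Theorem \ref{t415}(1), $\Psi(X)\in R(\mathfrak A)$, say $\Psi(X)=R_A$ with $A\in\mathfrak A$, whence $\psi(X|_{H^2})=t_A\in\mathfrak R$.

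For $(3)$: take $X$ as in $(2)$, choose $A\in\mathfrak A$ with $\Psi(X)=R_A$, and let $B\in\mathfrak A$. Because $t_B=R_BP_0$ on $L^2(\mathcal M)$ and $X=P_0XP_0$, the operator $t_BX$ equals $R_BX$; applying the $R(\mathcal M)$-bimodule property of the conditional expectation $\Psi$ gives $\Psi(t_BX)=\Psi(R_BX)=R_B\Psi(X)=R_BR_A=R_{AB}$, so $\psi(t_B\,X|_{H^2})=t_{AB}=t_Bt_A=t_B\,\psi(X|_{H^2})$, using $t_Bt_A=t_{AB}$ for $A,B\in\mathfrak A$. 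I expect the genuinely delicate points to be the preliminary structural input — that $P_0\in(L(\mathfrak D))'$ and that right analytic Toeplitz operators are precisely restricted right multiplications (so that $t_Bt_A=t_{AB}$ for analytic symbols) — together with the bookkeeping in $(2)$ showing that membership in the truncated nest algebra on $H^2$ forces membership in the full bilateral nest algebra on $L^2(\mathcal M)$; everything else is a direct transcription of the properties of $\Psi$ recorded in Theorem \ref{t415}.
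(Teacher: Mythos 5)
Your proposal is correct and follows essentially the same route as the paper: define $\psi$ by compressing the conditional expectation $\Psi$ of Theorem \ref{t415} to $H^2$ (using $P_0\in(L(\mathfrak D))'$ and $t_A=P_0R_AP_0|_{H^2}$), and then read off (1)--(3) from the properties of $\Psi$ together with $\Psi(ZP_0)=\Psi(P_0Z)=\Psi(Z)$, the $R(\mathcal M)$-module property, and $t_Bt_A=t_{AB}$ for analytic symbols. Your explicit verification that membership in $\mathrm{alg}\{P_n:n\geq1\}$ on $H^2$ forces membership in $\mathrm{alg}\{P_n:n\in\mathbb Z\}$ on $L^2(\mathcal M)$ fills in a step the paper only asserts; otherwise the arguments coincide.
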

 \begin{proof} (1)
 Note that $P_0$ is the projection from $L^2(\mathcal M)$ onto $H^2$ and $P_0\in (L(\mathfrak D))^{\prime}$. Then for any
 $A\in P_0(L(\mathfrak D))^{\prime}P_0$,   $\Psi(P_0AP_0)\in R(\mathcal M)$. We may define $\psi(P_0AP_0)=P_0\Psi(P_0AP_0)|_{H^2}\in\{t_B: B\in\mathcal M\}$.
 This means that $\psi $ is a positive linear map of norm 1 with $\psi(I)=I$ by Theorem \ref{t415}.
 Moreover, for any $B\in\mathcal M$, $t_BP_0=P_0R_BP_0\in P_0(L(\mathfrak D))^{\prime}P_0.$
 Then
  $$\psi(P_0R_BP_0)=P_0\Psi(P_0R_BP_0)|_{H^2}=P_0\Psi(R_BP_0)|_{H^2}=P_0\Psi(R_B)|_{H^2}=P_0R_B|_{H^2}=t_B$$ by Theorem \ref{t415} again.

  (2) If $A\in P_0(L(\mathfrak D))^{\prime}P_0\cap \mbox{alg}\{P_n: n\geq1\}$, then $A=P_0AP_0\in(L(\mathfrak D))^{\prime}$  and hence $A\in (L(\mathfrak D))^{\prime}\cap \mbox{alg}\{P_n: n\in \mathbb Z\}$. Thus $\Psi(A)=R_C\in R(\mathfrak A)$ for some $C\in\mathfrak A$  by Theorem \ref{t415}. It follows that  $\psi(A)=P_0\Psi(A)|_{H^2}=t_{C}\in\mathfrak R$.

 (3)  If    $A\in P_0(L(\mathfrak D))^{\prime}P_0\cap \mbox{alg}\{P_n: n\geq1\}$, then   $\Psi(A)=R_C$ for some $C\in \mathfrak A$ by (2).  In particular, $\Psi(A)P_0=P_0\Psi(A)P_0$.
    Now for any  $B\in\mathfrak A$,
  \begin{align*}
  \psi(t_BA)&=P_0\Psi(P_0R_BP_0AP_0)|_{H^2}=P_0\Psi(R_BP_0AP_0)|_{H^2}\\
  &=P_0R_B\Psi(A)|_{H^2}=(P_0R_B\Psi(A)P_0)|_{H^2}\\
  &=P_0R_BP_0\Psi(A)P_0|_{H^2}
  =t_B\psi(A).
  \end{align*}
 \end{proof}
 \begin{proposition}\label{p52} Let $\mathfrak A$ be a type  1  subdiagonal algebra  with $(1,1)$-form and $\{A_k:1\leq k\leq N\}\subseteq \mathfrak A$. If there exists a positive number $\varepsilon$ such that
\begin{equation}\label{f51}
\sum\limits_{k=1}^N\|t_{A_k}^*x\|^2\geq \varepsilon^2 \|x\|^2, \forall x\in H^2,
\end{equation}  then there exist operators $\{B_k:1\leq k\leq N\}\subseteq\mathfrak A$
 such that
 \begin{equation*}
 \sum\limits_{k=1}^NB_kA_k=I
 \end{equation*} with $\|B_k\|\leq 4N\alpha \varepsilon^{-3}$.
 \end{proposition}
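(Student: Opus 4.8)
The plan is to reduce to the adjoint algebra $\mathfrak A^*$ and apply the first corollary following Theorem~\ref{t1} (the one stated with left Toeplitz operators): its conclusion, an identity of the shape $\sum_k A_k^*G_k=I$, will produce $\sum_k B_kA_k=I$ upon taking adjoints. Recall that $\mathfrak A^*$ is again a type $1$ (hence maximal) subdiagonal algebra with respect to $\Phi$, with noncommutative $H^2$-space $(H^2)^*$ and associated projection $\widetilde P_0=I-P_1$, and that $(H^2)^*$ is right $\mathfrak A^*$-invariant. Moreover every type $1$ subdiagonal algebra has the universal factorization property: since $\bigcap_{n\ge1}\mathfrak A_0^{\,n}=\{0\}$, the ideal $\mathfrak A_0$ contains no nonzero idempotent, so \cite[Theorem~3.8]{js} applies to $\mathfrak A$, and likewise to $\mathfrak A^*$.

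Fix $\{A_k:1\le k\le N\}\subseteq\mathfrak A$ and consider $\{A_k^*\}\subseteq\mathfrak A^*$. Since $t_{A_k}^*=t_{A_k^*}$ and $t_{A_k^*}x=P_0(xA_k^*)$ for $x\in H^2$, the hypothesis (\ref{f51}) reads $\sum_{k=1}^N\|P_0(xA_k^*)\|^2\ge\varepsilon^2\|x\|^2$ for all $x\in H^2$. Let $J\colon z\mapsto z^*$ be the canonical antiunitary involution of $L^2(\mathcal M)$; then $JH^2=(H^2)^*$, $JP_0J=\widetilde P_0$ and $J(xA_k^*)=A_kx^*$, so applying $J$ gives $\sum_{k=1}^N\|\widetilde P_0(A_ky)\|^2\ge\varepsilon^2\|y\|^2$ for all $y\in(H^2)^*$. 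Writing $\widetilde T_{A_k}$ for the left Toeplitz operator of $\mathfrak A^*$ with symbol $A_k$ (so $\widetilde T_{A_k}y=\widetilde P_0(A_ky)$) and using $(\widetilde T_{A_k^*})^*=\widetilde T_{A_k}$, this is exactly the condition $\sum_{k=1}^N\|(\widetilde T_{A_k^*})^*y\|^2\ge\varepsilon^2\|y\|^2$ for $y$ in the $H^2$-space of $\mathfrak A^*$, i.e.\ the second hypothesis of the corollary for the family $\{A_k^*\}$.

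For the first hypothesis one needs $\sum_{k=1}^N\|A_kx\|^2\ge\varepsilon^2\|x\|^2$ for all $x\in L^2(\mathcal M)$ (note $(A_k^*)^*=A_k$), equivalently $C:=\sum_{k=1}^N A_k^*A_k\ge\varepsilon^2 I$ in $\mathcal M$. From $\|P_0(xA_k^*)\|\le\|xA_k^*\|$ and the identity $\sum_k\|xA_k^*\|^2=\mbox{tr}(x^*xC)$ we get $\mbox{tr}(x^*xC)\ge\varepsilon^2\,\mbox{tr}(x^*x)$ for every $x\in H^2$. Taking $x=yh_0^{1/2}$ with $y$ invertible in $\mathfrak A$ — such $x$ belong to $H^2=[\mathfrak Ah_0^{1/2}]$ — this becomes $\mbox{tr}\!\big((h_0^{1/2}Ch_0^{1/2}-\varepsilon^2 h_0)\,y^*y\big)\ge0$. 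By the universal factorization property of $\mathfrak A$, the elements $y^*y$ with $y\in\mathfrak A$ invertible run over all positive invertible elements of $\mathcal M$ (factor $g^{1/2}=Uy$ with $U$ unitary and $y\in\mathfrak A$ invertible, so $g=y^*y$); hence, by norm density of these in $\mathcal M_+$, $\mbox{tr}\!\big((h_0^{1/2}Ch_0^{1/2}-\varepsilon^2 h_0)\,g\big)\ge0$ for all $g\in\mathcal M_+$, and positivity of the $L^1(\mathcal M)$--$\mathcal M$ duality forces $h_0^{1/2}(C-\varepsilon^2 I)h_0^{1/2}\ge0$ in $L^1(\mathcal M)$. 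Since $\varphi$ is faithful, $h_0$ is injective, so $C\ge\varepsilon^2 I$.

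Both hypotheses now hold, so the corollary to Theorem~\ref{t1} yields $\{G_k:1\le k\le N\}\subseteq\mathfrak A^*$ with $\sum_{k=1}^N A_k^*G_k=I$ and $\|G_k\|\le 4N\alpha\varepsilon^{-3}$ (here $\max_k\|A_k^*\|=\alpha$); taking adjoints and setting $B_k:=G_k^*\in\mathfrak A$ gives $\sum_{k=1}^N B_kA_k=I$ with $\|B_k\|\le 4N\alpha\varepsilon^{-3}$. I expect the only delicate step to be the passage from the inequality on $H^2$ to the operator inequality $C\ge\varepsilon^2 I$ on all of $\mathcal M$: the hypothesis only controls a compression to $H^2$, and recovering the full inequality is where the universal factorization property is genuinely used — to produce enough test vectors $yh_0^{1/2}$ — together with the faithfulness of $\varphi$ to strip off the factors $h_0^{1/2}$.
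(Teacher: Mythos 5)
Your reduction to $\mathfrak A^*$ is internally coherent: the antiunitary $J\colon z\mapsto z^*$ does intertwine $P_0$ with $\widetilde P_0$ and converts the hypothesis (\ref{f51}) into the Toeplitz condition for $\{A_k^*\}\subseteq\mathfrak A^*$, the adjoint trick recovers $\sum_kB_kA_k=I$ from $\sum_kA_k^*G_k=I$, and your test-vector argument with $x=yh_0^{1/2}$ ($y$ invertible in $\mathfrak A$) for the global bound $\sum_kA_k^*A_k\geq\varepsilon^2I$ is a nice observation. But the whole argument hinges on the claim that a type 1 subdiagonal algebra with the $(1,1)$-form has the universal factorization property, and this is exactly where there is a genuine gap. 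Both the corollary to Theorem \ref{t1} that you invoke and your own test-vector construction (you need every positive invertible $g$ to be of the form $y^*y$ with $y\in\mathfrak A$ invertible) require this property, and you justify it only by ``$\bigcap_n\mathfrak A_0^n=\{0\}$ implies $\mathfrak A_0$ has no nonzero idempotent, so \cite[Theorem~3.8]{js} applies.'' In the paper that citation is used only for nest subalgebras $\mbox{alg}_{\mathcal M}\mathcal N$ with a countable atomic nest (proof of Theorem \ref{t2}), after the no-idempotent condition is checked there; nothing in the paper asserts, and you give no argument, that the implication ``no nonzero idempotent in $\mathfrak A_0$ $\Rightarrow$ universal factorization'' holds for arbitrary maximal (or type 1) subdiagonal algebras. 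Indeed the condition $\bigcap_n\mathfrak A_0^n=\{0\}$ makes the no-idempotent hypothesis automatic for every type 1 algebra, so your reading of \cite[Theorem~3.8]{js} would make every type 1 subdiagonal algebra have the universal factorization property --- in which case Sections 4 and 5 of the paper (the periodic-flow realization, the expectation $\Psi$ of Theorem \ref{t415}, and Proposition \ref{p51}) would be superfluous for the Corona variant. The structure of the paper is strong evidence that no such general factorization theorem is available: universal factorization is treated throughout as a special hypothesis, established only for the nest subalgebras of Section 3.

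This is precisely the obstacle the paper's proof is designed to circumvent. There, one never factors inside $\mathfrak A$ itself: the hypothesis (\ref{f51}) is transported to the operators $t_{A_k^*}$ inside the auxiliary von Neumann algebra $\mathcal N=P_0(L(\mathfrak D))^{\prime}P_0$, where $\{Q_n=I-P_n\}$ is a countable nest and $\mbox{alg}_{\mathcal N}\{Q_n\}$ \emph{is} a nest subalgebra, so Theorem \ref{t2} (whose factorization input is legitimately supplied by \cite[Theorem~3.8]{js}) applies; the key computation is $P_n=\sum_k T_{U_{n,k}}T_{U_{n,k}}^*$ together with $T_At_B=t_BT_A$, which converts (\ref{f51}) into the nest conditions at every $Q_n$. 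The resulting solution lives in $\mathcal N$, not in $\mathfrak R$, and the $(1,1)$-form hypothesis enters through the conditional expectation $\psi$ of Proposition \ref{p51} (built from Theorem \ref{t415}), which compresses that solution back to right analytic Toeplitz operators $t_{B_k}$ with $B_k\in\mathfrak A$. Unless you can independently prove that every type 1 subdiagonal algebra with the $(1,1)$-form has the universal factorization property (a statement not contained in the paper and, as far as the paper indicates, not known), your route does not close; the remaining steps of your argument are fine but cannot compensate for this missing ingredient.
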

 \begin{proof}
 Let $\mathcal N$  be the von Neumann algebra   $P_0L(\mathfrak D))^{\prime}P_0$ on $H^2$. Since $P_n\in\mathcal N$ for all $n\geq1$, $Q_n=I-P_n\in\mathcal N$ and $\{Q_n: 0\leq n\leq+\infty\}$ is a nest in $\mathcal N$, where $Q_0=0$ and $Q_{+\infty}=I$.
 Thus $\mbox{alg}_{\mathcal N}\{Q_n: n\geq0\}$ is a nest subalgebra in $\mathcal N$ with $t_A^*=t_{A^*}\in \mathcal N\cap\mbox{alg}\{Q_n: n\geq0\}$ for any $A\in\mathfrak A$. We next claim that operators $\{t_{A_k^*}:1\leq k\leq N\}$ satisfy  conditions of Theorem \ref{t2}. Note that $I-Q_n=P_n$ for $n\geq1$ and $I-Q_0=I$.  Then $\sum\limits_{k=1}^N\|(I-Q_0)t_{A_k^*}x\|^2 =\sum\limits_{k=1}^N\|t_{A_k^*}x\|^2\geq \varepsilon^2 \|x\|^2$, $\forall x\in H^2$. Assume that $n\geq1$.  Note that for any partial isometry $V\in\mathfrak A$ such that $V^*V\in\mathfrak D$, $T_V$ is a partial isometry such that $T_V^*T_V=T_{V^*V}$. Now
 $$P_n(H^2)=\oplus_{k\geq1}^{col}U_{n,k}H^2=\oplus_{k\geq1}^{col}T_{U_{n,k}}H^2.$$
 Then $$P_nH^2=\sum_{k\geq 1}T_{U_{n,k}}T_{U_{n,k}}^*T_{U_{n,k}}H^2=\left(\sum_{k\geq 1}T_{U_{n,k}}T_{U_{n,k}}^*\right) H^2.$$
  Thus $P_n=\sum_{k\geq 1}T_{U_{n,k}}T_{U_{n,k}}^*$. On the other hand, $T_At_B=t_BT_A$ for all $A,B\in \mathfrak A$ by \cite[Theorem 2.3]{ji2}. We then have
 \begin{align}\label{f52}
 \ \ \ \ &\|(I-Q_n)t_A^*x\|^2=\|P_nt_A^*x\|^2=\|(\sum_{k\geq1}T_{U_{n,k}}T_{U_{n,k}}^*)t_A^*x\|^2\\
 \nonumber &=\sum_{k\geq1}\|T_{U_{n,k}}^*t_A^*x\|^2
 =\sum_{k\geq1}\|t_A^*T_{U_{n,k}}^*x\|^2.
  \end{align}
  It follows from  formulae (\ref{f51})  and (\ref{f52}) that
  \begin{align*}
  &\ \ \ \ \sum_{j=1}^N\|(I-Q_n)t_{A_j}^*x\|^2=\sum_{j=1}^N\sum_{k\geq1}\|t_{A_j}^*T_{U_{n,k}}^*x\|^2\\
  &=\sum_{k\geq1}\sum_{j=1}^N\|t_{A_j}^*T_{U_{n,k}}^*x\|^2\\
  &\geq \varepsilon^2 \sum_{k\geq1} \|T_{U_{n,k}}^*x\|^2\\
  &=\varepsilon^2 \sum_{k\geq1} \|T_{U_{n,k}}T_{U_{n,k}}^*x\|^2\\
  &=\varepsilon^2 \|(\sum_{k\geq1}T_{U_{n,k}}T_{U_{n,k}}^*)x\|^2\\
  &=\varepsilon^2\|P_nx\|^2=\varepsilon^2\|(I-Q_n)x\|^2.
  \end{align*}
  By Theorem \ref{t2}, there exist operators $\{C_k:1\leq k\leq N\}\subseteq  \mbox{alg}_{\mathcal N}\{Q_n: n\geq0\}$   such that
 $\sum_{k\geq1}C_kt_{A_k}^*=I$. Hence $\sum_{k\geq1}t_{A_k}C_k^*=I$. Note that $\{C_k^*:1\leq k\leq N\}\subseteq  \mbox{alg}_{\mathcal N}\{P_n: n\geq0\}$. By Proposition \ref{p51}, $\psi(\sum_{k\geq1}t_{A_k}C_k^*)=\sum_{k\geq1}t_{A_k}\psi(C_k^*)=I$. By Proposition \ref{p51} again,
 $\psi(C_k^*)=t_{B_k}\in \mathfrak R$ For some $B_k\in \mathfrak A$  for any $1\leq k\leq N$. Therefore,
 $$\sum_{k\geq1}t_{A_k}\psi(C_k^*)=\sum_{k\geq1}t_{A_k}t_{B_k}=t_{\sum_{k\geq1}B_kA_k}=I.$$ Consequently,
 $\sum_{k\geq1}B_kA_k=I$.  Since $\|T_A\|=\|t_A\|=\|A\|$  for any $A\in\mathcal M$ by \cite[Theorem 4.3]{pru},  we easily have $\|B_k\|\leq 4N\alpha \epsilon^{-3}$ for any $1\leq k\leq N$.
 \end{proof}
 Combine Remark \ref{r35},  Theorem \ref{t413} with  Proposition  \ref{p52}, we summarize the following noncommutative operator-theoretic variant of the Corona theorem for a type 1 subdiagonal algebra.
\begin{theorem}\label{t52} Let $\mathfrak A$ be a type  1  subdiagonal algebra     and $\{A_k:1\leq k\leq N\}\subseteq \mathfrak A$. If there exists a positive number $\varepsilon$ such that
\begin{equation}\label{f53}
\sum\limits_{k=1}^N\|t_{A_k}^*x\|^2\geq \varepsilon^2 \|x\|^2, \forall x\in H^2,
\end{equation}  then there exist operators $\{B_k:1\leq k\leq N\}\subseteq\mathfrak A$
 such that
 \begin{equation*}
 \sum\limits_{k=1}^NB_kA_k=I
 \end{equation*}  with $\|B_k\|\leq 4N\alpha \varepsilon^{-3}$.
 \end{theorem}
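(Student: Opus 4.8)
The strategy is to reduce the general type 1 case to the $(1,1)$-form case already settled in Proposition~\ref{p52}, using the form decomposition of Theorem~\ref{t413}. First I would invoke Theorem~\ref{t413} to obtain pairwise orthogonal central projections $\{C_{ij}:i,j=0,1\}\subseteq\mathcal Z(\mathcal M)\cap\mathfrak D$ with $\sum C_{ij}=I$, such that each nonzero $C_{ij}\mathfrak A\subseteq C_{ij}\mathcal M$ is a type 1 subdiagonal algebra with the $(i,j)$-form. Since the $C_{ij}$ are central and lie in $\mathfrak D$, they commute with everything in $\mathcal M$ and reduce $L^2(\mathcal M)$, $H^2$, and the Toeplitz operators; in particular $C_{ij}H^2 = H^2(C_{ij}\mathfrak A)$ and the right Toeplitz operator $t_{A_k}$ restricts to $t_{C_{ij}A_k}$ on $C_{ij}H^2$. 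The hypothesis \eqref{f53} then passes to each summand: for $x\in C_{ij}H^2$ we get $\sum_{k=1}^N\|t_{C_{ij}A_k}^*x\|^2\geq\varepsilon^2\|x\|^2$, with the same $\varepsilon$ and the same $\alpha$ (or a smaller one).

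Next I would solve the interpolation problem on each block. If $C_{11}\neq 0$, then $C_{11}\mathfrak A$ has the $(1,1)$-form, and Proposition~\ref{p52} directly yields $\{B_k^{(11)}\}\subseteq C_{11}\mathfrak A$ with $\sum_k B_k^{(11)}(C_{11}A_k)=C_{11}$ and $\|B_k^{(11)}\|\leq 4N\alpha\varepsilon^{-3}$. For $C_{01}$, $C_{10}$, $C_{00}$, Proposition~\ref{p411} and Remark~\ref{r414}(2),(3) show that $C_{ij}\mathfrak A$ is a nest subalgebra $\mathrm{alg}_{C_{ij}\mathcal M}\mathcal N_{ij}$ for an appropriate nest $\mathcal N_{ij}=\{Q_n\}$. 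On such a block one translates the Toeplitz hypothesis into the form required by Corollary~\ref{c29}: exactly as in the proof of Proposition~\ref{p52}, using $P_n=\sum_{k\geq1}T_{U_{n,k}}T_{U_{n,k}}^*$, the commutation $T_At_B=t_BT_A$, and $\|T_A\|=\|t_A\|=\|A\|$, one derives $\sum_{k=1}^N\|T_{C_{ij}A_k}^*x\|^2\geq\varepsilon^2\|x\|^2$ for $x\in C_{ij}H^2$. Corollary~\ref{c29} (together with Remark~\ref{r35} to get the sum in the order $\sum B_kA_k=I$) then produces $\{B_k^{(ij)}\}\subseteq C_{ij}\mathfrak A$ with $\sum_k B_k^{(ij)}(C_{ij}A_k)=C_{ij}$ and the same norm bound. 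Finally I would assemble $B_k=\sum_{ij}B_k^{(ij)}\in\mathfrak A$; since the $C_{ij}$ are central and orthogonal summing to $I$, $\sum_k B_kA_k=\sum_{ij}\sum_k B_k^{(ij)}C_{ij}A_k=\sum_{ij}C_{ij}=I$, and $\|B_k\|=\max_{ij}\|B_k^{(ij)}\|\leq 4N\alpha\varepsilon^{-3}$.

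The main obstacle I anticipate is not any single hard estimate but the bookkeeping of the reduction: one must check carefully that the central cutdowns $C_{ij}$ are compatible with all the structures involved — that $C_{ij}P_0=P_0C_{ij}$ so that $C_{ij}H^2$ is again the $H^2$-space of $C_{ij}\mathfrak A$, that the projections $F_n$ and the partial isometries $U_{n,k}$ for $C_{ij}\mathfrak A$ are the cutdowns of the original ones (this is implicit in the proof of Theorem~\ref{t413}), and that the inequality \eqref{f53} genuinely localizes block by block with a uniform constant. A secondary point is handling possibly zero blocks $C_{ij}=0$, which is trivial but must be stated. Once these compatibility facts are in place, each block is dispatched by an already-proved theorem, and the norm bound, being the same $4N\alpha\varepsilon^{-3}$ on every block, survives the reassembly.
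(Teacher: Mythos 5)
Your overall strategy is exactly the paper's: decompose via Theorem \ref{t413} into central blocks $C_{ij}$, note the hypothesis (\ref{f53}) localizes to each block, handle $C_{11}$ by Proposition \ref{p52}, handle the remaining blocks as nest subalgebras, and reassemble with the norm bound. The $(1,1)$ block, the assembly, and the bookkeeping points you list are all fine and match the paper.

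However, your treatment of the nest blocks contains a step that does not work as stated. You propose to derive the left-Toeplitz inequality $\sum_{k}\|T_{C_{ij}A_k}^*x\|^2\geq\varepsilon^2\|x\|^2$ from the right-Toeplitz hypothesis ``exactly as in the proof of Proposition \ref{p52}.'' The proof of Proposition \ref{p52} does no such conversion: the identities $P_n=\sum_k T_{U_{n,k}}T_{U_{n,k}}^*$ and $T_At_B=t_BT_A$ are used there only to verify the nest-type inequality for the operators $t_{A_k}^*$ inside the nest subalgebra of $P_0(L(\mathfrak D))^{\prime}P_0$; they do not transform bounds on right Toeplitz operators into bounds on left Toeplitz operators, and in the noncommutative setting no such implication is available (left and right actions on $H^2$ are genuinely different). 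Moreover, the detour is also logically misdirected: if you did have the $T$-inequality, Corollary \ref{c29} would yield $\sum_k A_kB_k=I$, the wrong order, and Remark \ref{r35} is not a device to switch the order afterwards --- it is precisely the statement that the $t_{A_k}^*$-hypothesis itself gives $\sum_k B_kA_k=I$ for a nest subalgebra. The correct (and the paper's) step is therefore to apply Remark \ref{r35} directly to the localized hypothesis $\sum_k\|t_{C_{ij}A_k}^*x\|^2\geq\varepsilon^2\|x\|^2$, $x\in C_{ij}H^2$, on each block with $i=0$ or $j=0$ (these are nest subalgebras by Remark \ref{r414} and Proposition \ref{p411}); no conversion to left Toeplitz operators is needed. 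With that replacement your argument coincides with the paper's proof.
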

 \begin{proof}
Let $C_{ij}\in\mathcal Z(\mathcal M)\cap \mathfrak D$ be projections in Theorem \ref{t413}.  Then we have $H^2=\oplus_{i,j=0}^1C_{ij}H^2$ and
$t_A=\oplus_{i,j=0}^1C_{ij}t_A$ for any $A\in\mathcal M$. In particular,
\begin{equation*}
\sum\limits_{k=1}^N\|C_{ij}t_{A_k}^*x\|^2\geq \varepsilon^2 \|x\|^2, \forall x\in C_{ij}H^2.
\end{equation*}
Since $C_{ij}\mathfrak A$ is a nest subalgebra of $C_{ij}\mathcal M$ with a nest  which is order isomorphic to a sublattice of $\mathbb Z\cup\{-\infty,+\infty\}$  when either $i=0$ or  $j=0$ by Remark \ref{r414}, it follows  from Remark \ref{r35} that  there exist
$\{B_k^{ij}:1\leq k\leq N\}\subseteq C_{ij}\mathfrak A$ such that $\sum_{k=1}^NB_k^{ij}C_{ij}A_k=C_{ij}$. Now $C_{11}\mathfrak A$ is a type 1 subdiagonal algebra with  the $(1,1)$-form. Thus by Proposition \ref{p52}, $\sum_{k=1}^NB_k^{11}C_{11}A_k=C_{11}$ for some $\{B_k^{11}:1\leq k\leq N\}\subseteq C_{11}\mathfrak A$. Let $B_k=\sum_{i,j=0}^1B_k^{ij}$ for any  $1\leq k\leq N$. We have $\{B_k:1\leq k\leq N\}\subseteq \mathfrak A$
such that $\sum_{k=1}^NB_kA_k=I.$  The  inequality   $\|B_k\|\leq 4N\alpha \varepsilon^{-3}$ can be  easily derived.
 \end{proof}
 \begin{remark}\label{r5} If we consider an expectation from $(R(\mathfrak D))^{\prime}$ onto $L(\mathcal M)$ as in Remark \ref{r416}, then there exists a positive linear projection   $\gamma$ from $P_0(R(\mathfrak D))^{\prime}P_0$ onto the space  $\{T_A: A\in \mathcal M\}$  of all  left  Toeplitz  operators  with those properties as in Proposition \ref{p51}.
Therefore, if we replace right Toeplitz operators $\{t_{A_k^*}: 1\leq k\leq n\}$  by left Toeplitz operators $\{T_{A_k^*}: 1\leq k\leq n\}$ in (\ref{f53}),    then there exist operators $\{B_k:1\leq k\leq N\}\subseteq\mathfrak A$
 such that
 $ \sum\limits_{k=1}^NA_kB_k=I.$

 \end{remark}

\bibliographystyle{amsplain}

\end{document}